

\documentclass[EJP]{ejpecp} 






\def\beq{ \begin{equation} }
\def\eeq{ \end{equation} }
\def\mn{\medskip\noindent}

\def\ep{\epsilon}

\def\square{\vcenter{\vbox{\hrule height .4pt
  \hbox{\vrule width .4pt height 5pt \kern 5pt
        \vrule width .4pt} \hrule height .4pt}}}

\def\RR{\mathbb{R}}

\def\FF{\mathcal{F}}

\def\GG{\mathcal{G}}
\def\KK{\mathcal{K}}
\def\EE{\mathcal{E}}
\def\II{\mathcal{I}}

\def\AA{\mathcal{A}}

\def\hh{\hspace{1ex}}
\def\ep{\varepsilon}

\def\hh{\hspace{1ex}}

\numberwithin{equation}{section}

\SHORTTITLE{Degree centrality}

\TITLE{Degree centrality and root finding in growing random networks}



\AUTHORS{%
  Sayan Banerjee\footnote{Department of Statistics and Operations Research, Hanes Hall, University of North Carolina, Chapel Hill, NC 27599.
    \EMAIL{sayan@email.unc.edu}}
  \and 
  Xiangying Huang \footnote{Department of Statistics and Operations Research, Hanes Hall, University of North Carolina, Chapel Hill, NC 27599. \EMAIL{zoehuang@unc.edu} }}



\KEYWORDS{attachment functions, continuous time branching processes, network centrality measures, degree centrality, persistence, root finding algorithms} 

\AMSSUBJ{05C82, 60J85, 60J28} 

\SUBMITTED{February 21, 2022} 
\ACCEPTED{March 7, 2023} 




\VOLUME{0}
\YEAR{2023}
\PAPERNUM{0}
\DOI{10.1214/YY-TN}


\ABSTRACT{We consider growing random networks $\{\GG_n\}_{n \ge 1}$ where, at each time, a new vertex attaches itself to a collection of existing vertices via a fixed number $m \ge 1$ of edges, with probability proportional to a function $f$ (called attachment function) of their degree. It was shown in \cite{BBpersistence} that such network models exhibit two distinct regimes: (i) the persistent regime, corresponding to $\sum_{i=1}^{\infty}f(i)^{-2} < \infty$, where the top $K$ maximal degree vertices fixate over time for any given $K$, and (ii) the non-persistent regime, with $\sum_{i=1}^{\infty}f(i)^{-2} = \infty$, where the identities of these vertices keep changing infinitely often over time. In this article, we develop root finding algorithms using the empirical degree structure and local network information based on a snapshot of such a network at some large time. In the persistent regime, the algorithm is purely based on degree centrality, that is, for a given error tolerance $\ep \in (0,1)$, there exists $K_{\ep} \in \mathbb{N}$ such that for any $n \ge 1$, the confidence set for the root in $\GG_n$, which contains the root with probability at least $1 - \ep$, consists of the top $K_{\ep}$ maximal degree vertices. In particular, the size of the confidence set is stable in the network size. Upper and lower bounds on $K_{\ep}$ are explicitly characterized in terms of the error tolerance $\ep$ and the attachment function $f$. In the non-persistent regime, for an appropriate choice of $r_n \rightarrow \infty$ at a rate much smaller than the diameter of the network, the neighborhood of radius $r_n$ around the maximal degree vertex is shown to contain the root with high probability, and a size estimate for this set is obtained. It is shown that, when $f(k) = k^{\alpha}, k \ge 1,$ for any $\alpha \in (0,1/2]$, this size grows at a smaller rate than any positive power of the network size.}




\begin{document}



\section{Introduction}

The last two decades have seen an explosion in the application of temporal networks in diverse scenarios, including modeling and analyzing connectivity in groups of individuals, spread of epidemics, opinions and rumors, just to name a few. Owing to their large size, it is often infeasible to track these networks over time; instead, one observes a snapshot of the network at some large time and asks about its nascent states. In this article, we are interested in growing random networks where an incoming vertex connects to a collection of existing vertices via a fixed number of edges, with probability proportional to a function $f$ of their degree, called the attachment function. We ask the following: can one detect the root (first vertex in the network) to a quantifiable degree of accuracy based on observing a single snapshot of this network process? This question connects to the broad field of \emph{network archaeology} \cite{navlakha2011network} where one is interested in reconstructing the past of a network from its present, including connectivity structure, central nodes, etc. Although related questions have been studied in a more applied setting, a rigorous mathematical analysis has only recently been initiated (see Section \ref{cenhis}). Most mathematical works focus on the uniform and linear preferential attachment networks (when $f$ is respectively constant and linearly increasing) owing to their analytical tractability. However, from a modeling perspective, such assumptions seem rather ad hoc and ideally the attachment function should be inferred based on the network data at hand. Hence, it is important to address the above questions in a more general setting with minimal assumptions on the attachment function. In this article, we ask to what extent the information on the empirical degree structure of the network snapshot can aid in root detection. This is  part of our continuing program to understand network archaeology for growing random networks driven by general attachment functions (see also \cite{banerjee2018fluctuation,BBpersistence,jordan}).

\subsection{Model definition}\label{modeldef}
We define a slightly more general version of the class of networks studied in this paper.
Fix an attachment function $f: \mathbb{N} \rightarrow (0,\infty)$ and an $\mathbb{N}$-valued (possibly random) attachment sequence $\{m_i:i\geq 1\}$. We will construct a sequence of random graphs $\{\GG_n:n\geq 1\}$ as follows:

\mn
(i) Let $\GG_1$ be the graph with two vertices $\{v_0, v_1\}$ and $m_1$  edges between them, where $v_0$ is also referred to as the root.

\mn
(ii) Fix $n>1$. Given we have obtained $\GG_{n-1}$, $\GG_n$ is constructed from $\GG_{n-1}$ by adding one vertex $v_n$ and $m_n$ directed edges, each with one end connected to $v_n$, and the other ends of these edges are connected sequentially to one of the existing vertices $\{v_i: 0\leq i\leq n-1\}$ in $\GG_{n-1}$ according to the following rule. For $1 \leq j \leq m_n$, the $j$-th edge has its other end connected to $v_i$ $(0 \leq i \leq n-1)$ with probability proportional to $f\text{(degree of $v_i$)}$ (the degree is computed before the connection is made).

\medskip
We will find it convenient to index the graph sequence by the number of attached edges (with both ends already connected to respective vertices).
Let $s_n :=\sum_{i=1}^n m_i$ for $n\geq 1$ and $s_0=0$. Mathematically, we construct the following (directed) random graph sequence $\{ \GG_k^*: k\geq s_1\}$ where $\GG_k^*$ has $k$ attached edges. For any $n\geq 2$ and any $s_{n-1} <k\leq s_n$, $\GG_k^*$ has $n+1$ vertices. For $l \geq 0$, let $d_0(l)$ denote the degree of the root after the $l$-th edge is attached. For $ i \geq 1$, denote by $d_i(l)$, $ l > s_{i-1}$, the degree of the $(i + 1)$-th vertex (i.e. vertex $v_i$) after the $l$-th edge is attached and set $d_i(l) = 0$ for all $l \leq s_{i-1}$. 
For $s_{n-1} < k \leq s_n$, an edge $e_k$ is added to the graph, with one end attached to $v_n$, and for $0 \leq i \leq n-1$,
\begin{equation}\label{attprobab}
P(\text{other end of $e_k$ attached to $v_i$} \, | \, \GG_j^*, j\leq k-1) =\frac{f(d_i(k-1))}{\sum_{j=0}^{n-1}f(d_j(k-1))}.
\end{equation}
Thus, $\{\GG_n: n\geq 1\}$ has the same law as $\{\GG_{s_n}^* :n\geq 1\}$. We will denote the vertex set of $\GG_n$ by $V(\mathcal{G}_n)$.

Two important special cases are: (i) $f(k) = k+ \beta$ with $k \ge 1$ for some $\beta \ge 0$, which is known as the \emph{linear preferential attachment} model, and (ii) $f \equiv 1$, which is called the \emph{uniform attachment} model.

\subsection{Centrality measures, persistence and root finding algorithms}\label{cenhis}
Let $\mathbb{G}$ be a collection of (possibly labeled) graphs. We write $\mathbb{G}^{\circ}$ for the corresponding collection of unlabeled graphs and let $V(\mathbb{G}^\circ)$ denote its vertex set. A centrality measure $\Psi: V(\mathbb{G}^{\circ}) \rightarrow \mathbb{R}_+$ gives each vertex in the graph a `score' based on some notion of `centrality' in an appropriate geometry of the unlabeled graph. In our setup, $\mathbb{G} = \{\GG_n : n \ge 1\}$ is a growing sequence of random graphs and our goal is to recover the root from an unlabeled copy of $\GG_n$ for some large $n$. The idea is to choose a centrality measure $\Psi$ that \emph{correlates strongly with the age} of the vertex. Then, for a given error tolerance $\ep \in (0,1)$, we can choose the $K_\ep(n)$ most central vertices (as determined by $\Psi$) in $\GG_n$ for an appropriate $K_\ep(n) \in \mathbb{N}$ and ensure that the root of $\GG_n$ is contained in this set with probability at least $1-\ep$.

There are several existing notions of centrality, some of which we now discuss.
\begin{itemize}
\item[(i)] \emph{Degree centrality} \cite{SPdegree,galashin2013existence,BBpersistence} quantifies the centrality of a vertex in a general network simply based on how large its degree is. For dynamic networks, the older vertices exist in the network for a longer time and one expects them to have a higher degree. 
\item[(ii)] \emph{Jordan (or centroid) centrality} \cite{bubeck2017findingup,jog2018persistence,jog2016analysis,jordan} quantifies centrality in trees by looking at the size of the maximal subtree rooted at each vertex. The smaller this size, the more comparable the sizes are of all the subtrees rooted at this vertex, and the more central this vertex is.
\item[(iii)] \emph{Rumor Centrality} \cite{shah2011rumors,shah2012rumor,khim2016confidence} was originally developed in connection to finding the most likely source of rumor spread or infection spread for a susceptible-infected (SI) epidemic model on a background random graph. This measure has been successfully used in devising root finding algorithms for the uniform attachment model in \cite{bubeck2017findingup}.
\end{itemize}
There are several other notions of centrality \cite{freeman1977set,newman2005measure,boldi2014axioms}, but only the ones above have so far been rigorously mathematically analyzed in connection with root detection. Recently, \cite{bubeck2015influence,curien2015scaling} and \cite{bubeck2017trees,lugosi2019finding,devroye2018discovery} have respectively used degree and Jordan centrality in reconstruction of seeds (initial graph from which the network starts) from a single snapshot of the network at a large time in linear preferential attachment and uniform attachment models.

A very desirable property of centrality measures is \emph{persistence} as defined below. 

\begin{definition}
 Fix $K \geq 1$ and a network centrality measure $\Psi$. For an evolving network sequence $\{\GG_n : n \geq  1\}$, say that the sequence is $(\Psi, K)$ persistent if there exists $n^*<\infty$ a.s. such that
for all $n \geq n^*$ the optimal $K$ vertices $(v^{(n)}_{1,\Psi},v^{(n)}_{2,\Psi} ,\dots, v^{(n)}_{K,\Psi} )$, as measured by the centrality measure $\Psi$, remain the same and further the relative ordering amongst these $K$ optimal vertices remain the same.
\end{definition} 

Persistence ensures that the identities of the most central vertices fixate over time. This implies that one can use the centrality measure $\Psi$ to construct confidence sets for the root, with given error tolerance $\ep$, whose size only depend on $\ep$ and not on the network size, that is, $K_{\ep}(n) \equiv K_{\ep}$. This stability with respect to network size makes the confidence set robust for large networks.

Persistence of the degree centrality measure was first shown for the linear preferential attachment model in \cite{galashin2013existence}.  The phenomenon of persistence for the general class of models described in Section \ref{modeldef} was first investigated in \cite{BBpersistence}. Building on the techniques of \cite{SPdegree} (which investigated persistence for a related dynamic random graph model), they showed the existence of two distinct regimes: (i) the \emph{persistent regime} when $\sum_{i=1}^{\infty}\frac{1}{f(i)^2}<\infty$, and (ii) the \emph{non-persistent regime} when $\sum_{i=1}^{\infty}\frac{1}{f(i)^2} = \infty$. We now summarize some of the results of \cite{BBpersistence}. In the following, for any $v \in \GG_n$, $\Psi(v)$ is the degree of $v$ in $\GG_n$. Moreover, $m_i$ can be random for the results stated below.

\begin{itemize}
\item[(i)] \emph{Persistent regime} : Assume $f$ is non-decreasing and there exists $C_f>0$ such that $f(i)\leq C_f i$ for all $i\geq m_1$. Also, suppose $\sum_{i=1}^{\infty}\frac{1}{f(i)^2}<\infty$ and that, almost surely,
$$\limsup_{n\to\infty} \frac{1}{\log s_n} \sum_{i=1}^{m_n-1}\frac{1}{f(i)} \leq \frac{1}{8C_f}.$$
Then, the random graph sequence $\{\GG_n\}_{n\geq 1}$ is $(\Psi,K)$ persistent for any fixed $K \ge 1$. In case $m_i=1$ for all $i \ge 1$ (the tree case), the monotonicity and sub-linearity assumptions on $f$ can be replaced by Assumption (A2) below.

\item[(ii)] \emph{Non-persistent regime} : If $\sum_{i=1}^{\infty}\frac{1}{f(i)^2} = \infty$ and a `continuity' assumption \eqref{contK} on the attachment function is satisfied, then, almost surely, the random graph sequence $\{\GG_n\}_{n\geq 1}$ is not $(\Psi,K)$ persistent for any $K \ge 1$.
\end{itemize}

The above suggests that, in the persistent regime, there is hope for devising degree centrality based root finding algorithms which produce confidence sets that are stable in the network size. In the non-persistent regime, the stability of purely degree based algorithms in the network size seems unlikely. It is then natural to ask how much of the network connectivity information is required to obtain efficient root finding algorithms.

In \cite{jordan}, it was shown that the Jordan centrality measure (defined only for the tree case $m_i \equiv 1$) exhibits persistence more generally whenever the attachment function $f$ satisfies Assumptions (A1)-(A3) below. This observation was used to obtain explicit confidence set bounds for root finding algorithms that grow polynomially in $\ep^{-1}$ ($\ep$ being the error tolerance). However, this centrality measure is applicable only for tree networks. Moreover, it is computationally expensive as computing the Jordan centrality measure for any single vertex requires knowledge of the connectivity structure of the whole network.

This leads to some natural questions. (i) Can we devise root finding algorithms for the dynamic networks defined in Section \ref{modeldef} for $m_i \equiv m \ge 1$ in the persistent regime ($\sum_{i=1}^{\infty}\frac{1}{f(i)^2}<\infty$), purely based on the empirical degree distribution, that are stable in the network size? In this case, can we give explicit bounds for the confidence set? (ii) In the non-persistent regime, can we obtain root finding algorithms using degree centrality and minimal local connectivity information around high degree vertices? What is the size of the associated confidence set and how does it grow with network size? These questions, besides their theoretical appeal, are extremely important from an applications perspective as most real world networks are large and far from tree like. Degree based centrality measures are much easier to compute than their global analogues (like Jordan centrality) as they are purely local. Moreover, they are well-defined and applicable on non-tree networks.

\subsection{Our contribution}
We attempt to answer the above questions in the current article.
In the process, we develop a quantitative approach to degree centrality. Theorem \ref{linearattach} obtains sharp upper and lower bounds on the size of the confidence set for the root (or budget) in terms of the inverse error tolerance for the linear preferential attachment model for general $m \ge 1$. Theorem \ref{general} addresses the general persistent regime. For trees, the bounds are almost optimal. Even for $m >1$, the bounds are polynomial in the inverse error tolerance and are explicitly obtained in terms of the attachment function. It is noted in Remark \ref{pereff} that, in the persistent regime, not only does one obtain purely local root finding algorithms that do not require a network tree structure as in \cite{jordan}, but also the bounds obtained are nearly as good as, and in most cases tighter than, those for Jordan centrality. A surprising observation, noted in the same remark, is that, as seen explicitly through the size of the confidence set in the linear attachment case in Theorem \ref{linearattach} , the efficacy of root detection using degree centrality gets better with increasing $m$. Thus, although the network becomes increasingly non-tree like for large $m$, the increased positive reinforcement of degrees allows the older vertices to gain in degree even further.

Next, we analyze the non-persistent regime. In Theorem \ref{nonpersistent}, we show that a neighborhood of the maximal degree vertex of an appropriately chosen radius $r_n \rightarrow \infty$ contains the root with high probability. For given error tolerance $\ep$, we also quantify the minimum network size for the above set to contain the root with probability at least $1-\ep$. We note in Remark \ref{nprem} that for large $n$ the radius $r_n$ is much smaller than the diameter of the network, and for sublinear attachment functions $f(k) = k^{\alpha}, k \ge 1,$ for any $\alpha \in (0,1/2]$, the size of the confidence set grows at a smaller rate than any positive power of the network size. In Remark \ref{da} we list some results (which essentially follow from \cite{BBpersistence}) on the degree distribution of fixed vertices and the maximum degree as the network grows. In particular, the degrees of fixed vertices scale similarly as the maximum degree in the persistent regime, but show a stark contrast in the non-persistent regime.

Although we use a number of ideas from \cite{BBpersistence} and \cite{jordan}, there are several significant new techniques and ideas developed here, some of which we now describe. We introduce a novel embedding of the network in the case $m_i \equiv m >1$ into a collapsed continuous time branching process (see Section \ref{BPpre}) which lets us bypass the monotonicity assumptions required on $f$ in \cite{BBpersistence} in the non-tree case. We also develop a unified approach to budget lower bounds (see Section \ref{lowerunif}) and apply it in Lemmas \ref{lbpart1}, \ref{lbm1} and \ref{lbgm}. A major technical advance in the current article is the detailed quantification of the asymptotic behavior of degrees, population sizes of associated continuous time branching processes (CTBP), the age of the maximal degree vertex and its distance from the root. To do this, we exploit properties of recursive distributional equations which characterize limiting normalized population sizes for CTBP (see Lemma \ref{lefttail}) extending the program of \cite{jordan}. Moreover, the proof of Theorem \ref{nonpersistent} relies on novel connections with asymptotics of the height of CTBP (maximal distance of root to any vertex), and characterizing the behavior of functionals of associated Laplace transforms near zero (see Lemmas \ref{vmaxerr} and \ref{alphax}).

We note here that for linear preferential attachment networks, there is an extensive body of literature (see \cite{bollobas2001degree,pekoz2017joint,senizergues2021geometry} and references therein) obtaining detailed asymptotics for the degree evolution of vertices, the limits of (appropriately normalized) degrees to explicit random variables and rates of convergence to these limits. It would be interesting to obtain bounds on the confidence sets by appealing to these results in the linear case and compare them to the ones obtained in the current article. However, our goal here is to develop general techniques that are robust enough to apply to a much wider class of attachment functions, and we stick to this principle throughout the paper.

Section \ref{mainres} states the main results and an outline of proof techniques. Section \ref{BPpre} describes branching process embeddings for the network and collects several tail estimates for important quantities of this branching process. Section \ref{mardeg} collects properties of some key martingales used to track degrees of individual vertices. Section \ref{lowerunif} describes a universal approach for obtaining budget lower bounds by showing that, if the root has relatively small degree in the graph $\GG_K$, then with probability at least $1/2$, its centrality score cannot beat those of an $\Omega(K)$ number of vertices at any subsequent time. Probabilities of such configurations in $\GG_K$ are then estimated in respective cases. Section \ref{linsec}, \ref{gensec}, \ref{nonsec} are respectively devoted to the proofs of Theorems \ref{linearattach}, \ref{general} and \ref{nonpersistent}.

We note here that, for linear preferential attachment models, there is an alternate notion of progressive exploration towards finding the root in large networks where one performs local moves towards vertices of higher degree \cite{borgs2012power,frieze2017looking,brautbar2010local}. However, this leads to a confidence set that grows polylogarithmically in $n$ (as opposed to our confidence set, whose size is stable in $n$), with the computational advantage of only having to explore a small part of the graph (compared to our method requiring the degree of each vertex in the network). We plan to investigate such exploratory root finding algorithms for more general attachment networks in a future article.

\mn
\textbf{Notation: }Throughout this paper, $C,C',C''$ will be used to denote generic positive constants whose values might change from line to line. Unless otherwise stated, all constants are implicitly assumed to depend on $m$ (number of edges attached per new vertex). We also set the convention `$\sum_{i=a}^b = 0$' whenever $a >b$. For two c\`adl\`ag processes $\chi_1, \chi_2$, we will write $\chi_1(\cdot) \overset{d}{\geq} \chi_2(\cdot)$ if there is a coupling $(\tilde{\chi}_1(\cdot),\tilde{\chi}_2(\cdot))$ such that $\tilde{\chi}_1(t) \geq \tilde{\chi}_2(t)$ for all $t \ge 0$.

\medskip
We now define some key objects used in this article. Let $\{E_i: i\geq 1\}$ be i.i.d. exponential random variables with mean 1. For $k \geq 1$, define
\begin{equation}\label{sdef}
S_k(l):=\sum_{i=1}^{l}\frac{E_i}{f^k(i)}, \quad l \in\mathbb{N}.
\end{equation}
Define the point process $\xi$ by 
\beq\label{pprocess}
\xi(t):=\sum_{n=1}^\infty 1_{\{S_1(n)\leq t\}}, \quad t\geq 0.
\eeq
We will write $\sigma_l := S_1(l), \, l \ge 1,$ to denote the arrival time of the $l$-th individual in the above point process.
For $ A \in\mathbb{N}_0$, denote by $\xi_A(\cdot)$ the point process obtained as in \eqref{pprocess} with attachment function $f_A(\cdot) := f(A+\cdot)$ replacing $f(\cdot)$. Note that $\xi_0(\cdot) \overset{d}{=} \xi(\cdot)$. By convention, for any $A\in \mathbb{ N}_0$, we take $\xi_A(t) = 0$ for $t < 0$.

Define the functions
\beq\label{phd}
\Phi_k(l):=\sum_{i=1}^{l-1}\frac{1}{f^k(i)}, \quad l,k\in\mathbb{N}.
\eeq
Extend $f$ to a function on $[1, \infty)$ by requiring $f(x) = f(\lfloor x \rfloor)$ where $x \ge 1$. Extend $\Phi_k(\cdot), k = 1, 2,$ to $[1,\infty)$ via linear interpolation and set $\Phi_k(z)=0$ for $z \in [0,1]$. Note that, using the above extensions of
$f, \Phi_1,\Phi_2$,
$$\Phi_k(x)=\int_1^x \frac{1}{f^k(z)}dz, \quad\quad x\geq 1, k=1,2.$$
 Define
 $$\KK(t) :=\Phi_2\circ \Phi_1^{-1}(t),\quad\quad t\geq 0.$$
We will use the notation $\Phi_k(\infty):=\lim_{l\to\infty}\Phi_k(l)$.

\subsection{Assumptions on attachment functions}
Let $\mu(t) := E(\xi(t)),t \geq 0$. We can naturally obtain a non-negative measure on $(\RR_+,\mathcal{B}(\RR_+))$ from $\mu$ (the intensity measure of $\xi$) which we also denote by $\mu$. We define the Laplace transform of $\mu$  by
$$\hat{\mu}(\theta)=\int_0^\infty e^{-\theta t}\mu(dt), \quad  0 \le \theta<\infty.$$
It is known that (see \cite[Equation (3)]{rudas2007random})
$$\hat{\mu}(\theta)=\sum_{k=1}^\infty \prod_{i=1}^k \frac{f(i)}{\theta+f(i)}.$$
Throughout this article, we have the following assumptions on $f$:\\\\
(A1) Every attachment function satisfies $f(k) \rightarrow \infty$ as $k \rightarrow \infty$, and $f$ can grow at most linearly, i.e., there exists $C_f>0$ such that $f(i)\leq C_f i$ for all $i\geq m_1$.

\mn
(A2) Define $\underline{\lambda}:=\inf\{ \lambda>0: \hat{\mu}(\lambda)<\infty\}.$ We assume,
\begin{equation*}
\underline{\lambda}<\infty \quad \text{and} \quad \lim_{\lambda \downarrow \underline{\lambda}} \hat{\mu}(\lambda)>1.
\end{equation*}

Assumption (A2) together with the monotonicity of $\hat{\mu}(\cdot)$ implies that there exists a unique $\lambda^*:=\lambda^*(f)$ such that 
\beq\label{malthusian}
\hat{\mu}(\lambda^*)=1.
\eeq
This value $\lambda^*$ is often referred to as the \emph{Malthusian rate} of growth parameter. The above assumptions are relatively standard in the continuous time branching process literature, and will be essential for continuous time embeddings of our discrete network process discussed in Section \ref{BPpre}.

\mn
(A3) With $\lambda^*$ denoting the Malthusian rate of growth parameter, $\limsup_{i\to\infty}f(i)/i<\lambda^*$.\\

Observe that Assumption (A1) implies that $f_* := \min_{i\geq m_1} f(i) >0$.

\begin{remark}
Suppose $f$ satisfies Assumptions (A1) and (A2). Then, by \cite[Lemma 2.3]{jordan}, Assumption(A3) is satisfied if any of the following conditions hold: (a) there exists $C^*\geq 0$ such that $\lim_{i\to\infty} f(i)/i = C^*$; (b) $\limsup_{i\to\infty} f(i)/i < f_*$.  
	In particular (affine) preferential attachment and sublinear preferential attachment (there exists $\alpha \in (0,1)$ such that $f(k) \le k^{\alpha}$ for all $k \ge 1$) satisfy Assumptions (A1)-(A3). 
\end{remark}

\section{Main results}\label{mainres}

Our main results give bounds on the minimum number of central vertices, measured by some centrality measure, required to produce a confidence set of the root for a given error tolerance. This is quantified through the notion of \emph{budget} as defined below.

Let $\mathbb{G}$ be a collection of (possibly labeled) graphs, and let $\mathbb{G}^\circ$ denote the corresponding set of unlabeled graphs.
For any $K\geq 1$ and a centrality measure $\Psi$, we will write $H_{K,\Psi}: \mathbb{G} \rightarrow V(\mathbb{G})$ to denote the function that maps $G\in \mathbb{G}$ to a subset of $K$ most central vertices in $G$ as measured by $\Psi$ (ties being broken arbitrarily).
\begin{definition}\label{budget}
Fix $0 < \ep < 1$ and $K \geq 1$. A mapping $H_{K,\Psi}$ is called a budget $K$ root finding algorithm with error tolerance $\ep$ for the collection of random graphs $\{\GG_n : n \geq 1\}$ if
\begin{equation}\label{bug}
\liminf_{n\to\infty} P(v_0 \in H_{K,\Psi}(\GG_n)) \geq 1- \ep.
\end{equation}
We will write $K_{\Psi}(\ep)$ to denote the smallest budget $K$ such that $H_{K,\Psi}(\cdot)$ is a root finding algorithm with error tolerance $\ep$ for the collection of random graphs $\{\GG_n: n\geq 1\}$. If there is no finite $K$ satisfying \eqref{bug}, we set $K_{\Psi}(\ep)= \infty$.
\end{definition}

\subsection{The persistent regime}

In this section, we restrict attention to $\Psi$ being the degree centrality measure, that is $\Psi(v) = deg(v), \ v \in V(\mathbb{G}^\circ)$. We also focus on the persistent regime, namely, attachment functions satisfying $\Phi_2(\infty) < \infty$. Our first result gives sharp upper and lower bounds on $K_{\Psi}(\ep)$ when $f$ is linear.

\begin{theorem}[Linear attachment]\label{linearattach}
Suppose $f(i)= i+\beta$ for $i\geq 1,$ for some $\beta\geq 0$, and $m_i \equiv m \ge 1$. There exist positive constants $C_1,C'_1,C_2$, depending on $m,\beta$, such that for any $\ep \in (0,1)$,
$$ \frac{C'_1}{\ep^{\frac{2m+\beta}{m(m+\beta)}}}\leq K_\Psi(\ep)\leq \frac{C_1}{\ep^{\frac{2m+\beta}{m(m+\beta)}}}\exp\left(\sqrt{C_2\log \frac{1}{\ep}}\right).$$

\end{theorem}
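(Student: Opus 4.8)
The plan is to work with the continuous-time branching process (CTBP) embedding of the linear preferential attachment network and to track the degree of the root versus the degrees of the other early vertices via the associated martingales. For the linear attachment function $f(i) = i + \beta$, the point process $\xi$ is essentially a (time-changed) Yule-type process, so the Malthusian parameter, the limiting martingale $W_\infty$ for the normalized population size, and the limiting (rescaled) degree of a fixed vertex are all explicitly computable. The strategy splits, as the theorem statement suggests, into an upper bound and a lower bound on $K_\Psi(\ep)$, and these use genuinely different mechanisms.

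For the \textbf{upper bound}, I would fix a large constant $K$ and show $\liminf_n P(v_0 \in H_{K,\Psi}(\GG_n)) \ge 1-\ep$ when $K \asymp \ep^{-(2m+\beta)/(m(m+\beta))}\exp(\sqrt{C_2 \log(1/\ep)})$. By persistence (item (i) of the summarized results from \cite{BBpersistence}, which applies here since $f$ is non-decreasing, sublinear, and $\Phi_2(\infty)<\infty$), the set of top-$K$ degree vertices fixates a.s., so it suffices to control the event that the root fails to be among the top $K$ in the limiting configuration. In the CTBP picture, the rescaled degree of vertex $v_i$ converges to a random variable whose law depends on the splitting time $\sigma_i$ (arrival time of the $i$-th individual) and an independent copy of the limiting degree martingale; the root corresponds to $i=0$ and gets the full growth window. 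The root is beaten by $v_i$ only if $v_i$'s limiting degree exceeds the root's, an event whose probability decays like a power of $\sigma_i^{-1}$ (hence like a power of $i^{-1}$, since $\sigma_i \asymp \log i$... more precisely for linear $f$, degrees grow polynomially so one gets genuine power decay in $i$). Summing these probabilities over $i \ge K$ and using the left-tail estimates for the limiting martingale (the analogue of Lemma \ref{lefttail}) to control the rare event that the root's own normalized degree is small, one gets a bound of the form $C\ep$. The $\exp(\sqrt{C_2\log(1/\ep)})$ correction factor arises precisely from optimizing the trade-off between "the root has an atypically small limiting degree" (which has a stretched-exponential left tail, contributing the $\exp(\sqrt{\cdot})$) and "a specific late vertex has an atypically large degree."

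For the \textbf{lower bound}, I would use the universal approach described in Section \ref{lowerunif}: show that if the root already has a relatively small degree in $\GG_K$ — an event of probability bounded below by an absolute constant — then with probability at least $1/2$ it is beaten at all later times by an $\Omega(K)$ number of the vertices $v_1,\dots,v_K$, so those vertices must all be included in any valid confidence set. Quantitatively, one needs: (a) a lower bound on $P(d_0(s_K) \le \text{(small threshold)})$, obtained from the explicit Beta/Polya-urn-type description of early degrees in the linear model; and (b) a lower bound, conditionally on that, on the number of $i \le K$ whose degree in $\GG_K$ already exceeds the root's and which then stay ahead forever — this uses the martingale convergence together with a Doob/maximal-inequality argument controlling the probability that a trailing vertex ever overtakes. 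Combining, one forces $K_\Psi(\ep) \ge C_1' \ep^{-(2m+\beta)/(m(m+\beta))}$; the exponent is the reciprocal of the power-law decay rate of the "overtaking" probability, which is why it matches the exponent in the upper bound.

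The \textbf{main obstacle} I expect is getting the exponent $\frac{2m+\beta}{m(m+\beta)}$ exactly right on both sides simultaneously — this requires pinning down the precise power-law tail exponent of the event $\{W^{(i)}_\infty \text{ (rescaled degree of } v_i) > W^{(0)}_\infty\}$ as a function of $\sigma_i$, which in turn rests on the exact behavior near $\theta = \lambda^*$ of the relevant Laplace transforms and on sharp two-sided tail bounds for the limiting martingale $W_\infty$ and for the limiting normalized degrees. For general $m > 1$ the collapsed-CTBP embedding (Section \ref{BPpre}) must be used carefully so that the $m$ edges from each new vertex are handled consistently; the dependence of the exponent on $m$ (and the pleasant fact, noted in Remark \ref{pereff}, that larger $m$ helps) comes out of how collapsing $m$ individuals into one vertex multiplies the effective growth rate of early degrees. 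The secondary obstacle is the stretched-exponential correction: extracting exactly $\exp(\sqrt{C_2\log(1/\ep)})$ rather than a cruder bound requires the sharp left-tail asymptotics of $W_\infty$ near $0$ from (the linear-$f$ instance of) Lemma \ref{lefttail} and a careful optimization, which I would isolate as a standalone estimate before assembling the final union bound.
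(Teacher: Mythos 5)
Your lower-bound plan is essentially the paper's: it applies the universal reduction of Section \ref{lowerunif} (Lemma \ref{lbpart1}) together with a lower bound on the probability that the root's degree stays at $m$ up to time $K$ (Lemma \ref{lbpart2}), and this yields exactly the exponent $\frac{2m+\beta}{m(m+\beta)}$. One correction: the conditioning event ``root has small degree in $\GG_K$'' does \emph{not} have probability bounded below by an absolute constant; its probability is of order $K^{-m(m+\beta)/(2m+\beta)}$, and it is precisely this polynomial decay that produces the exponent. Your item (a) indicates you intend to compute this, so I read the ``absolute constant'' phrase as a slip rather than a conceptual error.

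The genuine gap is in your upper bound. The step ``sum over $i\ge K$ the probabilities that $v_i$'s limiting degree exceeds the root's'' cannot give the theorem: that marginal beating probability is at least of order $i^{-a}$ with $a=\frac{m(m+\beta)}{2m+\beta}$ (this is exactly what the lower bound exploits), so the series $\sum_{i\ge K}i^{-a}$ diverges whenever $a\le 1$ --- in particular for every $\beta\ge 0$ in the tree case $m=1$ --- and when it converges it yields a budget of order $\ep^{-1/(a-1)}$, strictly worse than the claimed $\ep^{-1/a}$. The beating events are strongly positively correlated through the root's degree, and the paper's proof exploits precisely this: the dominant rare event is the \emph{single} event $A_n^c$ that the root's degree is below roughly $e^{M\sqrt{\log n}}$ at time $n$ (Step 2 of Section \ref{linearub}, bounded by $Cn^{-a}e^{C\sqrt{\log n}}$ via a coupling of $d_0$ with the sum of $m$ degrees in an auxiliary one-edge-per-step model with attachment $\bar f(i)=i+\beta/m$, plus Yule moment-generating-function estimates), while on $A_n$ the Gaussian-type tails of the degree-tracking martingales $M_A$ (Lemma \ref{mprop}) show that a vertex arriving after time $n$ essentially never overtakes (Step 1, probability $\le C n^{-m}$). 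Consequently your explanation of the $\exp\bigl(\sqrt{C_2\log(1/\ep)}\bigr)$ factor is also off: it does not come from a stretched-exponential left tail of $W_\infty$ (Lemma \ref{lefttail} gives power-law left-tail bounds, and for linear $f$ the left tail is genuinely polynomial); it arises from optimizing the head-start threshold $\alpha_n\asymp\sqrt{\log n}$, balancing the Gaussian overtaking bound $e^{-c\alpha_n^2}$ against the factor $e^{(m+\beta)\alpha_n}$ inflating $P(A_n^c)$. Note also that the paper never invokes the persistence theorem as a black box --- it proves the needed quantitative overtaking bounds directly; replacing those bounds by marginal limiting-degree comparisons, as you propose, is where the exponent is lost.
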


The next result considers the persistent regime in generality. Part (i) gives near optimal bounds for $K_{\Psi}(\ep)$ in the tree case in terms of the Malthusian rate $\lambda^*$ and  $f_* := \min_{i\geq m_1} f(i)$. Part (ii) gives upper and lower bounds for $K_{\Psi}(\ep)$ in the non-tree case that are polynomial in $\ep^{-1}$.
\begin{theorem}\label{general}
Suppose the attachment function $f$ satisfies Assumptions (A1)-(A3). Moreover, assume $\Phi_2(\infty)<\infty$.

\mn
(i) Suppose $m_i\equiv m= 1$. 
For any fixed $\delta \in (0,1)$, there exist positive constants $C_1$ (not depending on $\delta$) and $C_{\delta}$ (depending on $\delta$) such that for all $\ep \in (0,1)$,
$$\frac{C_1}{\ep^{\frac{\lambda^*}{f_*}}}\leq K_\Psi(\ep)\leq \frac{C_\delta}{\ep^{\frac{\lambda^*}{(1-\delta)f_*}}}.
$$

\mn
(ii) Suppose $m_i\equiv m>1$. For any $\delta \in (0,1)$, there exist positive constants $C_1$ (not depending on $\delta$) and  $\bar{C}_{\delta}$ (depending on $\delta$) such that for all $\ep \in (0,1)$,
$$\frac{C_1}{\ep^{\frac{f_*}{mf(m)}}}\leq K_\Psi(\ep) \leq \frac{\bar{C}_{\delta}}{\ep^{\frac{2C_f}{(1-\delta)f_*}}}, \ \ep \in (0,1).$$

\end{theorem}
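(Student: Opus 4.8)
The plan is to derive both bounds from a continuous-time branching process (CTBP) embedding together with a universal budget-lower-bound scheme. I would first recall (Section \ref{BPpre}) that the discrete network $\GG_n$ embeds into a (collapsed, when $m>1$) CTBP with point process $\xi$; the degree of vertex $v_i$ at time $t$ after its birth behaves like $\xi_{A_i}(\lambda^* t)$-type quantities, and the reproduction of the whole process grows like $e^{\lambda^* t}$. The key heuristic is that a vertex born ``at scale $j$'' (i.e.\ the $j$-th vertex, arriving when roughly $j$ edges/vertices are present) has, in the limit, a degree that is comparable to $j^{-\rho}$ times the maximal degree for an appropriate exponent $\rho$; matching the persistence/non-persistence threshold $\Phi_2(\infty)<\infty$, one gets $\rho = f_*/\lambda^*$ in the tree case and the analogue $\rho = mf(m)/f_*$-type exponent driving the polynomial rate in the $m>1$ case. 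Concretely, I would use the martingales of Section \ref{mardeg} to control $S_k(l)$ and $\KK(t)=\Phi_2\circ\Phi_1^{-1}(t)$, and the recursive distributional equation for the normalized limiting population size (Lemma \ref{lefttail} and its left-tail bound) to estimate the probability that the root's degree is beaten.

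\emph{Upper bound.} Fix $\delta\in(0,1)$. I would show that for $K$ large enough, with probability at least $1-\ep$, every vertex $v_i$ with $i>K$ has strictly smaller degree than the root $v_0$ for all sufficiently large $n$, hence the root lies among the top $K$ degree vertices. By the CTBP embedding and Borel--Cantelli, it suffices to bound $\sum_{i>K} P(\deg_{\GG_n}(v_i)\ge \deg_{\GG_n}(v_0)\text{ for some large }n)$. The persistence result of \cite{BBpersistence} guarantees that this sum is finite; the quantitative refinement comes from plugging the explicit tail estimates for the limiting degree martingale limits (controlled via $f_*$, $C_f$, $\lambda^*$, and the RDE analysis) into each summand, which decays like $i^{-\lambda^*/((1-\delta)f_*)}$ in the tree case and like $i^{-2C_f/((1-\delta)f_*)}$ in the $m>1$ case. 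Summing the tail and inverting in $\ep$ gives $K\le C_\delta\, \ep^{-\lambda^*/((1-\delta)f_*)}$ (resp.\ $K\le \bar C_\delta\, \ep^{-2C_f/((1-\delta)f_*)}$). The loss of $\delta$ and the replacement of the sharp exponent by $2C_f/f_*$ when $m>1$ reflect that we only have two-sided bounds $f_*\le f(i)\le C_f i$ rather than exact asymptotics for general $f$.

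\emph{Lower bound.} This uses the universal scheme of Section \ref{lowerunif}: if the root has relatively small degree in the snapshot $\GG_K$, then with probability at least $1/2$ there are $\Omega(K)$ vertices whose centrality score the root can never overtake at any later time, forcing $K_\Psi(\ep)=\Omega(K)$ for the relevant range of $\ep$. So I would (i) estimate, via the branching embedding and the martingale bounds, the probability that $\deg_{\GG_K}(v_0)$ is below a suitable threshold and simultaneously $\Omega(K)$ other vertices have degree exceeding it (this is the content of Lemmas \ref{lbpart1}, \ref{lbm1}, \ref{lbgm}); (ii) convert this into a lower bound on $\ep$ in terms of $K$ by noting that on this event the budget-$K$ algorithm fails with probability bounded below; (iii) invert. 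The exponent $\lambda^*/f_*$ (tree case) and $f_*/(mf(m))$ ($m>1$ case) emerge from matching the probability that the root stays small against the number of competitors, again governed by the left-tail behavior of the limiting population size near zero.

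\emph{Main obstacle.} The hard part is the quantitative tail control in the non-tree case $m>1$: without monotonicity of $f$ one cannot directly use the comparison arguments of \cite{BBpersistence}, so I must run everything through the collapsed-CTBP embedding and carefully track how the exponent degrades to $2C_f/f_*$ — in particular, establishing the left-tail estimate for the normalized limiting population size (Lemma \ref{lefttail}) and showing it is strong enough to make the relevant series summable and to feed the lower-bound machinery. A secondary difficulty is ensuring the ``for all large $n$'' quantifier in the definition of persistence/budget is handled uniformly, which requires the almost-sure fixation coming from the martingale convergence combined with the tail bounds, rather than just convergence in distribution.
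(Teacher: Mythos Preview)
Your overall architecture matches the paper's: the lower bounds come from the universal scheme of Section~\ref{lowerunif} (Lemmas~\ref{lbpart1}, \ref{lbm1}, \ref{lbgm}), and the upper bounds come from bounding, for each $n$, the probability $P(E_n)$ that vertex $v_n$ ever overtakes the root in degree, then inverting. However, you have misidentified the technical inputs that drive the exponents.

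For the upper bound, the paper does not use the RDE or the left-tail estimate for $W_\infty$ (Lemma~\ref{lefttail}) at all. Instead it writes $P(E_n)\le P(E_n\cap A_n)+P(A_n^c)$ with $A_n=\{\Phi_1(d_0(s_n))\ge\alpha_n\}$. The first term is handled by the martingale tail bounds of Lemma~\ref{mprop}. The dominant term $P(A_n^c)$ is controlled via a direct exponential-moment bound on $S_1(l)-\Phi_1(l+1)$: for $\theta\in(0,f_*)$,
\[
P(S_1(l)-\Phi_1(l+1)\ge x)\le e^{-\theta x}\exp\Big(\tfrac{f_*}{f_*-\theta}\theta^2\Phi_2(\infty)\Big),
\]
and taking $\theta\uparrow f_*$ yields the exponent $(1-\delta)f_*/\lambda^*$ in the tree case. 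For $m>1$ the exponent $2C_f$ arises not from any left-tail analysis but from a one-line stochastic domination: in the collapsed CTBP, the waiting time $\Theta_{l+1}$ dominates an exponential of rate $2ms_lC_f$, giving $\tau_n\gtrsim \frac{1}{2C_f}\log n$. Your stated ``main obstacle'' (left-tail via RDE) is in fact the engine of the non-persistent Theorem~\ref{nonpersistent}, not this one.

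For the lower bounds, the exponents are also more elementary than you suggest. In the $m=1$ case (Lemma~\ref{lbm1}), the $K^{-f_*/\lambda^*}$ comes from the explicit calculation $P(E_{i_0}/f(i_0)>\tfrac{1}{\lambda^*}\log K+t_\delta)=K^{-f_*/\lambda^*}e^{-f_*t_\delta}$, combined with showing via qualitative convergence to $W_\infty$ (no left-tail rate needed) that $\Omega(K)$ competitors exist with uniformly positive probability. In the $m>1$ case (Lemma~\ref{lbgm}), there is no CTBP at all: one directly computes $P(\text{root has degree }m\text{ in }\GG_K)\ge CK^{-mf(m)/f_*}$ as a telescoping product over attachment steps.
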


\begin{remark}\label{pereff}
Several remarks are in order.
\begin{itemize}
\item[(a)] Note that if $f(k) \le g(k)$ for all $k \ge 1$, then $\lambda^*(f) \le \lambda^*(g)$. Using this and the known Malthusian rates for uniform attachment and linear preferential attachment, we conclude $f_* \le \lambda^* \le 2C_f$. From this observation, it follows that the bounds in (i) are indeed tighter than those in (ii) (applied to $m=1$). 
\item[(b)] In \cite{jordan}, polynomial upper and lower bounds in $\ep^{-1}$ are derived for $K_{\Psi}(\ep)$ when $\Psi$ is the Jordan centrality measure. However, this measure is only applicable in the tree case and is computationally expensive as the score for each vertex requires information on the size of all the subtrees rooted at that vertex. The degree centrality measure works on non-tree networks and its computation only requires the degree of each vertex. Moreover, the exponent of $\ep^{-1}$ in the upper bound on $K_{\Psi}(\ep)$ for the Jordan centrality measure obtained in \cite[Theorem 3.1 (a)]{jordan} is $2C_f/f_*$ (taking $\beta=0$ there), whereas the corresponding exponent in (i) above is $\lambda^*/(1-\delta)f_*$ (for arbitrarily small $\delta>0$). As $\lambda^*/f_* \le 2C_f/f_*$ (see (a)), we conclude that, in the tree case under the persistent regime, the obtained upper bound on the size of the confidence set for the root using degree centrality is comparable to, and in most cases tighter than, that in \cite{jordan} using Jordan centrality. 

\item[(c)] In the tree case, when $f$ is linear, from Theorem \ref{linearattach} with $m=1$ and \cite[Corollary 3.3 (c)]{jordan}, the exact exponent of $\ep^{-1}$ in the order of $K_{\Psi}(\ep)$ in both degree and Jordan centrality measures is seen to be $(2+\beta)/(1+\beta)$. In this sense, degree centrality does just as good as Jordan centrality on trees for linear $f$, although it is computationally far more efficient.

\item[(d)] The exponent of $\ep^{-1}$ in the upper bound in Theorem \ref{general} (ii) equals that in Theorem \ref{linearattach} (taking $\delta \sim 0$) in the linear case with $\beta=0$. As Theorem \ref{linearattach} captures the exact exponent, the upper bound in Theorem \ref{general} (ii), in this sense, is the best one can obtain. In this case, the lower bound exponent is clearly not tight, and it is unclear if there are examples of $f$ for which the lower bound is tight in the non-tree case. The technical difficulty in getting sharp bounds in this set-up is that the underlying network does not admit an embedding into a `true'  continuous time branching process (see Section \ref{BPpre}) which renders many of the tools we use to obtain refined asymptotics in the tree case inapplicable. 

\item[(e)] The exponent of $\varepsilon^{-1}$ in the bounds obtained in Theorem \ref{linearattach} is seen to decrease as $m$ grows. This surprising observation implies that, for large $m$, although the network becomes increasingly non-tree like, the efficacy of root detection using degree centrality increases. This can be intuitively explained by the stronger positive reinforcement of degrees as $m$ grows: each new vertex comes in with more edges which tend to increase the degrees of high degree vertices even more.

\item[(f)] The constants appearing in the above theorems implicitly depend on $m$. It will be interesting to make this dependence more explicit. It will also be interesting to extend the above results to non-constant and possibly random attachment sequences $\{m_i\}$. We will address this in subsequent work.
\end{itemize}
\end{remark}

\subsection{The non-persistent regime}

Consider the non-persistent regime where $\Phi_2(\infty)=\infty$ and suppose $f(k)\to\infty$ as $k\to\infty$. In this regime, under some mild assumptions on $f$ (see 1, 2 in Theorem \ref{nonpersistent}), the identity of the maximal degree vertex changes infinitely often as the network grows \cite{BBpersistence}. Moreover, for any fixed collection of vertices, the probability of the maximal degree vertex lying in this collection goes to zero as the network grows (see \eqref{indas}). Thus, there is no hope of obtaining a confidence set for the root solely based on empirical degree information that is stable in the network size. On the other hand, \cite{jordan} showed that a stable confidence set can be obtained for tree networks, but this requires connectivity information for the whole network via Jordan centrality and hence is computationally expensive on large networks. In this section, we exhibit how the connectivity information in a neighborhood around the maximal degree vertex of radius much smaller than the network diameter can be used to furnish confidence sets of size much smaller than the network size.

In the tree case, \cite[Theorem 4.12]{BBpersistence} sheds light on the asymptotics of the age of the maximal degree vertex as a function of network size. More precisely, define $\mathcal{I}_n^* := \inf\{1 \le i \le n: d_i(n) \ge d_j(n) \, \forall \, j \le n\}$ for $n \ge 1$. Then, \cite[Theorem 4.12]{BBpersistence} shows that
\begin{equation}\label{indas}
\frac{\log \mathcal{I}_n^*}{\KK(\frac{1}{\lambda^*}\log n)} \xrightarrow{P} \frac{(\lambda^*)^2}{2}, \ \ \text{as} \ \ n \rightarrow \infty.
\end{equation}
Thus, if a comparison is obtained between the age of the maximal degree vertex and its distance from the root, that is, $\rho_n := \operatorname{dist}(v_{\mathcal{I}^*_n}, v_1)$, then the above result can be used to obtain the asymptotics of $\rho_n$. Then, one can expect to `find the root' inside a ball around the maximal degree vertex of radius comparable to $\rho_n$. This is the idea behind the following theorem. In Theorem \ref{nonpersistent}, $r_n$ can be thought of as $\rho_n$ and $b_n$ represents the approximate number of vertices in the ball of radius $r_n$ around the maximal degree vertex.

For a sequence $\{r_n\}_{n \ge 1} \subset \mathbb{N}$, we define $B_n(v,r_n)$ to be the set of all vertices in $\GG_n$ which are at graph distance at most $r_n$ from $v$. Let $|B_n(v,r_n)|$ denote the number of vertices in $B_n(v,r_n)$.
\begin{theorem}\label{nonpersistent}
Suppose $m_i\equiv m= 1$. Suppose $f(i)\leq C_0i^{\alpha}$ for $i \ge 1,$ for some $C_0>0$ and $\alpha\in (0,1/2]$. Moreover, assume that $f(k)\to\infty$ as $k\to\infty$. Also assume that $\KK(\cdot) := \Phi_2 \circ \Phi_1^{-1}(\cdot)$ satisfies the following `continuity' assumptions:
\begin{itemize}
\item[1.]
$\lim_{\delta \downarrow 0}\limsup_{t\to\infty} \frac{\KK((1+\delta)t)}{\KK(t)}=1.$
\item[2.] There exists positive constants $t'$, $D$ such that $\KK(3t)\leq D\KK(t)$ for all $t\geq t'$.
\end{itemize}
Define
\begin{align*}
r_n&:=c_1\lambda^*\KK(\frac{1}{\lambda^*}\log n),\\
b_n&:=\exp\left( \frac{8}{1-\alpha}r_n \log\left( \frac{\log n}{2\lambda^*r_n}\right) \right),
\end{align*}
where $c_1$ is a positive constant (independent of $n$) defined in Lemma \ref{height}.
There exist $C,C'>0$ (independent of $n$) such that for any $\ep \in (0,1)$, 
$$P\left(v_0 \in B_n(v_{max}(n),r_n), \,|B_n(v_{max}(n),r_n)| \le b_n\right) \geq 1-\ep \ \text{ for all } \ n\geq \exp\left( \lambda^* \KK^{-1}\left(\frac{\log(C/\ep)}{C'}\right)\right).$$
\end{theorem}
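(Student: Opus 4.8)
The plan is to combine three ingredients: (a) the asymptotics of the age $\mathcal{I}_n^*$ of the maximal degree vertex from \eqref{indas}, (b) a comparison between the age of a vertex and its graph distance from the root via the height of the associated CTBP, and (c) a volume bound on balls in $\GG_n$. First I would set up the continuous time branching process embedding described in Section \ref{BPpre}: the tree $\GG_n$ (here $m=1$) is realized as a Rudas--Tóth--Valkó type CTBP $(\mathcal{T}_t)_{t\ge 0}$ stopped at the time $\tau_n$ when the population reaches $n+1$, with $\tau_n \approx \tfrac{1}{\lambda^*}\log n$. In this embedding, the age of a vertex corresponds to its birth time in the CTBP, so \eqref{indas} says that the birth time of $v_{\max}(n)$ is, to leading logarithmic order, $\tau_{\mathcal{I}_n^*} \approx \tfrac{1}{\lambda^*}\log \mathcal{I}_n^* \approx \tfrac{\lambda^*}{2}\KK(\tfrac1{\lambda^*}\log n)$.

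Next I would control $\rho_n = \operatorname{dist}(v_{\mathcal{I}_n^*}, v_1)$, the distance from the maximal degree vertex to the root. The key point is that the distance of a vertex from the root equals its generation in the CTBP, and generations are controlled by the \emph{height} of the CTBP up to the relevant birth time. Using the height asymptotics recorded in the lemma I will cite as Lemma \ref{height} (where $c_1$ is defined), the generation of any vertex born by CTBP-time $t$ is at most $c_1 \lambda^* t$ with high probability; more precisely, since $v_{\mathcal{I}_n^*}$ is born around CTBP-time $\tfrac{\lambda^*}{2}\KK(\tfrac1{\lambda^*}\log n)$... wait, I should instead bound the generation of $v_{\mathcal{I}_n^*}$ at the time the whole tree $\GG_n$ is grown: its distance from the root is at most the height of the subtree, hence at most $c_1\lambda^*\KK(\tfrac1{\lambda^*}\log n) = r_n$ with high probability. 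The constant $c_1$ in the definition of $r_n$ is chosen precisely so that Lemma \ref{height} gives $\rho_n \le r_n$ with probability $\ge 1-\ep/2$ once $n$ is large enough; the quantitative threshold on $n$ comes from inverting the deviation bound in that lemma, which is where $\KK^{-1}(\log(C/\ep)/C')$ enters. Since $v_0 = v_1$ (the root is within distance $0$... actually the root is $v_0$, at distance $\le 1$ from $v_1$; in the tree case with $m=1$ one has the two initial vertices, and $v_0 \in B_n(v_{\mathcal{I}_n^*}, \rho_n + 1) \subseteq B_n(v_{\max}(n), r_n)$ absorbing the $+1$ into $c_1$), this establishes the first event in the theorem.

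For the volume bound, I would estimate $|B_n(v_{\max}(n), r_n)|$. A ball of radius $r_n$ around any vertex in $\GG_n$ can be bounded by controlling, generation by generation, the offspring counts in the CTBP: the number of descendants within $r_n$ generations of a given individual, among the first $n$ individuals, is dominated by a Galton--Watson type count whose per-generation growth is governed by $\max_i (\text{degree})^{\alpha}$-type factors. Using the sublinearity $f(i)\le C_0 i^{\alpha}$ with $\alpha \le 1/2$ and the degree bounds (maximal degree in $\GG_n$ is, up to constants, $\Phi_1^{-1}$ of $\tfrac1{\lambda^*}\log n$-type quantities, which feeds into $\log(\log n/(2\lambda^* r_n))$), one obtains that each generation multiplies the count by at most $\exp\big(\tfrac{8}{1-\alpha}\log(\tfrac{\log n}{2\lambda^* r_n})\big)^{1/r_n}$-type factor, so over $r_n$ generations the ball has at most $b_n = \exp\!\big(\tfrac{8}{1-\alpha} r_n \log(\tfrac{\log n}{2\lambda^* r_n})\big)$ vertices with probability $\ge 1-\ep/2$. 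Here Lemmas \ref{vmaxerr} and \ref{alphax} on the behavior of Laplace transform functionals near zero are used to translate $\KK$-asymptotics into the degree/volume estimates. A union bound over the two events then yields probability $\ge 1-\ep$, and tracking the constants in the two deviation estimates gives the stated threshold $n \ge \exp(\lambda^* \KK^{-1}(\log(C/\ep)/C'))$.

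The main obstacle will be the volume bound: controlling $|B_n(v_{\max}(n),r_n)|$ requires a simultaneous handle on the degrees of \emph{all} vertices within $r_n$ generations (since the branching mechanism's reproduction rate depends on current degree), and degrees of old vertices can be large. The delicate part is showing that the product of per-generation degree-dependent growth factors telescopes into the clean expression $\tfrac{8}{1-\alpha} r_n \log(\tfrac{\log n}{2\lambda^* r_n})$; this is where the precise asymptotics of $\KK$ near infinity and of $\Phi_1^{-1}$, the continuity assumptions 1 and 2, and the constraint $\alpha \le 1/2$ (which ensures $\Phi_2(\infty)=\infty$, i.e.\ we are genuinely in the non-persistent regime, and controls the tail of $S_2$) all get used together. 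Comparatively, the first event (root in the ball) is more routine once the height asymptotics of the CTBP are in hand from Lemma \ref{height}.
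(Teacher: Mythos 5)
Your first half is correct in outline and matches the paper's Lemma \ref{vmaxerr}: the distance of $v_{max}(n)$ from the root is fixed at its birth, so on the event that its birth time is at most $2t_n\approx \lambda^*\KK(\tfrac1{\lambda^*}\log n)$ it is bounded by $H_{2t_n}$, the height of the CTBP at that \emph{early} time, and Lemma \ref{height} then gives $\le c_1\cdot 2t_n=r_n$; your ``correction'' to use the height of the fully grown tree would give only $O(\log n)$, so your first instinct was the right one. However, there is a quantitative gap here: \eqref{indas} is only a convergence-in-probability statement from \cite{BBpersistence} with no rate, whereas the theorem's explicit threshold $n\ge \exp\bigl(\lambda^*\KK^{-1}(\log(C/\ep)/C')\bigr)$ requires a tail bound of the form $P\bigl(\II^*_c(T_n)>2t_n\bigr)\le C\exp\bigl(-C'\KK(\tfrac1{\lambda^*}\log n)\bigr)$. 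Producing this is a substantial part of the paper's work (Lemma \ref{maxposition}, built on the quantitative Lemmas \ref{lem7.12}--\ref{lem7.14}, the left-tail bound for $W_\infty$ in Lemma \ref{lefttail}, and the $T_n$ bounds in Lemma \ref{Tbounds}); inverting the height lemma alone, as you suggest, cannot supply it.

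The genuine gap is the volume bound. You propose to control $|B_n(v_{max}(n),r_n)|$ directly by per-generation degree products or a Galton--Watson-type domination around $v_{max}(n)$, and you yourself flag that you do not see how this telescopes to $b_n$. This route faces real obstructions: the center is selected according to its degree (a conditioning problem), the ball contains ancestors and their other progeny rather than only descendants, and a crude $(\text{max degree})^{r_n}$ bound is not obviously below $b_n$ when $\alpha$ is small. The paper sidesteps all of this with a simple containment: on the event $\{v_{max}(n)\in B_n(v_0,r_n)\}$ one has $B_n(v_{max}(n),r_n)\subseteq B_n(v_0,2r_n)$, so it suffices to bound the ball of radius $R_n=2r_n$ around the \emph{root}. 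That is done by a first-moment bound on generation sizes (Lemma \ref{profile}): $E[Z_k(t)]=F^{k*}(t)$ and $\int_0^\infty e^{-\theta t}\,dE[Z_k(t)]=\hat{\mu}(\theta)^k$ give $E[Z_{\le \lfloor R_n\rfloor}(\breve\sigma_n)]\le c(x_n)\exp(\tilde\alpha^*(x_n)\breve\sigma_n)$ with $x_n=R_n/\breve\sigma_n$, and Lemma \ref{alphax} (this is where $f(i)\le C_0 i^\alpha$ enters) yields $\tilde\alpha^*(x)\le Cx-\tfrac{2}{1-\alpha}x\log x$, after which Markov's inequality and the upper tail of $T_n$ from Lemma \ref{Tbounds} produce exactly the exponent $\tfrac{8}{1-\alpha}r_n\log\bigl(\tfrac{\log n}{2\lambda^* r_n}\bigr)$ in $b_n$ with an error probability $\le C\exp\bigl(-\tfrac{f_*}{2}\KK(\tfrac1{\lambda^*}\log n)\bigr)$. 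Without this containment-plus-expected-generation-size mechanism (or a worked-out substitute), your volume step does not go through, and it is precisely this step that determines both $b_n$ and the stated threshold in $\ep$.
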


In words, the above theorem says that, with high probability, the root lies within a neighborhood of radius $r_n$ around the maximal degree vertex and the size of this (confidence) set is at most $b_n$. For arbitrary $\ep>0$, it also gives a lower bound on the network size for the root to lie in this set with probability at least $1-\ep$. This lower bound is especially useful in the non-persistent regime as the size of the confidence set grows with network size and this makes the result applicable for fixed large $n$.

\begin{remark}\label{nprem}
We make the following comments.

(a) To compute the above confidence set for the root, one only needs to know the maximal degree vertex (which can be obtained from the empirical degree distribution) and the network in a neighborhood of radius $r_n$ around this vertex. Under the hypotheses of Theorem \ref{nonpersistent}, $r_n/\log n \rightarrow 0$ as $n \rightarrow \infty$, by \eqref{ktozero}. As the diameter of $\GG_n$ (same order as the height) is typically $O(\log n)$ \cite[Theorem 13.2]{HJarxiv}, the radius of this neighborhood is much smaller than the network diameter.\\

(b) For any $\alpha \in (0, 1/2)$, taking $f(k) = k^{\alpha}$ with $k \ge 1,$ it is easy to verify that $\KK(t) = O(t^{(1-2\alpha)/(1-\alpha)})$. Thus, $r_n = O((\log n)^{(1-2\alpha)/(1-\alpha)})$. Moreover, $b_n$, which can be seen as the size of the confidence set which contains the root with high probability, is of order $$\exp\left(O\left((\log n)^{(1-2\alpha)/(1-\alpha)} \log \log n\right)\right),$$ which grows smaller than any positive power of $n$. 
 
 For $\alpha = 1/2$, $\KK(t)=O(\log t)$ and thus $r_n = O(\log\log n)$, $b_n = \exp\left(O\left((\log n) \log \log n\right)\right)$. Further, as $\alpha$ approaches $0$, $r_n$ gets closer to the diameter $\log n$ whereas, for any $\alpha > 1/2$, $r_n = O(1)$ (note that $r_n$ is well-defined for any $\alpha \in [0,1]$). Thus, Theorem \ref{nonpersistent} `interpolates' between the persistent regime and the uniform attachment model (which can be seen as the `worst case' for root detection using empirical degree distribution and local information).\\

(c) A natural question that arises is whether an appropriately constructed `purely local' centrality measure $\Psi$ (the $\Psi$-value of each vertex requires knowledge of the network only in an $O(1)$ radius) can be shown to be persistent in the (degree) non-persistent regime. This, in turn, would lead to a stable budget $K_{\Psi}(\ep)$ that does not depend on network size.  
 
Preliminary calculations show that natural guesses like the `$j$-hop degree centrality measure', defined as the sum of degrees of all vertices in a neighborhood of radius $j$ around a vertex, will not work for any fixed $j \ge 1$. To see this in the setup of (b) above for $j=1$, one can study the evolution of the sum of degrees in a $1$-neighborhood of a vertex through the continuous time processes
 \small
 $$
\xi^{(2)}_A(t) = 2 \xi_A(t) + \sum_{i=1}^{ \xi_A(t)}\xi_{0, i}(t - \sigma_{A,i}), \ t \ge 0, \ A \in \mathbb{N}_0,
 $$ 
 \normalsize
where $\xi_A$ is defined after \eqref{pprocess}, $\sigma_{A,i}$ are the event times in $\xi_A$, and $\xi_{0, i}$ are i.i.d. copies of $\xi$ defined in \eqref{pprocess}. Suppose at some time $T$, a new vertex $v$ enters the network when the root has a large degree $A$. As $\Omega(A)$ of its neighbors have comparable $\Omega(A)$ degrees, with high probability, the sum of degrees in the $1$-neighborhood around the root at time $T$ is $\ge \eta A^2$, for some small $\eta>0$. Then, $1+\xi^{(2)}_0(t)$ and $\eta A^{2} + \xi^{(2)}_A(t)$ respectively denote the sum of degrees in the $1$-neighborhood of $v$ and a lower bound on the corresponding quantity for the root, at time $t+T$. The techniques in \cite{BBpersistence} can be used to observe that persistence requires, for sufficiently large $A$, with high probability, $\eta A^{2} + \xi^{(2)}_A(t) > 1 + \xi^{(2)}_0(t)$ for all $t \ge 0$, for sufficiently small $\eta>0$.
It can be shown (using exact computation or the martingales defined in Section \ref{mardeg}) that, for any $A \in \mathbb{N}$, $E\left(\xi^{(2)}_A(t) - \xi^{(2)}_0(t)\right) = O(t^{(1+\alpha)/(1-\alpha)})$ whereas $Var\left(\xi^{(2)}_A(t) - \xi^{(2)}_0(t)\right) \ge Ct^{3/(1-\alpha)}$ for large $t$, for some $C>0$. This indicates that, for large $t$, the fluctuations of $\eta A^{2} + \xi^{(2)}_A(\cdot) -1 - \xi^{(2)}_0(\cdot)$ are of a higher order than its drift, and hence the associated sum of degrees processes cross each other infinitely often, implying lack of persistence.

 We will investigate this question in broader generality in a future article.
\end{remark}

\begin{remark}[Degree evolution of fixed vertices and maximum degree asymptotics]\label{da}
The approach of \cite{BBpersistence} and the current article can, in addition, be used to furnish, for any attachment function satisfying Assumptions (A1)-(A3), asymptotics of the degrees of the first $k$ vertices of the network for any finite $k$, and the maximum degree, as the network grows. In particular, it follows from \cite{BBpersistence} (see Theorem 4.9 and equation (8.23) there) that in the persistent regime ($\Phi_2(\infty)< \infty$), with $m_i\equiv 1$, the degree $d_k(n)$ of the $k$-th vertex when the network has $n \ge k$ edges satisfies
$$
d_k(n) = \Phi_1^{-1}\left(\frac{1}{\lambda^*}\log n + X_k(n)\right)
$$
where $X_k(n)$ converges almost surely to some finite random variable $X_k(\infty)$ as $n \rightarrow \infty$.  A similar statement holds for the maximum degree $d_{max}(n)$ as described in \cite[Theorem 4.9]{BBpersistence}. For linear $f$, the non-tree network, and asymptotics on it, can be obtained from the tree network by a collapsing procedure \cite{garavaglia2018trees,banerjee2023local}. In this case, $\Phi_1$ is logarithmic and this implies that $n^{-1/\lambda^*}d_k(n)$ and $n^{-1/\lambda^*}d_{max}(n)$ converge to non-degenerate random variables, as was noted in detail in several previous works (eg. \cite{pekoz2017joint,senizergues2021geometry}). With some more work, it should be possible to extend the above convergence to joint convergence for any finite collection of vertices, for any attachment function satisfying Assumptions (A1)-(A3).  For general $f$, characterizing the joint distribution of $\{X_k(\infty) : k \in \mathbb{N}_0\}$, the limiting rescaled maximum degree, and obtaining results in the non-tree case, are interesting open questions.

In the tree case for the non-persistent regime ($\Phi_2(\infty)= \infty$), when $\mathcal{K}(\cdot)$ satisfies a `slow-variation' type assumption, namely $\lim_{\delta \downarrow 0}\limsup_{t \rightarrow \infty} \frac{\mathcal{K}((1+\delta)t)}{\mathcal{K}(t)}=1$, there is a stark difference between the degree growth of fixed vertices and the maximum degree as the network size increases. From \cite[Lemma 7.9]{BBpersistence}, and Lemma \ref{ctbpem} and \eqref{tnlim} below, it can be shown that for any fixed $k$,
$$
\frac{\Phi_1(d_k(n)) - \frac{1}{\lambda^*}\log n}{\sqrt{\mathcal{K}\left(\frac{1}{\lambda^*}\log n\right)}} \Rightarrow N(0,1) \ \ \text{as } \ n \rightarrow \infty.
$$
However, from \cite[Theorem 4.12]{BBpersistence},
$$
\frac{\Phi_1(d_{max}(n)) - \frac{1}{\lambda^*}\log n}{\mathcal{K}\left(\frac{1}{\lambda^*}\log n\right)} \xrightarrow{P} \frac{\lambda^*}{2} \ \ \text{as } \ n \rightarrow \infty.
$$
We hope to explore these questions in more detail in future work.
\end{remark}

\subsection{Outline of proof techniques} We end this section with an outline of techniques used to prove our main results.
\begin{enumerate}
\item In Section \ref{BPpre}, we discuss embeddings of our discrete time network process into continuous time branching processes where individuals reproduce according to point processes with inter-arrival times having means determined by the attachment function. Among other things, a key technical advantage of such embeddings is that, informally speaking, they allow for a `time-changed' interpretation of the process which enables us to bypass the random denominator appearing in the network attachment probabilities \eqref{attprobab}. In the tree case, this embedding is standard and goes by the name of Athreya--Karlin embedding. For the non-tree case, we introduce a novel embedding into a collapsed branching process (CBP) formed by collapsing batches of individuals into single vertices, appropriately preserving the connectivity structure. Such CBPs have appeared recently \cite{garavaglia2018trees,banerjee2023local}, but our CBP differs from them and allows for an exact embedding of the network for all attachment functions (as opposed to just the linear and uniform case in \cite{garavaglia2018trees,banerjee2023local}). Moreover, this embedding lets us bypass the monotonicity assumptions on $f$ in the non-tree case required in several prior works (eg. \cite{BBpersistence}).

For these branching processes, we discuss refined quantitative estimates on hitting times and distributional properties of limits of renormalized population sizes (Section \ref{ctbpas}), asymptotics of the age of maximal degree vertices (Section \ref{agesec}) and height of the skeleton of the branching process (Section \ref{htsec}). Although we build on several prior works (discussed in respective sections), the obtained estimates are mostly new, are of independent interest, and form the backbone of our proofs.\\

\item In Section \ref{mardeg}, we recall some large deviations properties of a class of martingales (defined in \eqref{madef}) from \cite{BBpersistence} which have proved to be a crucial tool in analyzing the degree evolution of fixed vertices and competition between their degrees as the network grows.\\

\item In Section \ref{lowerunif}, a universal strategy to obtain budget lower bounds is described where the probability of the root being in the confidence set is lower bounded in terms of an event involving the root having an `unusually low' degree in the network (see Lemma \ref{lbpart1}).\\ 

\item The proofs of Theorems \ref{linearattach} and \ref{general} are completed in Sections \ref{linsec} and \ref{gensec}. The strategy discussed above, along with the obtained branching process estimates, is used in obtaining budget lower bounds. Obtaining budget upper bounds involves controlling the probabilities of two events: $A_n$ (see \eqref{A_n}) being the event that the root degree grows slower than a certain rate in the network size, and $E_n \cap A_n$, where $E_n$ (see \eqref{E_n}) is the event where the degree of the root is eventually exceeded by that of the $(n+1)$-th connecting vertex as the network grows. The former is bounded using branching process estimates and the later is bounded using the martingale large deviation results from Section \ref{mardeg}.\\

\item Finally, in Section \ref{nonsec}, Theorem \ref{nonpersistent} is proved by combining the estimates on hitting times, age of the maximal degree vertex and height of the branching process with Laplace transform asymptotics of the associated reproduction point process.
\end{enumerate}

\section{Branching process embedding }\label{BPpre}

\mn
\textbf{Branching process embedding in the case $m_i \equiv 1$.} A crucial technique to our proofs is an appropriate embedding of the discrete dynamics described in Section \ref{modeldef} into a continuous time branching process. 

\begin{definition}\label{CTBP}
A continuous time branching process (CTBP) driven by $f$, written as $\{BP_f(t): t\geq 0\}$, is defined to be a branching process started with one individual at time $t=0$ and such that every individual in this system has an offspring distribution that is an independent copy of the point process $\xi$ defined in \eqref{pprocess}.
\end{definition}
We will denote the number of individuals (size) of the branching process $BP_f(t)$ at time $t$ by $|BP_f(t)|$.

Let $BP^{(0)}_f(\cdot), BP^{(1)}_f(\cdot)$ be two independent branching processes driven by $\xi$ as in Definition \ref{CTBP} started from $v_0$ and $v_1$ respectively. Define $\widetilde{BP}_f(t):=BP^{(0)}_f(t)\cup BP^{(1)}_f(t)$. 

For $n\geq 2$, we label the vertices in $\widetilde{BP}_f(t)$ in the order of their arrivals into the system.  For $i=0,1$, $BP^{(i)}_f(t)$ can be viewed as a labelled tree rooted at $v_i$ with edges between parents and their offspring. Upon adding an edge between $v_0$ and $v_1$, $\widetilde{BP}_f(t)$ becomes a tree, and we assign its root to be $v_0$. Define the sequence of stopping times
\beq\label{Tdef}
T_l:=\inf\{ t\geq 0: |\widetilde{BP}_f(t)|=l+1\}, \quad\quad l\geq 1,
\eeq
that is, the arrival time of $v_l$ into the system.  Note that $T_1=0$. The following property follows from properties of Exponential distributions, and is the starting point of the Athreya--Karlin embedding \cite{athreya1968}.
\begin{lemma}\label{ctbpem}
Fix the attachment function $f$. Let $\{\GG_l : l\geq 1\}$ be the sequence of random trees constructed in Section \ref{modeldef} using $f$ as the attachment function and with $m_i \equiv 1$, and let $\{\widetilde{BP}_f(t):t\geq 0\}$ be the CTBP constructed as above. Then we have 
$$\{\widetilde{BP}_f(T_l), l\geq 1\} \overset{d}{=}\{ \GG_l: l\geq 1\}.$$
\end{lemma}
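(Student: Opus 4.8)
The plan is to recognize $\{\widetilde{BP}_f(T_l):l\ge 1\}$ --- viewed as a growing sequence of finite trees, labelled by order of arrival and rooted at $v_0$ --- as the discrete-time Markov chain whose one-step transition is ``add a new vertex and join it to an existing vertex $v$ with probability proportional to $f(\deg(v))$''. Since $\{\GG_l:l\ge1\}$ is, by its construction in Section \ref{modeldef} with $m_i\equiv 1$, exactly that Markov chain started from $\GG_1$, once we also check $\widetilde{BP}_f(T_1)=\GG_1$ the equality in law of the full sequences follows by induction on $l$.

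The key computation is a matching of reproduction rates with degrees. In a CTBP driven by $f$, an individual which currently has $c\ge 0$ offspring produces its $(c+1)$-th offspring after an additional amount of time distributed as $\sigma_{c+1}-\sigma_c=E_{c+1}/f(c+1)$, i.e.\ exponential with rate $f(c+1)$ and independent of $E_1,\dots,E_c$; by memorylessness this remains true for the \emph{residual} time given any information about the past. In $\widetilde{BP}_f(t)=BP^{(0)}_f(t)\cup BP^{(1)}_f(t)$ with the extra edge joining $v_0,v_1$, each vertex $v_i$ with $i\ge2$ carries exactly one parent edge, so its degree equals $c_i+1$ with $c_i$ its current offspring count, while $v_0$ and $v_1$ each carry one such extra edge in place of a parent edge, so their degrees are again $c_0+1$ and $c_1+1$. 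Hence for \emph{every} vertex in $\widetilde{BP}_f(t)$ its instantaneous reproduction rate equals $f(\text{its current degree})$. Moreover $\widetilde{BP}_f(0)$ consists of $v_0,v_1$ joined by a single edge, which is $\GG_1$, and $T_1=0$.

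Now condition on $\widetilde{BP}_f(T_l)$, which has vertices $v_0,\dots,v_l$ with degrees $d_i:=\deg(v_i)$. By the memorylessness just noted, applied simultaneously to all $l+1$ individuals and using the branching (Markov) structure to get independence across individuals and from the past, the residual times to the next reproduction of $v_0,\dots,v_l$ are independent, the one for $v_i$ being exponential with rate $f(d_i)$. The first of these to ring determines $T_{l+1}$, it is the one for $v_i$ with probability $f(d_i)/\sum_{j=0}^{l}f(d_j)$, and the new individual $v_{l+1}$ is then an offspring of that $v_i$; thus passing from $\widetilde{BP}_f(T_l)$ to $\widetilde{BP}_f(T_{l+1})$ attaches $v_{l+1}$ to $v_i$ with probability $f(d_i)/\sum_{j=0}^{l}f(d_j)$ and leaves the rest of the tree unchanged. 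This is precisely the law of the transition $\GG_l\to\GG_{l+1}$. Since the two chains also agree at $l=1$, induction yields $\{\widetilde{BP}_f(T_l):l\ge1\}\overset{d}{=}\{\GG_l:l\ge1\}$.

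The one step needing care --- the main, if modest, obstacle --- is the rigorous use of memorylessness in the previous paragraph: one must argue that $\widetilde{BP}_f(\cdot)$ is a continuous-time Markov chain on the countable set of finite rooted labelled trees with jump rate $v\mapsto f(\deg(v))$, the point being that the tree itself records each individual's offspring count and that the inter-birth gaps $E_{c+1}/f(c+1)$, although not i.i.d.\ in $c$, are each exponential, so their residuals depend on the past only through the current state. One also notes that $T_l<\infty$ a.s.\ for every $l$, which is immediate since $|\widetilde{BP}_f(t)|$ stochastically dominates a Yule process (each individual reproduces at rate at least $f_*>0$, by Assumption (A1)). The rest is bookkeeping.
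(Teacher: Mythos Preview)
Your argument is correct and is precisely the Athreya--Karlin embedding argument the paper invokes; the paper does not give a proof beyond citing \cite{athreya1968} and remarking that the result ``follows from properties of Exponential distributions,'' so you have simply written out the details the paper leaves implicit. One small quibble: the finiteness of $T_l$ really only needs that $f$ is strictly positive (built into $f:\mathbb{N}\to(0,\infty)$), not Assumption (A1) per se, but your domination-by-a-Yule-process remark is fine once $f_*>0$ is granted.
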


\noindent\textbf{Collapsed branching process embedding in the case $m_i \equiv m >1$.}
The embedding into a continuous time branching process does not carry over directly in the case $m_i \equiv m >1$. Nevertheless, we propose the following `continuous time interpolation' between $\GG_l$ and $\GG_{l+1}$ using an adaptive collapsing operation on a continuous time branching process. 
\begin{itemize}
\item Define the discrete time graph-valued process $\{CBP_f(l) : l \in \mathbb{N}\}$ as follows. $CBP_f(1)$ is the graph $\GG_1$ with two vertices $v_0, v_1$ connected by $m$ edges. Define $\tau_0=\tau_1 = 0$. 
\item Suppose for $l \ge 1$, we have constructed the random time $\tau_l$ and the process $\{CBP_f(j) : 1 \le j \le l\}$. To obtain $CBP_f(l+1)$, each individual $v \in CBP_f(l)$, say with degree $d(v,l)$ in $CBP_f(l)$, independently reproduces according to the point process 
$$\tilde \xi^{d(v,l)}(\cdot)\overset{d}{=}\xi_{d(v,l)-1}(t)$$
where, for any $k \in \mathbb{N}$, $\xi_{k-1}(t)$ is as defined in the line below \eqref{pprocess} with the attachment function $f^{(v)}(\cdot):=f(\cdot+k-1)$.

The newly born individuals do not reproduce until there are $m$ newborns since $\tau_l$. Define $\Theta_{l+1}$ to be the first time since $\tau_l$ when there are $m$ newly born individuals $v_{l+1,1}, \dots, v_{l+1,m}$ and set $\tau_{l+1} := \tau_l + \Theta_{l+1}$. Then we collapse the vertices $v_{l+1,1}, \dots, v_{l+1,m}$ to a single vertex $v_{l+1}$, and define $CBP_f(l+1)$ to be the graph obtained after this collapsing operation. Note that $d(v_{l+1}, l+1) = m$.
For any $j \in \mathbb{N}_0$, define the following process that represents the `continuous time' degree of $v_j$:
\begin{equation}\label{cbppoint}
\xi^{m,(v_j)}(t) := \sum_{l=j}^{\infty} \tilde \xi^{d(v_j,l)}((t \wedge \tau_{l+1}) - \tau_l), \ \ t \ge \tau_j,
\end{equation}
where we take $\tilde \xi^{d(v_j,l)}(s) =0$ for $s \le 0$. Note that $d(v_j,l) = m + \xi^{m,(v_j)}(\tau_l)$ for any $j \in \mathbb{N}_0$, $l \ge j$. 
\end{itemize}

The following lemma is easy to check using properties of Exponential distributions.

\begin{lemma}\label{cbp}
Fix an attachment function $f$ and an integer $m > 1$. Let $\{\GG_l : l\geq 1\}$ be the sequence of random trees constructed in Section \ref{modeldef} using $f$ as the attachment function and with $m_i \equiv m$, and let $\{CBP_f(l): l \in \mathbb{N}\}$ be the random graph process constructed as above. Then we have 
$$\{CBP_f(l), l\geq 1\} \overset{d}{=}\{ \GG_l: l\geq 1\}.$$
Moreover, the processes $\xi^{m,(v_j)}(\cdot \, + \, \tau_j), j \in \mathbb{N}_0,$ are independent and have the same distribution as the point process defined in \eqref{pprocess} with attachment function $f^{(m)}(k) := f(k + m-1)$ for $k \in \mathbb{N}$, that is, $\xi^{m,(v_j)}(t + \tau_j)\overset{d}{=}\xi_{m-1}(t)$, $t\geq 0$. 
\end{lemma}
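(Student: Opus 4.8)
The plan is to deduce both assertions of Lemma~\ref{cbp} from the memoryless property of the exponential distribution, in the same spirit as the Athreya--Karlin embedding underlying Lemma~\ref{ctbpem}; the only genuinely new ingredient is the bookkeeping forced by the adaptive collapsing operation. The first structural fact I would record is that collapsing never changes the degree of an already-present vertex: merging the newborns $v_{l+1,1},\dots,v_{l+1,m}$ into $v_{l+1}$ merely relabels the newborn endpoint of each incident edge, so for every $j\le l$ the degree $d(v_j,\cdot)$ increases only through births of $v_j$ itself. Hence $t\mapsto m+\xi^{m,(v_j)}(t)$, for $t\ge\tau_j$, is the counting process of a pure birth chain started from state $m$, and by memorylessness the prescription in the construction — at each restart time $\tau_l$ discard the partial progress toward $v_j$'s next offspring and re-sample an $\mathrm{Exp}(f(d(v_j,l)))$ holding time (which is exactly what $\tilde\xi^{d(v_j,l)}\overset{d}{=}\xi_{d(v_j,l)-1}$ encodes) — is distributionally identical to letting that birth process run without interruption. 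Therefore it is convenient to build the whole construction from a primitive i.i.d.\ family $\{N^{(j)}\}_{j\ge0}$ of copies of $\xi_{m-1}$, set $\xi^{m,(v_j)}(\cdot+\tau_j):=N^{(j)}$, and define the collapse times $\tau_l$ and the graphs $CBP_f(l)$ as the evident measurable functionals of these processes. Since $\tau_j$ is then a function of $(N^{(0)},\dots,N^{(j-1)})$ only, $N^{(j)}$ is independent of $\tau_j$, and the family $\{\xi^{m,(v_j)}(\cdot+\tau_j)\}_{j\ge0}$ is i.i.d.\ with law $\xi_{m-1}$; this is the second assertion.

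For the first assertion I would show by induction on $l$ that $\{CBP_f(l):l\ge1\}$ and $\{\GG_l:l\ge1\}$ agree in law, by matching initial data and one-step transition kernels. The base case holds since $CBP_f(1)=\GG_1$ by definition. For the inductive step, condition on $CBP_f(l)$, with degrees $(d(v_0,l),\dots,d(v_l,l))$. On $[\tau_l,\tau_{l+1})$ the $l+1$ present vertices race, $v_i$ carrying (freshly after each of its own births, by memorylessness) an $\mathrm{Exp}(f(\text{its current degree}))$ clock to its next offspring while the accumulating newborns stay inert. A routine race-of-exponentials computation then shows that for $1\le s\le m$ the $s$-th birth after $\tau_l$ is produced by $v_i$ with probability $f(\delta_i^{(s-1)})\big/\sum_{k=0}^{l}f(\delta_k^{(s-1)})$, where $\delta_i^{(s-1)}$ is the degree of $v_i$ after the first $s-1$ of these births — precisely the sequential, ``degree computed before the connection is made'' attachment rule of Section~\ref{modeldef}. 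Collapsing the resulting $m$ newborns into $v_{l+1}$ (which thereby has degree $m$) yields a graph whose conditional law given $CBP_f(l)$ equals that of $\GG_{l+1}$ given $\GG_l$; combined with the induction hypothesis, the Markov property gives equality of the laws of the full sequences.

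I expect the main difficulty to be one of care rather than depth. The delicate points are: (i) verifying that the re-sampling at each $\tau_l$ does not bias any offspring process, so that the construction as written really does coincide with the clean i.i.d.\ pure-birth picture; (ii) checking that the collapse leaves all existing degrees untouched; and (iii) confirming that the adaptively defined times $\tau_l$ can be realized as measurable functionals of the mutually independent processes $\{N^{(j)}\}_{j\ge0}$, which is what legitimizes the independence claim in the second assertion. A minor further point worth noting is that several of the $m$ births between $\tau_l$ and $\tau_{l+1}$ may come from the same parent vertex; this is consistent with $\GG_n$ being a multigraph and does not affect the transition-kernel identification.
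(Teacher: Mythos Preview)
Your proposal is correct and follows exactly the approach the paper indicates: the paper offers no detailed proof, stating only that the lemma ``is easy to check using properties of Exponential distributions,'' and your argument carefully unpacks precisely those properties (memorylessness at the stopping times $\tau_l$, the race-of-exponentials identification of the sequential attachment probabilities, and the observation that collapsing does not alter existing degrees). Your realization of the construction via an i.i.d.\ family $\{N^{(j)}\}$ of copies of $\xi_{m-1}$, together with the remark that $\tau_j$ is measurable with respect to $(N^{(0)},\dots,N^{(j-1)})$, is a clean way to make the independence claim rigorous.
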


The above construction is reminiscent of the collapsed continuous-time branching processes defined in \cite{garavaglia2018trees} (see also \cite{banerjee2023local}). However, the two constructions are different as the newly born vertices in $(\tau_l,\tau_{l+1})$ are not allowed to reproduce in the above construction, while they are in \cite{garavaglia2018trees}.

\mn
\textbf{Simple birth process with immigration.}
We describe one of the simple examples of continuous time branching processes which will be useful later, the simple birth process with immigration.  Let $\{Y_{\nu,\beta}(t): t\geq 0\}$ denote the simple birth process with birth rate $\nu>0$ and immigration rate $\beta>0$. Given the current population size $Y_{\nu,\beta}(t)$, the average instantaneous rate of growth is $\nu Y_{\nu,\beta}(t) + \beta$. More precisely,
\begin{align*}
P(Y_{\nu,\beta}(t+dt)-Y_{\nu,\beta}(t)=1| \FF(t))&=(\nu Y_{\nu,\beta}(t) + \beta)dt+o(dt),\\
P(Y_{\nu,\beta}(t+dt)-Y_{\nu,\beta}(t)\geq 2| \FF(t))&=o(dt),
\end{align*}
where $\{\FF(t):t\geq 0\}$ is the natural filtration of the process.

Set $Y_{\nu,\beta}(0)=1$ and define $M(\theta,t)=\mathbb{E}\left(e^{\theta Y_{\nu,\beta}(t)}\right)$ be the associated moment generating function. Then $M(\theta, t)$ satisfies 
$$\frac{\partial M}{\partial t}=\nu (e^{\theta}-1)\frac{\partial M}{\partial \theta}+\beta(e^{\theta}-1)M,$$
with initial condition $M(\theta,0)=e^\theta$ (see, e.g. \cite[Section 6.4.4]{Linda10}). The solution to the above differential equation is 
\beq\label{yulemgf}
M(\theta,t)=\frac{  e^{\theta}}{((1-e^\theta)e^{\nu t}+ e^\theta)^{1+\beta/\nu}},
\eeq
which is valid for $\theta < \log(e^{\nu t}/(e^{\nu t}-1))$.

\subsection{Asymptotics for population size and associated hitting times for CTBP}\label{ctbpas}
In this section we will collect some properties of the branching process introduced in Definition \ref{CTBP} that will be used later. 
Throughout this section, we will work under the assumptions (A1)-(A3) for $f$. The following theorem was recorded in \cite{jordan} using classical results of \cite{jagers1984growth,biggins1979continuity}.
\begin{theorem}[Theorem 3.5 in \cite{jordan}]\label{brlim}
Suppose the attachment function $f$ satisfies Assumptions (A1)-(A3). There exists a strictly positive random variable $W_\infty$ such that, with $\lambda^*$ being the Malthusian rate of growth parameter in \eqref{malthusian},
\beq
e^{-\lambda^*t}|BP_f(t)| \overset{a.s., \, \mathbb{L}^2}{\longrightarrow} W_\infty, \quad \text{ as }t\to\infty.
\eeq
Furthermore, $W_\infty$ is an absolutely continuous or singular continuous random variable (with respect to the Lebesgue measure) and is supported on all of $\RR_+$. 
\end{theorem}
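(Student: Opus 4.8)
The plan is to read the result off the classical limit theory for supercritical Crump--Mode--Jagers (general) branching processes, the only real work being to verify that Assumptions (A1)--(A3) supply exactly the regularity those theorems demand. First I would pin down the Malthusian structure. The intensity measure $\mu = E[\xi]$ has Laplace transform $\hat\mu(\theta) = \sum_{k\ge1}\prod_{i=1}^k \tfrac{f(i)}{\theta+f(i)}$, and (A1) (i.e.\ $f(i)\le C_f i$ with $f(i)\to\infty$) shows that the $k$-th summand is $\lesssim k^{-\theta/C_f}$, so $\underline\lambda \le C_f < \infty$; on $(\underline\lambda,\infty)$, $\hat\mu$ is finite, real-analytic, strictly decreasing, and $\to 0$ at $\infty$. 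Assumption (A2) then produces a unique $\lambda^*\in(\underline\lambda,\infty)$ with $\hat\mu(\lambda^*)=1$; in particular $\hat\mu$ is finite on a two-sided neighbourhood of $\lambda^*$, and (A3) controls the reproduction measure ``at the edge'' when $f$ is nearly linear. Since $f(i)\to\infty$ and $\sum_i 1/f(i)=\infty$ (both from (A1)), $S_1(\infty)=\infty$ a.s., so $\xi(t)<\infty$ for all $t$ while $\xi(\infty)=\infty$: the process is non-explosive and, as every individual a.s.\ produces infinitely many offspring, it survives forever.

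Second, I would invoke the strong law of large numbers for general branching processes (Jagers--Nerman, \cite{jagers1984growth}) with the \emph{counting characteristic} $\phi\equiv\mathbf 1_{[0,\infty)}$, for which the $\phi$-counted population is exactly $|BP_f(t)|$: the hypotheses (finiteness of $\hat\mu$ near $\lambda^*$, an $x\log x$-type integrability of $\xi$, and $\hat\phi(\lambda^*)=1/\lambda^*<\infty$) all follow from the first step, so $e^{-\lambda^* t}|BP_f(t)|\to W_\infty$ a.s., with $W_\infty = c\,\mathcal W$, $c=\hat\phi(\lambda^*)/(-\hat\mu'(\lambda^*))\in(0,\infty)$ and $\mathcal W$ the a.s.\ limit of the intrinsic (Nerman) martingale. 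To upgrade to $\mathbb L^2$ convergence I would apply Biggins-type $L^p$ criteria for this martingale (\cite{biggins1979continuity}); the required second-moment bound holds because $\hat\mu(2\lambda^*)<\hat\mu(\lambda^*)=1<\infty$ automatically ($2\lambda^*>\lambda^*$ and $\hat\mu$ is strictly decreasing where finite) and $\hat\phi(2\lambda^*)<\infty$. This also gives $\mathbb L^1$ convergence, whence $E W_\infty>0$; since $\{W_\infty=0\}$ is, by the branching property, the extinction event, which has probability $0$, we conclude $W_\infty>0$ a.s.

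Third, for the distributional regularity of $W_\infty$ I would exploit the recursive distributional equation it inherits from the branching dynamics. Splitting $BP_f(t)$ over the offspring of the root, $|BP_f(t)| = 1 + \sum_{j\ge1}|BP_f^{(j)}(t-\sigma_j)|$ with $BP_f^{(j)}$ i.i.d.\ copies of $BP_f$ independent of $(\sigma_j)_{j\ge1}$; multiplying by $e^{-\lambda^* t}$, letting $t\to\infty$ (the sum--limit interchange being justified by the $\mathbb L^1$/$\mathbb L^2$ bounds of step two), and pulling out $\sigma_1=S_1(1)\sim\mathrm{Exp}(f(1))$ yields
\begin{equation*}
W_\infty \;\overset{d}{=}\; e^{-\lambda^*\sigma_1}\,V, \qquad V := W_\infty^{(1)} + \sum_{k\ge 1} e^{-\lambda^*\widetilde\sigma_k}\,W_\infty^{(k+1)},
\end{equation*}
where $\widetilde\sigma_k$ is the $k$-th arrival time of the point process $\xi_1$ (defined after \eqref{pprocess}), the $W_\infty^{(j)}$ are i.i.d.\ copies of $W_\infty$, and $\sigma_1$ is independent of $(\widetilde\sigma_k)_k$ and of $(W_\infty^{(j)})_j$, hence of $V$, with $V\ge W_\infty^{(1)}>0$ a.s. Because $\sigma_1$ has a density supported on $[0,\infty)$ and $V$ has at most countably many atoms, for every $a>0$ the set $\{s\ge0:\, a e^{\lambda^* s}\in\mathrm{atoms}(V)\}$ is countable, hence Lebesgue-null, so $P(W_\infty=a)=\int_0^\infty f(1)e^{-f(1)s}P(V=a e^{\lambda^* s})\,ds=0$; thus $W_\infty$ has no atoms, and it is non-degenerate since a constant cannot equal $e^{-\lambda^*\sigma_1}V$ for a $V$ independent of $\sigma_1$, so $W_\infty$ is absolutely continuous or singular continuous. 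For the support, $P(\sigma_1>s)>0$ for all $s$ forces $\inf\mathrm{supp}(W_\infty)=0$; the identity $\mathrm{supp}(W_\infty)=\overline{(0,1]\cdot\mathrm{supp}(V)}$ together with $\mathrm{supp}(V)\supseteq\mathrm{supp}(W_\infty)$ (valid because the second summand of $V$, by the same factoring argument, has $0$ in its support) shows $\mathrm{supp}(W_\infty)=[0,M]$ for some $M\in(0,\infty]$; ruling out $M<\infty$, i.e.\ the boundedness of $W_\infty$, by iterating the recursive equation then gives $\mathrm{supp}(W_\infty)=\RR_+$.

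The main obstacle is not the convergence itself --- once the hypotheses are verified, the a.s.\ and $\mathbb L^2$ statements are a black-box application of \cite{jagers1984growth,biggins1979continuity} --- but two subtler points. One is checking, with \emph{only} Assumptions (A1)--(A3), that $\lambda^*$ lies strictly inside the domain of finiteness of $\hat\mu$ (which is precisely what makes both the $x\log x$ condition and the $\mathbb L^2$ second-moment bound available) and that the embedded generation structure meets the branching-random-walk hypotheses of Biggins. The other, and the genuinely new part beyond classical theory (this is where \cite{jordan} does work, later extended in Lemma~\ref{lefttail}), is the regularity of $W_\infty$: the no-atom conclusion drops out cleanly from the scaling identity above, but establishing full support $\RR_+$ --- equivalently, ruling out both atoms \emph{and} gaps and showing $W_\infty$ is unbounded --- requires exploiting the recursive distributional equation with care.
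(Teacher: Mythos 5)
You have reconstructed the same route the paper intends: this theorem is not proved in the present article at all, but quoted from \cite{jordan}, which (as the sentence preceding the statement says) assembles it from the classical results of \cite{jagers1984growth,biggins1979continuity} — CMJ strong law with the counting characteristic for the a.s.\ limit, a Biggins-type criterion for $\mathbb{L}^2$ convergence, and the recursive distributional equation \eqref{RDE} for the regularity and support of $W_\infty$. In outline your proposal matches that program.

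There is, however, a concrete gap in your justification of the $\mathbb{L}^2$ step. The moment hypothesis actually required is $E\bigl[\bigl(\int_0^\infty e^{-\lambda^* t}\,\xi(dt)\bigr)^2\bigr]=E\bigl[\bigl(\sum_{i\ge1} e^{-\lambda^*\sigma_i}\bigr)^2\bigr]<\infty$, and this is neither the same as, nor implied by, $\hat\mu(2\lambda^*)<1$: the latter equals $E\sum_i e^{-2\lambda^*\sigma_i}$ and omits all the cross terms $E\,e^{-\lambda^*(\sigma_i+\sigma_j)}$, which can diverge even when $\hat\mu$ is finite on all of $(0,\infty)$. So the second-moment bound is not ``automatic'' from $\lambda^*$ lying strictly inside the finiteness domain of $\hat\mu$, contrary to what your closing paragraph asserts; this is exactly where Assumption (A3) does real work, via control of the moments of $\hat\xi(\lambda^*)=\sum_i e^{-\lambda^*\sigma_i}$ (indeed \cite[Proposition 5.7]{jordan} gives $E e^{\delta \hat\xi(\lambda^*)}<\infty$, a fact this paper itself invokes in the proof of Lemma \ref{Tbounds}). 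Two smaller points: your scaling argument factoring out $\sigma_1$ does yield atomlessness, but atomlessness alone does not give the stated dichotomy ``absolutely continuous or singular continuous'' (that requires a pure-type argument for fixed points of the smoothing transform, in the spirit of \cite{Liu2001} and the treatment in \cite{jordan}); and the full-support/unboundedness step is only sketched — the standard way to rule out a finite essential supremum $M$ is to use \eqref{RDE} directly, noting that with positive probability at least two children are born before a small time $\epsilon$ and both carry values near $M$, which pushes mass above $M$.
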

Taking $t= T_n$, where $T_n$ is defined in \eqref{Tdef}, the above theorem implies that 
\begin{equation}\label{tnlim}
T_n-\frac{1}{\lambda^*}\log n \overset{a.s.}{\to}-\frac{1}{\lambda^*} \log W_\infty \quad \text{ as }n\to\infty.
\end{equation}

The following lemma gives estimates for the tail behavior of the distribution of $W_{\infty}$ near zero.
\begin{lemma}\label{lefttail}
Let $W_\infty$ be as in Theorem \ref{brlim}. For any $a \in  [0, f(1)/ \lambda^*)$, there exists $C_a>0$ such that
$$P(W_\infty \leq x)\leq  C_a \, x^{a}, \ x > 0.$$

\end{lemma}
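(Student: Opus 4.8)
The plan is to exploit the fixed-point / recursive distributional equation satisfied by $W_\infty$ coming from the first split of the branching process, together with the classical Kesten--Stigum-type convergence in Theorem \ref{brlim}. Decompose $BP_f(t)$ along the offspring of the root: if $\sigma_1 \le \sigma_2 \le \cdots$ are the arrival times of the root's children (the atoms of $\xi$), then each child $i$ starts an independent copy of $BP_f(\cdot)$, and the root itself continues to reproduce according to the shifted point process. Letting $t \to \infty$ and using $e^{-\lambda^* t}|BP_f(t)| \to W_\infty$, one obtains a distributional identity of the schematic form
$$
W_\infty \;\overset{d}{=}\; \sum_{i \ge 1} e^{-\lambda^* \sigma_i}\, W_\infty^{(i)},
$$
where $W_\infty^{(i)}$ are i.i.d. copies of $W_\infty$, independent of the arrival times $\{\sigma_i\}$. (One has to be slightly careful: strictly, one should split after the first arrival, writing $W_\infty \overset{d}{=} e^{-\lambda^* \sigma_1} W_\infty^{(1)} + W_\infty'$ where $W_\infty'$ is the limit for the residual process started by the root after its first birth; but either way one gets a bound of the form $W_\infty \overset{d}{\ge} e^{-\lambda^* \sigma_1} \widetilde W_\infty$ with $\widetilde W_\infty \overset{d}{=} W_\infty$ independent of $\sigma_1$.) Since $\sigma_1 = E_1/f(1)$ is exponential with rate $f(1)$, independent of $\widetilde W_\infty$, this already gives the key stochastic domination
$$
W_\infty \;\overset{d}{\ge}\; e^{-\lambda^* E_1/f(1)}\, \widetilde W_\infty.
$$

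From here I would run a bootstrap on the Laplace transform, or equivalently on the left-tail exponent. Define $g(x) := P(W_\infty \le x)$. From the domination above,
$$
g(x) \;\le\; P\!\left(e^{-\lambda^* E_1/f(1)}\,\widetilde W_\infty \le x\right) \;=\; E\!\left[\,g\!\left(x\, e^{\lambda^* E_1/f(1)}\right)\right],
$$
where I used independence and conditioned on $E_1$. Now suppose inductively that $g(x) \le C\, x^{b}$ for all $x>0$ for some exponent $b \ge 0$; plugging in and using $E[e^{b \lambda^* E_1/f(1)}] = \big(1 - b\lambda^*/f(1)\big)^{-1}$, which is finite precisely when $b < f(1)/\lambda^*$, we get $g(x) \le C\,\big(1-b\lambda^*/f(1)\big)^{-1} x^{b}$ — the same exponent, not an improved one. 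So a single step is not enough; instead I would iterate the splitting more times (split along the first $k$ arrivals of the root, or equivalently look at the $k$-fold recursion) to get
$$
W_\infty \;\overset{d}{\ge}\; \sum_{i=1}^{k} e^{-\lambda^* \sigma_i}\, W_\infty^{(i)} \;\overset{d}{\ge}\; e^{-\lambda^* \sigma_k}\sum_{i=1}^{k} W_\infty^{(i)},
$$
and then use the fact that $\sum_{i=1}^k W_\infty^{(i)}$ has a much thinner left tail than a single copy: a sum of $k$ i.i.d. nonnegative variables with $P(W \le x) \le C x^b$ satisfies $P(\sum_{i=1}^k W^{(i)} \le x) \le (Cx^b/k^b)^k \cdot (\text{const})$ roughly, i.e. tail exponent $kb$. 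Since $\sigma_k = \sum_{i=1}^k E_i/f(i) \le \sigma_1 k$-ish but more usefully $\sigma_k$ has exponential-type upper tail with rate governed by $f(1)$ (the smallest $f$-value), conditioning on $\sigma_k$ and optimizing, one upgrades the admissible exponent from just below $f(1)/\lambda^*$ all the way — actually the clean statement is: for the target exponent $a < f(1)/\lambda^*$, choose $b$ slightly larger (still $< f(1)/\lambda^*$ is not required at the intermediate stage if we are careful — only the final exponent $a$ must satisfy $a < f(1)/\lambda^*$ because that is where $E[e^{a\lambda^* E_1/f(1)}]$ blows up), and close the induction. Concretely I would instead argue directly: it suffices to show $E[W_\infty^{-a}] < \infty$ for all $a < f(1)/\lambda^*$, since then Markov gives $P(W_\infty \le x) = P(W_\infty^{-a} \ge x^{-a}) \le E[W_\infty^{-a}]\, x^{a}$. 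And $E[W_\infty^{-a}] \le E[e^{a\lambda^* \sigma_1}]\, E[\widetilde W_\infty^{-a}]$ from the domination; if $E[W_\infty^{-a}]$ were finite this is circular, so instead I take logs / use that $E[e^{a\lambda^*\sigma_1}] = (1 - a\lambda^*/f(1))^{-1} < 1 \cdot \infty$ — hmm, that constant exceeds $1$, so this does not immediately close either. The right device is the $k$-fold version: $E[W_\infty^{-a}] \le E[e^{a\lambda^*\sigma_k}]\, E[(\sum_{i=1}^k W_\infty^{(i)})^{-a}]$, and for $k$ large, $(\sum_{i=1}^k W^{(i)})^{-a} \le k^{-a} \cdot \frac{1}{k}\sum (W^{(i)})^{-a}$ is false in general, but $E[(\sum_{i=1}^k W^{(i)})^{-a}]$ does tend to $0$ like $k^{-a}$ if $W$ has any positive power finite; bootstrapping from $a$ small (where finiteness is elementary, e.g. $a<1$ via an explicit lower bound on $W_\infty$ by a birth process using Assumption (A1), cf. the simple birth process with immigration $Y_{\nu,\beta}$ whose limit has known negative moments from \eqref{yulemgf}) up to any $a < f(1)/\lambda^*$ in finitely many doubling steps, each step multiplying the admissible exponent, and using $E[e^{a\lambda^*\sigma_k}] = \prod_{i=1}^k (1 - a\lambda^*/f(i))^{-1} < \infty$ since $f(i)\to\infty$ by (A1) so only finitely many factors matter and they are all finite as long as $a\lambda^* < f(1) \le f(i)$.

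So the skeleton is: (1) derive the recursive distributional stochastic-domination identity for $W_\infty$ from Theorem \ref{brlim} by splitting at the first $k$ births of the root and using independence of subtrees; (2) establish a crude base case $E[W_\infty^{-a_0}] < \infty$ for some small $a_0 > 0$, e.g. by stochastically dominating $BP_f(t)$ below by a Yule/simple-birth process — using $f(i) \ge f_*$ and monotone rate comparisons — whose limit's negative moments are computable from \eqref{yulemgf}; (3) run the bootstrap: from $E[W_\infty^{-b}]<\infty$ deduce $E[W_\infty^{-b'}]<\infty$ for a strictly larger $b'$ (e.g. $b' \approx 2b$) via the $k$-fold split with $k$ chosen so that the left tail of $\sum_{i=1}^k W_\infty^{(i)}$ is thin enough, while keeping $E[e^{b'\lambda^*\sigma_k}] < \infty$ which holds precisely when $b'\lambda^* < f(1)$; (4) after finitely many iterations conclude $E[W_\infty^{-a}]<\infty$ for every $a < f(1)/\lambda^*$, and finish by Markov's inequality with $C_a = E[W_\infty^{-a}]$.

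The main obstacle I anticipate is step (3), making the bootstrap genuinely gain exponent rather than merely reproduce it: the naive one-step recursion is exponent-preserving, so one must quantitatively use that summing $k$ independent copies sharpens the left tail by a factor $k$ in the exponent, and couple this with a careful estimate of $E[e^{a\lambda^*\sigma_k}] = \prod_{i=1}^{k}(1-a\lambda^*/f(i))^{-1}$, showing it stays bounded (uniformly in the relevant range) because $f(i)\to\infty$ makes all but finitely many factors close to $1$. Handling the constant $C_a$'s blow-up as $a \uparrow f(1)/\lambda^*$ (it must, since $E[e^{a\lambda^*\sigma_1}]\to\infty$ there) is consistent with the statement, which only claims a bound for each fixed $a$ strictly below the threshold. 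The base case (2) is routine given the branching-process comparison tools already invoked in the paper.
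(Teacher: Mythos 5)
Your route (negative moments of $W_\infty$ via the recursive equation \eqref{RDE}, a $k$-fold splitting, then Markov's inequality) is genuinely different from the paper's, which works directly with the Laplace transform $\phi(t)=Ee^{-tW_\infty}$: it uses the functional equation \eqref{funceq}, damps all but one factor by $2^{-N_\delta+1}$ where $N_\delta=\#\{i:\,e^{-\lambda^*\sigma_i}>\delta\}$, applies the quoted lemma of \cite{Liu2001} to get $\phi(t)=O(t^{-a})$, and finishes with a Chernoff bound. As written, however, your proposal has two genuine gaps. The first and more serious one is the base case of the bootstrap: you need \emph{some} $a_0>0$ with $P(W_\infty\le x)\le Cx^{a_0}$ (equivalently $E[W_\infty^{-a_0}]<\infty$) before the $k$-fold step can gain anything, and the route you sketch --- bounding $BP_f$ from below by a simple birth process with rate $f_*$ and reading off negative moments from \eqref{yulemgf} --- yields nothing: that comparison process has Malthusian rate $f_*$, which is strictly smaller than $\lambda^*$ for non-constant $f$ (and (A1) forces $f$ to be non-constant), so $e^{-\lambda^*t}$ times the lower-bounding population tends to $0$ and gives no control on the left tail of $W_\infty$. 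Knowing $P(W_\infty>0)=1$ does not by itself imply any polynomial decay at $0$; manufacturing that decay is exactly the hard part of the lemma, and it is what the paper's $N_\delta$-damping plus Liu's lemma accomplishes. So step (2), which you call routine, is precisely where the argument is missing.

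The second gap concerns the claimed range of exponents. Your $k$-fold step requires $E[e^{a\lambda^*\sigma_k}]=\prod_{i=1}^{k}(1-a\lambda^*/f(i))^{-1}<\infty$, i.e.\ $a\lambda^*<\min_{1\le i\le k}f(i)$; your justification ``$a\lambda^*<f(1)\le f(i)$'' silently assumes $f$ attains its minimum at $1$, which is not part of Assumptions (A1)--(A3). For non-monotone $f$ your argument therefore only reaches $a<\inf_i f(i)/\lambda^*$, which can be strictly below the threshold $f(1)/\lambda^*$ in the statement. The paper's proof avoids this because it only ever uses the exponential moment of $\sigma_1$ (through $E[A_1^{-a}]=E[e^{a\lambda^*\sigma_1}]$, finite exactly when $a<f(1)/\lambda^*$), the remaining offspring entering only through the almost-sure divergence of $N_\delta$ as $\delta\downarrow 0$. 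If you repair the base case (essentially by an argument of the paper's type) and restrict to nondecreasing $f$, your bootstrap does close --- indeed one $k$-fold step with $k$ large upgrades any positive base exponent to every $a<f(1)/\lambda^*$, and Markov then gives $C_a=E[W_\infty^{-a}]$ --- but as proposed it does not prove the lemma in its stated generality.
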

The proof of the above lemma relies on the crucial fact that the random variable $W_{\infty}$ satisfies the following recursive distributional equation:
\begin{equation}\label{RDE}
W_{\infty} \stackrel{d}{=} \sum_{i=1}^{\infty} e^{-\lambda^* \sigma_i} W_{\infty,i},
\end{equation}
where $\{\sigma_i\}_{i \ge 1}$ are the arrival times of the point process $\xi$ defined in \eqref{pprocess}, $\{W_{\infty,i}\}_{i \ge 1}$ are i.i.d copies of $W_{\infty}$ that are independent of $\{e^{-\lambda^* \sigma_i}\}_{i \ge 1}$. In particular, the moment generating function $\phi(t)=Ee^{-tW_\infty}, \, t \ge 0,$ of $W_{\infty}$ satisfies
\beq\label{funceq}
\phi(t)=E\prod_{i\geq 1}\phi(A_i t)
\eeq
where $A_i=e^{-\lambda^*\sigma_i}$. These recursive identities follow directly from Theorem \ref{brlim} and the dynamics of the branching process $BP_f$. The proof of Lemma \ref{lefttail} will require the following lemma from \cite{Liu2001}.
\begin{lemma}[\cite{Liu2001}]
Let $\phi:\RR_+\to \RR_+$ be a bounded function and let $A$  be a positive random variable such that for some $0<p<1$, $t_0\geq 0$ and all $t>t_0$,
$$\phi(t)\leq pE\phi(At).$$
If $pE(A^{-a})<1$ for some $0<a<\infty$, then $\phi(t)=O(t^{-a}) (t\to\infty)$ and $\int_0^\infty \phi(t)t^{a-1}dt<\infty$.
\end{lemma}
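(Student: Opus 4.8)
Write $M:=\sup_{t\ge0}\phi(t)<\infty$ and $\rho:=pE(A^{-a})$; by hypothesis $\rho<1$, which in particular forces $E(A^{-a})<\infty$. The plan is to prove the integral bound first and deduce $\phi(t)=O(t^{-a})$ from it. Manipulating $\int_0^\infty\phi(t)t^{a-1}\,dt$ directly is circular since it is not a priori finite, so I would instead work with the regularizations $J_\mu:=\int_0^\infty\phi(t)t^{a-1}e^{-\mu t}\,dt$, $\mu>0$: each is finite and enjoys the crude scale-covariant bound $J_\mu\le M\Gamma(a)\mu^{-a}$. Splitting $J_\mu=\int_0^{t_0}+\int_{t_0}^\infty$, bounding the first piece by $Mt_0^{a}/a$ (using $e^{-\mu t}\le1$), applying $\phi(t)\le pE\phi(At)$ on $[t_0,\infty)$, and then using Tonelli and the substitution $r=At$, I obtain the self-referential inequality
\[
J_\mu\ \le\ \frac{Mt_0^{a}}{a}\ +\ p\,E\!\left[A^{-a}\,J_{\mu/A}\right],\qquad\mu>0.
\]
Since this holds for every positive value of the parameter, I may substitute $\mu/(A_1\cdots A_k)$ (with $A_1,A_2,\dots$ i.i.d.\ copies of $A$) and iterate, which gives for each $n$
\[
J_\mu\ \le\ \frac{Mt_0^{a}}{a}\sum_{k=0}^{n-1}\rho^{\,k}\ +\ p^{\,n}\,E\!\left[(A_1\cdots A_n)^{-a}\,J_{\mu/(A_1\cdots A_n)}\right].
\]
The remainder is at most $p^{\,n}M\Gamma(a)\mu^{-a}$, because $(A_1\cdots A_n)^{-a}J_{\mu/(A_1\cdots A_n)}\le M\Gamma(a)\mu^{-a}$ deterministically, and hence tends to $0$. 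Letting $n\to\infty$ yields $J_\mu\le Mt_0^{a}/\big(a(1-\rho)\big)$ uniformly in $\mu$, and monotone convergence as $\mu\downarrow0$ gives $\int_0^\infty\phi(t)t^{a-1}\,dt\le Mt_0^{a}/\big(a(1-\rho)\big)<\infty$.

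For the pointwise bound I would pass to the nonincreasing envelope $\phi^*(t):=\sup_{s\ge t}\phi(s)\le M$. For $s\ge t>t_0$, monotonicity of $\phi^*$ gives $\phi(As)\le\phi^*(As)\le\phi^*(At)$ pointwise, so after taking expectations and then the supremum over $s\ge t$ one finds that $\phi^*$ again satisfies $\phi^*(t)\le pE\phi^*(At)$ for $t>t_0$. Applying the integral bound just proved to $\phi^*$ gives $\int_0^\infty\phi^*(s)s^{a-1}\,ds<\infty$, and then the elementary comparison $\phi^*(2t)\big((2t)^a-t^a\big)/a\le\int_t^{2t}\phi^*(s)s^{a-1}\,ds$ together with the monotonicity of $\phi^*$ forces $\phi^*(t)=O(t^{-a})$ (indeed $t^a\phi^*(t)\to0$). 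Since $\phi\le\phi^*$, we conclude $\phi(t)=O(t^{-a})$.

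The one delicate point is closing the iteration: the averaging operator $J\mapsto pE[A^{-a}J_{\cdot/A}]$ is contractive only in the averaged sense that its multiplier $\rho$ is $<1$, while the weight $A^{-a}$ is itself unbounded, so there is no way to kill the remainder after infinitely many steps without an a priori estimate that transforms correctly under $\mu\mapsto\mu/A$. Introducing the cutoff $e^{-\mu t}$ is exactly what provides this: it makes each $J_\mu$ finite and, crucially, supplies the scale-covariant bound $J_\nu\le M\Gamma(a)\nu^{-a}$ that exactly absorbs the unbounded weight in the remainder. Everything else is routine Tonelli and change-of-variables bookkeeping with nonnegative integrands.
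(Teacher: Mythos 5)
Your proof is correct, but note that there is nothing in the paper to compare it against: the paper imports this lemma verbatim from \cite{Liu2001} and gives no proof of it, so your argument serves as a self-contained verification rather than an alternative to an internal one. The two places where such arguments usually break are handled properly here. First, the iteration: the base inequality $J_\mu\le Mt_0^a/a+pE[A^{-a}J_{\mu/A}]$ follows from Tonelli and the substitution $r=At$, and since it holds for every deterministic argument it may indeed be evaluated at the random points $\mu/(A_1\cdots A_k)$ and averaged, independence giving $p^kE[(A_1\cdots A_k)^{-a}]=\rho^k$; the remainder is killed exactly because the regularization supplies the scale-covariant bound $J_\nu\le M\Gamma(a)\nu^{-a}$, which absorbs the unbounded weight $(A_1\cdots A_n)^{-a}$ and leaves only the factor $p^n\to 0$ — this is the genuinely non-routine point and you identify it explicitly. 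Second, the passage from the integral bound to the pointwise bound via the envelope $\phi^*(t)=\sup_{s\ge t}\phi(s)$ is legitimate: $\phi^*$ is nonincreasing (hence measurable), bounded, and inherits the functional inequality because $\phi(As)\le\phi^*(As)\le\phi^*(At)$ for $s\ge t$, so Step 1 applies to it, and the comparison over $[t,2t]$ then yields $t^a\phi^*(t)\to 0$, which is even slightly stronger than the stated $O(t^{-a})$. The degenerate case $t_0=0$ is also consistent (the hypothesis then forces $\phi\equiv 0$). I see no gap.
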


\begin{proof}[Proof of Lemma \ref{lefttail}]
Since $f$ satisfies Assumption (A2) and $f(k) \rightarrow \infty$ as $k \rightarrow \infty$ by Assumption (A1), Lemma 1 of \cite{rudas2007random} gives
$$
E(\hat{\xi}(\lambda^*) \log^+  \hat{\xi}(\lambda^*)) < \infty,
$$
where $\hat{\xi}(\lambda^*) := \int_0^\infty e^{-\lambda^* t}\xi(dt)$. Hence, by Proposition 1.1 of \cite{nerman1981convergence}, $P(W_\infty>0)=1$. Recalling the moment generating function $\phi$ of $W_{\infty}$, we obtain $\lim_{t\to\infty} \phi(t)=0$. In particular, there exists $t_0'>0$ such that for all $t>t_0'$, $\phi(t)< \frac{1}{2}$. Let $\delta \in (0,1)$ be arbitrarily fixed and define $N_\delta :=\sum_{i=1}^\infty 1_{\{ A_i>\delta\}}$, where we recall $A_i=e^{-\lambda^*\sigma_i}$. Note that $P(N_\delta <\infty)=1$, since $\lim_{i \rightarrow \infty} \sigma_i = \infty$ almost surely (which follows from the fact that, by the at-most-linear growth of $f$ required by Assumption (A1), $\sum_{i=1}^{\infty}\frac{1}{f(i)} = \infty$).

By the functional equation \eqref{funceq} and the fact that $\phi(A_i t)\leq 1$ for all $t$, we see that when $t>t_0'/\delta$,
$$
\phi(t) \leq E\left[ \phi(A_1t) 2^{-N_\delta + 1}\right]=p_{\delta}E\phi(\tilde{A}_1 t)
$$
where $p_{\delta}=E[2^{-N_\delta + 1}]$ and $\tilde{A}_1$ is a positive random variable whose distribution is determined by
$$E g(\tilde{A}_1)=\frac{1}{p_{\delta}} E\left[g(A_1)2^{-N_\delta + 1}\right]$$
for any bounded and measurable function $g$. 
Note that for $E A_1^{-a} = E e^{\lambda^* a \sigma_1} < \infty$ for any $a \in [0, f(1)/ \lambda^*)$. Choose and fix any such $a$. We can write
\begin{equation*}
p_{\delta}E\tilde{A}_1^{-a}=E [A_1^{-a}2^{-N_\delta + 1}].
\end{equation*}
Moreover, almost surely, $N_{\delta} \rightarrow \infty$ as $\delta \rightarrow 0$.
Hence, by the dominated convergence theorem,
\begin{align*}
\limsup_{\delta \rightarrow 0} E [A_1^{-a} 2^{-N_\delta + 1}] = 0.
\end{align*}
Hence, choosing $\delta \in(0,1)$ small enough, we have $p_{\delta}E\tilde{A}_1^{-a}<1$. Applying Lemma 3.2, we conclude that for any $a \in [0, f(1)/ \lambda^*)$, there exists $C_a'>0$ such that
$\phi(t) \le C_a' t^{-a}$ for any $t >0$.
It follows that for any $x>0$,
\begin{align*}
P(W_\infty \leq x)&\leq \inf_{\theta > 0} e^{\theta x}\phi(\theta)\leq C_a'\inf_{\theta > 0} \exp\left(\theta x- a \log \theta\right).
\end{align*}
Optimizing the above bound in $\theta$ leads to the bound claimed in the lemma.
\end{proof}

Recall $\{T_n\}_{n \ge 1}$ from \eqref{Tdef}. The following lemma gives tail estimates for the distribution of $T_n$.
\begin{lemma}\label{Tbounds}
Let $\{\gamma_n\}_{n\geq 1}$ be a non-decreasing sequence such that $\lim_{n\to\infty}\gamma_n=\infty$. 

\mn
(i) Suppose in addition $\limsup_{n\to \infty}\frac{\gamma_n}{\log n}=0$. For any $a \in  [0, f(1)/ \lambda^*)$, there exists $\tilde C_a>0$ such that for $n\geq 3$,
\beq\label{Tupper}
P(T_n\geq \frac{1}{\lambda^*}\log n+\gamma_n)\leq C  e^{- a \lambda^* \gamma_n}.
\eeq

\mn
(ii) For any $k\in\mathbb{N}$, there exists some $C_k>0$ so that for $n\geq1$,
\beq\label{Tlower}
P(T_n<\frac{1}{\lambda^*}\log n-\gamma_n)\leq C_ke^{-k\lambda^*\gamma_n}.
\eeq
\end{lemma}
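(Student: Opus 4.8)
The idea is to read both tails off the size of the branching process via $\{T_n \le t\} = \{|\widetilde{BP}_f(t)| \ge n+1\}$ (valid for $t>0$, since $|\widetilde{BP}_f(\cdot)|$ is nondecreasing and jumps by one at each birth), and then to use the Malthusian growth of Theorem~\ref{brlim}. Write $s_n := \tfrac{1}{\lambda^*}\log n$, so that $n e^{-\lambda^* s_n}=1$. Since $|\widetilde{BP}_f(t)| = |BP^{(0)}_f(t)| + |BP^{(1)}_f(t)|$ is a sum of two i.i.d.\ copies of $|BP_f(t)|$, it is enough to argue with one copy.

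For part (i), since $|\widetilde{BP}_f(t)| \ge |BP^{(1)}_f(t)| \overset{d}{=}|BP_f(t)|$, it suffices to bound $P\big(|BP_f(s_n+\gamma_n)| \le n\big)$. I would first run $BP_f$ for time $\gamma_n$: if $u$ is any of the $N:=|BP_f(\gamma_n)|$ individuals then alive, having produced $A_u$ children, its first child born after $\gamma_n$ starts a fresh independent copy of $BP_f$ after a delay $\delta_u$ that is $\mathrm{Exp}(f(A_u+1))$, hence stochastically at most $\mathrm{Exp}(f_*)$. These fresh subtrees are disjoint, so $\{|BP_f(s_n+\gamma_n)| \le n\}$ forces each of them to have at most $n$ vertices at time $s_n+\gamma_n$, i.e.\ forces a fresh $BP_f$ run for time $s_n-\delta_u$ to have size $\le n$. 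Conditioning on $\mathcal{F}_{\gamma_n}$ and on the $\delta_u$'s, and using $P(|BP_f(s_n-\delta)| \le n) = P\big(e^{-\lambda^*(s_n-\delta)}|BP_f(s_n-\delta)| \le e^{\lambda^*\delta}\big) \to P(W_\infty \le e^{\lambda^*\delta})$ (Theorem~\ref{brlim}), dominated convergence together with the full support of $W_\infty$ and $f(A_u+1)\ge f_*>0$ produces a constant $q=q(f)\in(0,1)$ with $P\big(|BP_f(s_n+\gamma_n)| \le n\big) \le E\big[q^{\,|BP_f(\gamma_n)|}\big]$ for all large $n$. It then remains to show
\[
E\big[q^{|BP_f(t)|}\big] \;\le\; C_a\, e^{-a\lambda^* t}, \qquad a \in [0,f(1)/\lambda^*),
\]
for which I would use the stopping-line decomposition $W_\infty = \sum_{w\in\mathcal{S}_t} e^{-\lambda^*\beta_w} W_\infty^{(w)}$, where $\mathcal{S}_t$ is the set of children born after $t$ of individuals alive at $t$, $\beta_w>t$ is the birth time of $w$, and the $W_\infty^{(w)}$ are i.i.d.\ copies of $W_\infty$ independent of $\mathcal{F}_t$. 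Taking conditional expectations and grouping by parent, $E[e^{\lambda^* t}W_\infty \mid \mathcal{F}_t] = \big(\sum_{u \text{ alive at } t}\hat{\mu}_{A_u}(\lambda^*)\big)\,EW_\infty$, where $\hat{\mu}_A$ is the Laplace transform of the mean measure of $\xi_A$. Assumption (A3) gives $f(j)\le(\lambda^*-\varepsilon_0)j$ for all large $j$, whence $\hat{\mu}_A(\lambda^*)\le C(A+1)$ uniformly in $A$; combined with $\sum_u A_u = |BP_f(t)|-1$ this yields $E[W_\infty\mid\mathcal{F}_t] \le C'e^{-\lambda^* t}|BP_f(t)|\,EW_\infty$, i.e.\ $|BP_f(t)| \ge c\,e^{\lambda^* t}E[W_\infty\mid\mathcal{F}_t]$ for some $c>0$. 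Since $x\mapsto q^x$ is decreasing and convex, conditional Jensen gives
\[
E\big[q^{|BP_f(t)|}\big] \;\le\; E\big[q^{\,c\,e^{\lambda^* t}E[W_\infty\mid\mathcal{F}_t]}\big] \;\le\; E\big[q^{\,c\,e^{\lambda^* t}W_\infty}\big] \;=\; \phi\big(c\,|\log q|\,e^{\lambda^* t}\big),
\]
with $\phi(\theta)=Ee^{-\theta W_\infty}$; the proof of Lemma~\ref{lefttail} already gives $\phi(\theta)=O(\theta^{-a})$ for $a\in[0,f(1)/\lambda^*)$, so taking $t=s_n+\gamma_n$ finishes part (i) (here the hypothesis $\gamma_n/\log n\to0$ absorbs lower-order error terms, e.g.\ from individuals whose next offspring is delayed by a time of order $s_n$).

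For part (ii), if $\tfrac{1}{\lambda^*}\log n - \gamma_n \le 0$ the event is empty; otherwise set $r_n := \tfrac{1}{\lambda^*}\log n - \gamma_n$, so $P(T_n<r_n)=P(|\widetilde{BP}_f(r_n)| > n)$, and Markov's inequality for the $k$-th moment, with $n e^{-\lambda^* r_n}=e^{\lambda^*\gamma_n}$, gives $P(T_n<r_n) \le e^{-k\lambda^*\gamma_n}\sup_{s\ge0}E[(e^{-\lambda^* s}|\widetilde{BP}_f(s)|)^k]$. Hence it suffices to prove $M_k := \sup_{s\ge0}E[(e^{-\lambda^* s}|BP_f(s)|)^k]<\infty$ for every $k$, after which $C_k=2^k M_k$ works. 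Writing $m_k(s):=E[|BP_f(s)|^k]$ and decomposing $|BP_f(s)| = 1+\sum_{l:\sigma_l\le s}|BP_f^{(l)}(s-\sigma_l)|$ over the root's offspring, a multinomial expansion and the conditional independence of the subtrees give a renewal inequality $m_k \le m_k\star\mu + g_k$, where $\mu=E[\xi(\cdot)]$ and $g_k$ is a finite sum of products $\prod_i m_{j_i}$ with $\sum_i j_i=k$ and all $j_i<k$, weighted by mixed factorial moments of $\xi$ that reduce to values of $\hat{\mu}$ at positive integer multiples of $\lambda^*$; since $\underline{\lambda}\le\lambda^*$, all such values (and hence $E[\hat{\xi}(\lambda^*)^k]$) are finite, so by induction on $k$ one gets $g_k(s)\le C_k e^{k\lambda^* s}$. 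The base case $m_1$ is the renewal function of $\mu$, which is $O(e^{\lambda^* s})$ by the key renewal theorem ($\mu$ being nonlattice and $-\hat{\mu}'(\lambda^*)<\infty$, as $\hat{\mu}$ is analytic at $\lambda^*>\underline{\lambda}$); for $k\ge2$, solving against the renewal measure $U$ of $\mu$ and using $\hat{U}(k\lambda^*)=(1-\hat{\mu}(k\lambda^*))^{-1}<\infty$ (because $\hat{\mu}(k\lambda^*)<\hat{\mu}(\lambda^*)=1$) gives $m_k(s)\le C'_k e^{k\lambda^* s}$. This proves $M_k<\infty$ and completes part (ii).

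The main obstacle is the uniform-in-$t$ estimate $E[q^{|BP_f(t)|}]=O(e^{-a\lambda^* t})$ of part (i): the a.s.\ convergence $e^{-\lambda^* t}|BP_f(t)|\to W_\infty$ is not by itself quantitative enough, and the crux is the stopping-line identity for $W_\infty$ (together with the uniform bound $\hat{\mu}_A(\lambda^*)=O(A)$, which is exactly where Assumption (A3) is needed), which reduces matters to the behaviour of $\phi$ near infinity, i.e.\ to the estimate already established in the proof of Lemma~\ref{lefttail}. By comparison, part (ii) is essentially routine once one knows that $\hat{\mu}$ is finite --- hence, for $k\ge2$, strictly below $1$ --- at every integer multiple of the Malthusian rate, so that all moments of $e^{-\lambda^* s}|BP_f(s)|$ are bounded uniformly in $s$ and Markov's inequality does the rest.
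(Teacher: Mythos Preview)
Your argument is essentially correct, but it takes a genuinely different (and substantially longer) route than the paper.

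For part (i) the paper does not run the branching process in two stages. It simply writes
\[
P(T_n\ge \sigma'_n)\le 2P\big(e^{-\lambda^*\sigma'_n}|BP_f(\sigma'_n)|\le x_n\big),\qquad x_n\asymp e^{-\lambda^*\gamma_n},
\]
and then splits this probability using an \emph{external} quantitative rate of convergence to $W_\infty$ (Theorem~3.7 of \cite{jordan}: $P(\sup_{s\ge t}|e^{-\lambda^* s}|BP_f(s)|-W_\infty|>e^{-\delta t})\le C_1e^{-C_2 t}$), together with the left-tail bound $P(W_\infty\le y)\le C_a y^a$ of Lemma~\ref{lefttail}. The hypothesis $\gamma_n=o(\log n)$ is used to check $e^{-\delta\sigma'_n}\ll x_n$. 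Your approach avoids citing any quantitative convergence theorem: you manufacture independence by restarting from the first post-$\gamma_n$ child of each individual, reduce to $E[q^{|BP_f(\gamma_n)|}]$, and then control this via the stopping-line identity $E[W_\infty\mid\mathcal F_t]\le Ce^{-\lambda^* t}|BP_f(t)|$ (this is where you really need (A3), through $\hat\mu_A(\lambda^*)=O(A)$), Jensen, and the Laplace-transform estimate $\phi(\theta)=O(\theta^{-a})$ from the proof of Lemma~\ref{lefttail}. This is self-contained but much heavier. Two minor slips: at the end you should take $t=\gamma_n$, not $t=s_n+\gamma_n$; and your bound does not actually seem to require $\gamma_n=o(\log n)$, so your remark about it ``absorbing error terms'' is cosmetic.

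For part (ii) both proofs use Markov's inequality at level $k$. The paper bounds by $E\big[(\sup_t e^{-\lambda^* t}|BP_f(t)|)^k\big]$ and invokes Theorem~4 of \cite{Mori19}, verifying its hypotheses via the exponential moment of $\hat\xi(\lambda^*)$ (Proposition~5.7 of \cite{jordan}) and Yule-process domination. You instead prove the weaker $\sup_t E[(e^{-\lambda^* t}|BP_f(t)|)^k]<\infty$ directly by a renewal inequality and induction on $k$, using $\hat\mu(k\lambda^*)<1$ for $k\ge 2$. This is a standard and valid alternative; your claim that the cross-term weights ``reduce to values of $\hat\mu$ at integer multiples of $\lambda^*$'' is slightly imprecise (they are iterated products thereof, via the independent-increments structure of the $\sigma_l$), but the finiteness you need does follow.
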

\begin{proof}
We begin with the proof of \eqref{Tupper}. To simplify notation let $\sigma'_n=\frac{1}{\lambda^*}\log n+\gamma_n$. First note that
\begin{align*}
P(T_{n}\geq \sigma'_n)&=P(\widetilde{BP}_f(\sigma'_n)\leq n+1)=P(BP^{(0)}_f(\sigma'_n)+ BP^{(1)}_f(\sigma'_n)\leq n+1)\\
&\leq 2P(BP_f(\sigma'_n)\leq (n+1)/2)=2P\left(e^{-\lambda^* \sigma'_n}BP_f(\sigma'_n)\leq e^{-\lambda^* \sigma'_n}(n+1)/2\right),
\end{align*}
where the second line follows from the fact that $BP^{(0)}_f(\cdot)$ and $BP^{(1)}_f(\cdot)$ are i.i.d. copies of $BP_f(\cdot)$. Let $x_n= e^{-\lambda^* \sigma'_n}(n+1)/2$. According to Theorem 3.7 in \cite{jordan} there exist finite positive constants $C_1,C_2,\delta$ (depending on $f$) such that for any $t\geq 0$,
\beq\label{3.6err1}
P\left(\sup_{s\in [t,\infty)} |e^{-\lambda^* s}BP_f(s)-W_\infty|>e^{-\delta t}\right) \leq C_1e^{-C_2t}.
\eeq
Hence,
\begin{align*}
P\left( e^{-\lambda^* \sigma'_n}BP_f(\sigma'_n)\leq x_n\right)\leq P( |e^{-\lambda^* \sigma'_n}BP_f(\sigma'_n)-W_\infty|> e^{-\delta \sigma'_n})+P(W_\infty \leq x_n+e^{-\delta \sigma'_n}).
\end{align*}
By Lemma \ref{lefttail} for any $a \in  [0, f(1)/ \lambda^*)$, there exists $C_a>0$ such that for $n \ge 3$,
\beq\label{3.6err2}
P(W_\infty \leq x_n+e^{-\delta \sigma'_n})\leq  C_a(x_n +e^{-\delta \sigma'_n})^a.
\eeq
When $n$ is sufficiently large, $e^{-\delta \sigma'_n} \ll x_n$ (this follows from the assumption that $\gamma_n = o(\log n)$). Hence, collecting \eqref{3.6err1} and \eqref{3.6err2}, there exists $\tilde C_a>0$ so that for $n\geq 3$,
$$
P(T_n\geq \frac{1}{\lambda^*}\log n+\gamma_n)\leq  \tilde C_a  e^{-a \lambda^* \gamma_n}.
$$
It remains to prove \eqref{Tlower}. Let $\sigma''_n=\frac{1}{\lambda^*}\log n-\gamma_n$. Note that
\begin{align*}
P(T_n<\sigma''_n)&= P(BP^{(0)}_f(\sigma''_n)+ BP^{(1)}_f(\sigma''_n)\geq n+1)\leq 2P(BP_f(\sigma''_n)\geq n/2)\\
&=2P(e^{-\lambda^* \sigma''_n}BP_f(\sigma''_n)\geq e^{-\lambda^* \sigma''_n} \cdot n/2)\\
&=2P(e^{-\lambda^* \sigma''_n}BP_f(\sigma''_n)\geq e^{\lambda^*\gamma_n}/2)\\
&\leq 2^{k+1}e^{-k\lambda^* \gamma_n} E\left[ \left(\sup_{t\geq 0} e^{-\lambda^* t}|BP_f(t)|\right)^k\right]
\end{align*}
for any positive integer $k$.
Recall that $\{\sigma_i: i\geq1\}$ denote the arrival times of the point process $\xi$ defined in \eqref{pprocess}. By Theorem 4 in \cite{Mori19}, if for any positive integer $k$, $E\left(\sum_{i=1}^\infty e^{-\lambda^*\sigma_i}\right)^k<\infty$ and $E(|BP_f(t)|^k)<\infty$ for every $t\geq 0$, then 
$$
C_k :=  2^{k+1}E\left[ \left(\sup_{t\geq 0} e^{-\lambda^* t}|BP_f(t)|\right)^k\right]<\infty.
$$
To complete the proof it suffices to check the two conditions hold. By Proposition 5.7 in \cite{jordan}, the random variable $Y := \sum_{i=1}^\infty e^{-\lambda^*\sigma_i}$ satisfies $E(e^{\delta Y})< \infty$ for some $\delta>0$. In particular, $E\left(\sum_{i=1}^\infty e^{-\lambda^*\sigma_i}\right)^k<\infty$ for any $k\in \mathbb{N}$. Moreover, by Assumption (A1), $BP_f(t)$ is dominated by a Yule process with a linear attachment function. Hence, by \eqref{yulemgf}, it is clear that $E(|BP_f(t)|^k)<\infty$ for every $t\geq 0$. 
\end{proof}


\subsection{Asymptotics for age of maximal degree vertex in CTBP}\label{agesec}

The proof of Theorem \ref{nonpersistent} relies on a precise quantification of the tail probabilities of the age of the (oldest) vertex with maximal degree in a continuous time branching process. This is achieved in this subsection.

For $a\geq 0,i\in\mathbb{N}$, let $B^{(i)}_a$ be the birth time of the $i$-th individual born at or after time $a$ and let $\xi^{(i)}$ denote the point process of times when this individual reproduces, measured relative to its own birth time (i.e. $\xi^{(i)}(s')=k$ if individual $i$ has $k$ children at time $s' + B^{(i)}_a$). For $s\geq 0$, let $D^{(i)}_a(s) := \Phi_1(\xi^{(i)}(s))$. Again, note that for any $a\geq 0$, $\{D^{(i)}_a(\cdot): i\in \mathbb{N}\}$ are i.i.d. processes, each having the same distribution as $D_0^{(1)}(\cdot)$. For any $0\leq a<b$, let $n[a,b]$ denote the number of individuals born to the branching process in the time interval $[a,b]$.

For any $0\leq a<b\leq c$ with $n[a,b]>0$, let  
\beq
D^{max}_{a,b}(c):=\sup\{ D^{(i)}_a(c-B^{(i)}_a):i\in \mathbb{N} \text{ such that }B^{(i)}_a\leq b\}
\eeq
denote the maximum of the degrees of vertices born in the time interval $[a,b]$ at time $c$. We define $D^{max}_{a,b}(c) =0$ if $c < b$ or $n[a,b] = 0$.
Define
\beq\label{maxbirth}
\II^*_c(t):=\sup \{B_0^{(i)}: D^{(i)}_0(t-B_0^{(i)})=D^{max}_{0,t} (t)\}.
\eeq
Notice that this definition is different from that of $\II^*_c(t)$ in \cite{BBpersistence} (there is an $\inf$ in place of $\sup$ in \cite{BBpersistence}). However, technically, they can be analyzed using similar tools. The following is the main result of this subsection.

\begin{lemma}\label{maxposition}
Suppose $\KK(\cdot) := \Phi_2 \circ \Phi_1^{-1}(\cdot)$ satisfies the continuity assumptions in Theorem \ref{nonpersistent}. For any $\ep>0$,
$$ P\left( \II^*_c(T_n) > \left(\frac{\lambda^*}{2}+\ep\right)\KK\left(\frac{1}{\lambda^*}\log n\right)\right) \leq C\exp\left(-C'\KK\left(\frac{1}{\lambda^*}\log n\right)\right), \ n \ge 3,$$
for some positive constants $C,C'$.
\end{lemma}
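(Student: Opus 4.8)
The plan is to reduce the statement to two sharp large–deviation estimates for the degree functional $D^{(i)}=\Phi_1\circ\xi^{(i)}$ of individual vertices of $BP_f$ — one controlling the maximum over vertices born \emph{after} a level $a_n$, the other guaranteeing that some vertex born \emph{before} $a_n$ already has a large degree — and to balance them across a carefully placed threshold. Set $\mathcal K_n:=\KK(\tfrac1{\lambda^*}\log n)$; since $f(i)\to\infty$ we have $\Phi_2(l)=o(\Phi_1(l))$, hence $\mathcal K_n=o(\log n)$, so with $\gamma_n:=c\mathcal K_n$ ($c>0$ small) Lemma~\ref{Tbounds} gives $P(T_n\notin[t_n^-,t_n^+])\le C\exp(-C'\mathcal K_n)$ for $t_n^\pm:=\tfrac1{\lambda^*}\log n\pm\gamma_n$. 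Write $a_n:=(\tfrac{\lambda^*}2+\ep)\mathcal K_n$. On $\{\II^*_c(T_n)>a_n\}$ the vertex realizing $D^{max}_{0,T_n}(T_n)$ is born in $(a_n,T_n]$, so using $t_n^-\le T_n\le t_n^+$ and the monotonicity in time of each vertex's $D$–value,
\[
D^{max}_{a_n,t_n^+}(t_n^+)\ \ge\ D^{max}_{a_n,T_n}(T_n)\ =\ D^{max}_{0,T_n}(T_n)\ \ge\ D^{max}_{0,a_n}(t_n^-).
\]
Fixing $d_n^*:=\tfrac1{\lambda^*}\log n+(\tfrac{\lambda^*}2-\delta)\mathcal K_n$ with $0<\delta<\ep$ small (and $c<\delta$, $c+\delta<\ep$), it then suffices to bound $P\big(D^{max}_{a_n,t_n^+}(t_n^+)\ge d_n^*\big)$ and $P\big(D^{max}_{0,a_n}(t_n^-)<d_n^*\big)$. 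Here $d_n^*$ is chosen just below the typical value of $D^{max}_{0,T_n}(T_n)$, while (consistently with \eqref{indas}) the birth time $\approx\tfrac{\lambda^*}2\mathcal K_n$ of the maximal–degree vertex lies strictly inside $[0,a_n]$.

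For the first probability I would use a first–moment bound. Since $\{D_0^{(i)}(u)\ge d\}=\{\xi^{(i)}(u)\ge N_n^*\}=\{S^{(i)}_1(N_n^*)\le u\}$ with $N_n^*:=\lceil\Phi_1^{-1}(d_n^*)\rceil$ (recall $\sigma_l=S_1(l)$ and $S^{(i)}_1$ an independent copy), and since each vertex's offspring process is independent of its birth time while the expected number of individuals born by time $t$ in $BP_f$ is $\le Ce^{\lambda^* t}$,
\[
P\big(D^{max}_{a_n,t_n^+}(t_n^+)\ge d_n^*\big)\ \le\ C\int_{a_n}^{t_n^+}e^{\lambda^* s}\,g_n(t_n^+-s)\,ds,\qquad g_n(u):=P\big(S_1(N_n^*)\le u\big).
\]
The crux is the Chernoff bound $g_n(u)\le e^{\theta u-\psi_n(\theta)}$ with $\psi_n(\theta):=\sum_{i=1}^{N_n^*}\log(1+\theta/f(i))=-\log E[e^{-\theta S_1(N_n^*)}]$: the integrand is log–concave in $s$ with unconstrained maximizer $s_{crit}=t_n^+-\psi_n'(\lambda^*)$, and using $\psi_n(\lambda^*)=\lambda^* d_n^*-\tfrac{(\lambda^*)^2}2\KK(d_n^*)(1+o(1))$, $\psi_n'(\lambda^*)=d_n^*-\lambda^*\KK(d_n^*)(1+o(1))$ (with the Taylor remainder controlled by $\Phi_3(N_n^*)=o(\Phi_2(N_n^*))$) and $\KK(d_n^*)=\mathcal K_n(1+o(1))$ (continuity assumption~1, as $d_n^*/(\tfrac1{\lambda^*}\log n)\to1$), one checks $s_{crit}=(\tfrac{\lambda^*}2+\delta+c)\mathcal K_n(1+o(1))<a_n$. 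Hence the integral concentrates near the \emph{endpoint} $s=a_n$: it is at most $\tfrac{C}{\theta_{t_n^+-a_n}-\lambda^*}e^{\lambda^* a_n}g_n(t_n^+-a_n)$ with tilting parameter $\theta_{t_n^+-a_n}-\lambda^*\to\ep-\delta-c>0$, and expanding the rate function $J_n(u):=\sup_{\theta\ge0}(\psi_n(\theta)-\theta u)$ to second order about $u_{crit}=\psi_n'(\lambda^*)$ yields $\lambda^* a_n-J_n(t_n^+-a_n)=\big(\lambda^*(c+\delta)-\tfrac{(\ep-\delta-c)^2}2+o(1)\big)\mathcal K_n$; taking $c,\delta$ small compared with $\ep$ this is $\le -C'\mathcal K_n$. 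The whole gain over the (divergent) total expectation $\int_0^{t_n^+}e^{\lambda^* s}g_n(t_n^+-s)\,ds\asymp n\,e^{-\psi_n(\lambda^*)}$ comes precisely from $s_{crit}<a_n$.

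For $P\big(D^{max}_{0,a_n}(t_n^-)<d_n^*\big)$ — that \emph{no} vertex born in $[0,a_n]$ has $D$–value $\ge d_n^*$ at time $t_n^-$ — I would condition on $\mathcal F_{a_n}:=\sigma(BP_f(s):s\le a_n)$. Given $\mathcal F_{a_n}$ the future offspring of the finitely many individuals already alive are independent (a vertex with $k$ offspring at $a_n$ reproduces further as an independent $\xi_k$, by memorylessness of the interarrival exponentials of $\xi$), so the events $\{D^{(i)}(t_n^--B^{(i)})\ge d_n^*\}$ are conditionally independent and $P(D^{max}_{0,a_n}(t_n^-)<d_n^*\mid\mathcal F_{a_n})\le\exp(-\sum_{i:\,B^{(i)}\le a_n}q_i)$ with $q_i\gtrsim g_n(t_n^--B^{(i)})$. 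Restricting the sum to the unit window about $s^*:=t_n^--u_{crit}=(\tfrac{\lambda^*}2+\delta-c)\mathcal K_n(1+o(1))\in(0,a_n)$ and using, on the large–probability event where the convergence in Lemma~\ref{brlim} has set in, $\mathcal N([s^*-1,s^*])\gtrsim W_\infty e^{\lambda^* s^*}$ together with the matching large–deviation \emph{lower} bound $g_n(t_n^--s^*)\gtrsim\mathcal K_n^{-1/2}e^{-J_n(u_{crit})}$ (exponential tilting plus a local CLT for the hypoexponential $S_1(N_n^*)$), one gets $\sum_i q_i\gtrsim W_\infty\,\mathcal K_n^{-1/2}\,e^{(\lambda^*(\delta-c)+o(1))\mathcal K_n}$; on $\{W_\infty\ge e^{-\eta\mathcal K_n}\}$, whose complement has probability $\le Ce^{-C'\mathcal K_n}$ by Lemma~\ref{lefttail}, and with $\eta<\lambda^*(\delta-c)$, this conditional mean diverges super–polynomially. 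Hence $P(D^{max}_{0,a_n}(t_n^-)<d_n^*)\le Ce^{-C'\mathcal K_n}$ plus a doubly–exponentially small term, and collecting the four contributions proves the lemma.

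The main obstacle is obtaining the pair of large–deviation estimates for $g_n$ \emph{sharp enough to track the constant $(\lambda^*)^2/2$} — which is what pins down the critical birth time $\tfrac{\lambda^*}2\mathcal K_n$ — uniformly over the relevant range of ages, together with the Laplace–type evaluation of the first–moment integral that exploits the unconstrained maximizer sitting just outside $[a_n,t_n^+]$: this is delicate because $t_n^+-a_n$ and $u_{crit}$ differ by only an $O(1)$ multiple of $\mathcal K_n$, so the tilting parameter stays at distance $\ep-\delta-c=O(1)$ from $\lambda^*$ and the full non–Gaussian rate function must be carried, its remainder being controlled by $\Phi_3(N_n^*)=o(\Phi_2(N_n^*))$. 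A secondary difficulty is the conditional–independence decomposition of $BP_f$ at time $a_n$ and the import of the large–deviation lower bound used in the second part, and (a purely bookkeeping point) reconciling the random time $T_n$ with the fixed times $t_n^\pm$ via Lemma~\ref{Tbounds} and the monotonicity of $D$.
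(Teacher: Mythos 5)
Your proposal is correct in its essentials but follows a genuinely different route from the paper. The paper reduces the lemma to quantified versions of Lemmas 7.12--7.14 of \cite{BBpersistence} (its Lemmas \ref{lem7.12}--\ref{lem7.14}): it partitions the birth-time axis into finitely many windows of width $O(\KK(t))$, uses the function $H(u)=-u+\sqrt{2\lambda^* u}$ to assign a degree threshold to each window, shows via the imported Gaussian-type estimate (7.52) of \cite{BBpersistence} that vertices born near $u^*\KK(t)$ beat every later window's threshold, and disposes of very late birth times with Lemma \ref{lem7.14}, which is where the doubling hypothesis $\KK(3t)\le D\KK(t)$ enters. You instead work with a single threshold $d_n^*$ and two one-shot estimates: a Markov/Chernoff first-moment integral $\int_{a_n}^{t_n^+}e^{\lambda^* s}g_n(t_n^+-s)\,ds$ over \emph{all} vertices born after $a_n$ (valid because each vertex's reproduction process is independent of its birth time and $\sup_s E(e^{-\lambda^* s}n[0,s])<\infty$), and a ``some early vertex succeeds'' argument with a tilted-CLT lower bound on $g_n(u_{crit})$ in place of (7.52). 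Your exponent bookkeeping ($\psi_n(\theta)\ge\theta d_n^*-\tfrac{\theta^2}{2}\KK(d_n^*)$, $\KK(d_n^*)=(1+o(1))\mathcal K_n$ by continuity assumption 1, endpoint exponent $\lambda^*(c+\delta)-\tfrac{(\ep-\delta-c)^2}{2}<0$ for $c,\delta$ small, and margin $\lambda^*(\delta-c)\mathcal K_n$ in the second part) checks out, and the random time $T_n$ is handled exactly as in the paper via Lemma \ref{Tbounds} with $\gamma_n=\mathcal K_n$. A pleasant byproduct of your first-moment treatment of late births is that it appears not to need the doubling assumption 2 at all, only continuity assumption 1.

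One step needs repair, though it is fixable by a standard device. In the second part you condition on $\mathcal F_{a_n}$ and assert $q_i\gtrsim g_n(t_n^--B^{(i)})$ on the grounds that a vertex which already has $k$ offspring ``can only do better''; since $f$ is not assumed monotone in Theorem \ref{nonpersistent}, this comparison between $\xi_k$ started at $a_n$ and a fresh $\xi$ started at $B^{(i)}$ is not justified, and conditioning on all birth times also partially constrains each individual's own reproduction process (its children's birth times are its points), so exact conditional independence with success probability $g_n$ does not hold as stated. The clean fix is the one used implicitly in the paper's proof of Lemma \ref{lem7.13}: on the complement of $\{n[s^*-1,s^*]<M\}$, the first $M$ individuals born at or after $s^*-1$ have age at least $t_n^--s^*$ at time $t_n^-$, so $\{D^{max}_{0,a_n}(t_n^-)<d_n^*\}\subseteq\{n[s^*-1,s^*]<M\}\cup\{D^{(i)}_{s^*-1}(t_n^--s^*)<d_n^* \ \forall i\le M\}$, and the second event involves only the i.i.d.\ processes $D^{(i)}_{s^*-1}$ evaluated at a deterministic time, giving the factor $(1-g_n(t_n^--s^*))^M$ by plain independence; the quantitative control of $n[s^*-1,s^*]$ then comes from the same ingredients you cite (Theorem \ref{brlim} with the rate from \cite{jordan} and Lemma \ref{lefttail}), as in Lemma \ref{lem7.12}. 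With that substitution your argument goes through.
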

The proof of Lemma \ref{maxposition} relies on quantitative versions of Lemmas 7.12-7.14 in \cite{BBpersistence} described in the following three lemmas.

\begin{lemma}\label{lem7.12}
Assume that the attachment function $f$ satisfies Assumptions (A1)-(A3). For any $0\leq a<b$, there exists $\delta_1>0$ such that for any $\delta\in (0, \delta_1)$, there exist $C,C'>0$ that depend on $f, \delta,a,b$ such that 
$$
P(n[at,bt]\leq e^{(\lambda^*-\delta)bt})\leq Ce^{-C't}, \ \ P(n[0,bt]>e^{(\lambda^* +\delta)bt}) \le Ce^{-\delta b t}.
$$
\end{lemma}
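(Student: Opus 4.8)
The plan is to establish the two-sided control on $n[\cdot,\cdot]$, the number of vertices born in a given time window, by comparing with the total population size $|BP_f(\cdot)|$ of the continuous-time branching process and then invoking the sharp almost-sure and $\mathbb{L}^2$ convergence $e^{-\lambda^* t}|BP_f(t)| \to W_\infty$ from Theorem \ref{brlim}, together with the uniform fluctuation bound \eqref{3.6err1} (Theorem 3.7 in \cite{jordan}) and the left-tail estimate for $W_\infty$ from Lemma \ref{lefttail}. For the lower bound on $n[at,bt]$, I would first observe that the number of individuals born in $[at,bt]$ is exactly $|BP_f(bt)| - |BP_f(at)|$ (every individual after the initial one is born at a unique time). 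Thus $n[at,bt] \le e^{(\lambda^*-\delta)bt}$ forces either $|BP_f(bt)|$ to be smaller than $e^{(\lambda^*-\delta/2)bt}$, say, or $|BP_f(at)|$ to be at least $e^{(\lambda^*-\delta)bt} - e^{(\lambda^*-\delta/2)bt}$, which is $\gg e^{(\lambda^*-\delta/2)bt}$ for large $t$ since $b>a$; both events have exponentially small probability — the first because $e^{-\lambda^* bt}|BP_f(bt)|$ would have to be below $e^{-(\delta/2) bt}$, which by \eqref{3.6err1} and Lemma \ref{lefttail} (choosing any admissible exponent $\tilde a>0$) costs $e^{-c t}$; the second because $e^{-\lambda^* at}|BP_f(at)|$ would have to exceed $e^{(\lambda^* b - \lambda^* a - \delta/2)t - \lambda^* a t}\cdot e^{\lambda^* a t}$... more cleanly, $e^{-\lambda^*at}|BP_f(at)| \ge e^{(\lambda^*(b-a) - \delta/2)t}$, which by the moment bound on $\sup_t e^{-\lambda^* t}|BP_f(t)|$ (used in the proof of Lemma \ref{Tbounds}, via Theorem 4 in \cite{Mori19}) is $e^{-k\lambda^*(b-a-\delta/(2\lambda^*))t}$-small for every $k$. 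This fixes $\delta_1$ as any value less than $\lambda^*(b-a)$, and gives the first bound.

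For the upper bound $P(n[0,bt] > e^{(\lambda^*+\delta)bt}) \le Ce^{-\delta bt}$, I would use $n[0,bt] \le |BP_f(bt)|$ and then bound $P(|BP_f(bt)| > e^{(\lambda^*+\delta)bt}) = P(e^{-\lambda^* bt}|BP_f(bt)| > e^{\delta bt})$. Here the cleanest route is again the moment bound: by the argument in the proof of Lemma \ref{Tbounds}, $E[(\sup_{t\ge 0}e^{-\lambda^* t}|BP_f(t)|)^k] < \infty$ for every $k \in \mathbb{N}$, so by Markov's inequality this probability is at most $C_k e^{-k\delta bt}$; taking $k=1$ already yields the stated $Ce^{-\delta bt}$, and larger $k$ would give more. (Alternatively one can dominate $BP_f$ by a Yule process and use the explicit moment generating function \eqref{yulemgf}, but the moment bound is shorter and already available.)

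The one genuinely delicate point — and the step I expect to be the main obstacle — is making the lower-bound argument fully rigorous in the regime $a = 0$, where $|BP_f(at)| = |BP_f(0)| = 1$ and the subtraction argument degenerates, and more generally in ensuring that the constant $\delta_1$ can be taken uniform over the relevant range and that the $O(e^{-C't})$ error in \eqref{3.6err1} genuinely dominates the polynomially-small contribution $C_{\tilde a}(e^{-\delta bt/2})^{\tilde a}$ from Lemma \ref{lefttail} — this needs $\delta>0$ fixed and $t$ large, and one should choose the exponent $\tilde a$ in Lemma \ref{lefttail} to be any fixed positive number (e.g. $\tilde a = 1$), since only exponential-in-$t$ decay is claimed. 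When $a=0$ the lower bound is immediate: $n[0,bt] = |BP_f(bt)| - 1$, so $n[0,bt] \le e^{(\lambda^*-\delta)bt}$ directly forces $e^{-\lambda^* bt}|BP_f(bt)| \le 2e^{-\delta bt}$ for large $t$, handled exactly as above. For $0 < a < b$ the two-term split described in the first paragraph applies verbatim. Assembling these pieces with a union bound and absorbing constants completes the proof.
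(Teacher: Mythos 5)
Your proposal is correct and follows essentially the same route as the paper: split the event $\{n[at,bt]\le e^{(\lambda^*-\delta)bt}\}$ into ``population at time $bt$ is small'' (handled via the uniform fluctuation bound \eqref{3.6err1} together with the left tail of $W_\infty$ from Lemma \ref{lefttail}) and ``population at time $at$ is large'' (handled by Markov's inequality with a moment bound on the normalized population size), with the second displayed bound again following from a one-line Markov estimate. The only differences are cosmetic: the paper uses $\sup_{s\ge 0} E\left(e^{-\lambda^* s}n[0,s]\right)<\infty$ rather than the supremum-moment bound of Mori, and your stated admissible range for $\delta_1$ should be of the form $2\lambda^*(b-a)/b$ rather than $\lambda^*(b-a)$ (the relevant exponent is $\lambda^*(b-a)-\delta b/2$, not $\lambda^*(b-a)-\delta/2$), which is harmless since the lemma only asserts the existence of some $\delta_1>0$.
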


\begin{proof}
Observe that, for any $\delta>0$,
\begin{align}\label{nest}
P(n[at,bt]\leq e^{(\lambda^*-\delta)bt})&\leq P(n[0,at] > e^{(\lambda^*-\delta)bt}) + P(n[0,bt]\leq 2e^{(\lambda^*-\delta)bt}).
\end{align}
To estimate the first term in the bound \eqref{nest}, note that for any $\delta\in (0, \lambda^*(b-a)/(b+a))$,
\begin{align}\label{a1}
P(n[0,at] > e^{(\lambda^*-\delta)bt}) \le P(n[0,at] > e^{(\lambda^* + \delta)at}) \le \sup_{s \ge 0}E\left(e^{-\lambda^*s}n[0,s]\right) e^{-\delta at},
\end{align}
where the supremum above is finite by \cite[Proposition 2.2]{nerman1981convergence} and Assumption (A2).

To estimate the second term in the bound \eqref{nest}, recall that by Theorem \ref{brlim}, $e^{-\lambda^* bt}n[0,bt]$ converges almost surely to a random variable $W_{\infty}$ as $t \rightarrow \infty$. Write
\begin{align}\label{b1}
P(n[0,bt]\leq 2e^{(\lambda^*-\delta)bt}) &= P(e^{-\lambda^* bt}n[0,bt]\leq 2e^{-\delta bt})\nonumber\\
& \le P\left(\left| e^{-\lambda^* bt}n[0,bt] - W_{\infty}\right| > e^{-\delta b t}\right) + P(W_{\infty} \leq 3e^{-\delta bt}).
\end{align}
By Theorem 3.7 in \cite{jordan}, there exist finite positive constants $C_1,C_2,\delta_0$ (depending on $f$) such that for any $t\geq 0$,
$$
P\left(\sup_{s\in [t,\infty)} |e^{-\lambda^* s}n[0,s] -W_\infty|>e^{-\delta_0 t}\right) \leq C_1e^{-C_2t}.
$$
Hence, for any $\delta \in (0, \delta_0/b)$,
$$
P\left(\left| e^{-\lambda^* bt}n[0,bt] - W_{\infty}\right| > e^{-\delta b t}\right) \le P\left(\sup_{s\in [t,\infty)} |e^{-\lambda^* s}n[0,s] -W_\infty|>e^{-\delta_0 t}\right) \leq C_1e^{-C_2t}.
$$
Moreover, using Lemma \ref{lefttail} with $a = f(1)/(2\lambda^*)$, we obtain $C'>0$ such that
$$
P(W_{\infty} \leq 3e^{-\delta bt}) \le C' e^{-f(1)\delta b t/(2\lambda^*)}.
$$
Using the above two bounds in \eqref{b1},
\begin{equation}\label{b2}
P(n[0,bt]\leq 2e^{(\lambda^*-\delta)bt}) \le C_1e^{-C_2t} +  C' e^{-f(1)\delta b t/(2\lambda^*)}.
\end{equation}
The first bound in the lemma, with $\delta_1 := [\lambda^*(b-a)/(b+a)] \wedge [\delta_0/b]$, now follows upon using \eqref{a1} and \eqref{b2} in \eqref{b1}. The second bound follows similarly as \eqref{a1}.
\end{proof}

Recall $\KK(t) :=\Phi_2\circ \Phi_1^{-1}(t), t\geq 0.$
\begin{lemma}\label{lem7.13}
Assume $\Phi_2(\infty)=\infty$ and the attachment function $f$ satisfies Assumption (A1). Also assume that 
\begin{equation}\label{contK}
\lim_{\delta \downarrow 0}\limsup_{t\to\infty} \frac{\KK((1+\delta)t)}{\KK(t)}=1.
\end{equation}
Then for any $0\leq a<b $ and any $\eta\in (0, \sqrt{2\lambda^* b}/2)$, there exists $C,C'>0$ (depending on $a,b,\eta$) such that 
\begin{align}\label{ev}
P\left(D^{max}_{a\KK(t),b\KK(t)}(t-\eta \KK(t)) < t-b\KK(t)+(\sqrt{2\lambda^* b}-2\eta)\KK(t)\right)&\leq Ce^{-C'\KK(t)}, \nonumber\\
P\left(D^{max}_{a\KK(t),b\KK(t)}(t+\eta \KK(t)) > t-a\KK(t)+(\sqrt{2\lambda^* b}+2\eta)\KK(t)\right)&\leq Ce^{-C'\KK(t)}.
\end{align}
\end{lemma}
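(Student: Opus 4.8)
The plan is to follow the strategy of \cite[Lemma~7.13]{BBpersistence} while carrying all exponential rates through the estimates; the new ingredients are a Chernoff upper bound and a moderate-deviation \emph{lower} bound for sums of the form $S_1(\cdot)$ in \eqref{sdef}, plus control of $\KK$ at the scales that appear. Write $K:=\KK(t)$. Three reductions are used repeatedly. (i) Since the degree of a vertex evolves as $D^{(i)}_a(s)=\Phi_1(\xi^{(i)}(s))$ with $\xi^{(i)}$ an i.i.d.\ copy of $\xi$ whose $l$-th arrival is $S_1(l)$, for $y\in(0,\Phi_1(\infty))$ and $l:=\lceil\Phi_1^{-1}(y)\rceil$ one has $\{D^{(i)}_a(s)\ge y\}=\{S_1^{(i)}(l)\le s\}$, with $\mathrm{Var}(S_1(l))=\Phi_2(l+1)$ and $E[S_1(l)]=\Phi_1(l+1)=y+O(1)$ (as $f$ is bounded below by a positive constant). (ii) $s\mapsto D^{(i)}_a(s)$ is pathwise non-decreasing. (iii) Under (A1) and $\Phi_2(\infty)=\infty$, $\KK(s)=o(s)$ as $s\to\infty$, since $\KK(s)/s=\Phi_2(\Phi_1^{-1}(s))/\Phi_1(\Phi_1^{-1}(s))$ is an average of the numbers $1/f(i)\to 0$ over a diverging index range; consequently, by \eqref{contK} and monotonicity of $\KK$, $\KK$ evaluated at any age $t(1+o(1))$ equals $(1+o(1))K$, and $\KK(cK)=O(\KK(K))=o(K)$ for every constant $c$.

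\emph{First bound.} Put $c=t-\eta K$, $y=t-bK+(\sqrt{2\lambda^*b}-2\eta)K$, and $s:=c-bK=t-(\eta+b)K$, so $y=s+(\sqrt{2\lambda^*b}-\eta)K$ and $E[S_1(l)]-s=(\sqrt{2\lambda^*b}-\eta)K+O(1)$. Fix small $\delta>0$. By Lemma \ref{lem7.12}, off an event of probability $\le Ce^{-C'K}$ there are $\ge N_0:=e^{(\lambda^*-\delta)bK}$ vertices born in $[aK,bK]$. Conditioning on the branching process up to time $bK$, the future reproductions of these vertices are conditionally independent (branching property), so by (ii), $P(D^{max}_{aK,bK}(c)<y\mid\mathcal F_{bK})\le\prod_v(1-p_v)$, where $p_v$ is the conditional probability that $v$ has $\ge l$ children by time $c$. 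Writing the arrival $\sigma^{(v)}_l$ as its value at time $bK$ plus the next exponential increment plus a sum $R'_v$ of the remaining scaled exponentials, and using memorylessness of that increment, $p_v=P(\tilde E_v+R'_v\le s\mid\mathcal F_{bK})$ where $\tilde E_v$ is exponential and $R'_v$ satisfies $E[R'_v]=y+O(1)-\Phi_1(k_v+2)$, $\mathrm{Var}(R'_v)=K(1+o(1))-\Phi_2(k_v+2)$ ($k_v=$ child count of $v$ at time $bK$). Excluding a further event of probability $\le Ce^{-C'K}$ that caps $\Phi_1(k_v+2)=O(K)$ for all such $v$ (so that $\Phi_2(k_v+2)=\KK(O(K))=o(K)$ and $\mathrm{Var}(R'_v)=(1+o(1))K$), the lower-deviation needed for $p_v$ is at most $(\sqrt{2\lambda^*b}-\eta)K(1+o(1))$. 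The crux is then a moderate-deviation lower bound: tilting $R'_v$ to mean equal to the target (forcing an $O(1)$ tilt parameter) and applying a Berry--Esseen estimate under the tilted law --- valid since the tilted summands remain exponential and $\Phi_3=o(\Phi_2)$ --- gives $p_v\ge\exp(-\tfrac{1+o(1)}{2}(\sqrt{2\lambda^*b}-\eta)^2K)=:\bar p$. Hence $\bar pN_0=\exp(K[(\lambda^*-\delta)b-\tfrac{1+o(1)}{2}(\sqrt{2\lambda^*b}-\eta)^2])$, and since $\lambda^*b-\tfrac12(\sqrt{2\lambda^*b}-\eta)^2=\eta\sqrt{2\lambda^*b}-\tfrac{\eta^2}{2}>0$ (the hypothesis $\eta<\sqrt{2\lambda^*b}/2$ gives this with room to spare), choosing $\delta$ small makes $\bar pN_0\ge e^{cK}$ for some $c>0$, so $\prod_v(1-p_v)\le e^{-\bar pN_0}\le e^{-C'K}$; collecting the exceptional events gives the first bound.

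\emph{Second bound.} Put $c=t+\eta K$ and $y=t-aK+(\sqrt{2\lambda^*b}+2\eta)K=s'+(\sqrt{2\lambda^*b}+\eta)K$ with $s':=c-aK$; here a first-moment estimate suffices. With $g(s):=P(D_0(s)\ge y)=P(S_1(l)\le s)$, $l:=\lceil\Phi_1^{-1}(y)\rceil$,
\[
E\big[\#\{v\text{ born in }[aK,bK]:D^{(v)}_a(c-B_v)\ge y\}\big]\le g(s')\,E[n[0,bK]]\le Ce^{\lambda^*bK}g(s'),
\]
using that $g$ is non-decreasing, $c-B_v\le s'$, and $E[n[0,bK]]\le E[|BP_f(bK)|]=O(e^{\lambda^*bK})$ (Theorem \ref{brlim}). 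Since $E[S_1(l)]-s'\ge y-s'=(\sqrt{2\lambda^*b}+\eta)K$ and $\Phi_2(l+1)=\KK(y+O(1))=(1+o(1))K$, a Chernoff bound (optimizing $e^{-\theta s'}E[e^{\theta S_1(l)}]$ over $\theta<0$, with $-\log(1-\theta/f(i))\le\theta/f(i)+\theta^2/(2f(i)^2)$) gives $g(s')\le\exp(-\tfrac{1-o(1)}{2}(\sqrt{2\lambda^*b}+\eta)^2K)$. As $\lambda^*b-\tfrac12(\sqrt{2\lambda^*b}+\eta)^2=-\eta\sqrt{2\lambda^*b}-\tfrac{\eta^2}{2}<0$ for every $\eta>0$, the displayed expectation is $\le e^{-C'K}$, and $P(D^{max}_{aK,bK}(c)>y)\le P(D^{max}_{aK,bK}(c)\ge y)\le E[\#\{\cdots\}]$ finishes the proof.

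I expect the main obstacle to be the moderate-deviation lower bound on $p_v$ in the first part, which needs the change-of-measure plus Berry--Esseen argument with careful control of the tilted mean, variance and third cumulant in terms of $\Phi_1,\Phi_2,\Phi_3$, together with the fiddly (but BBpersistence-style) branching bookkeeping --- conditioning at time $bK$, exploiting exponential memorylessness to circumvent the non-memorylessness of $\xi$, and discarding vertices with atypically large child count $k_v$ at time $bK$. The second (upper) bound is routine once the first-moment reduction and the Chernoff estimate are in place. A recurring secondary point is the systematic use of \eqref{contK} with $\KK(s)=o(s)$ to identify $\KK$ at ages $t(1+o(1))$ with $(1+o(1))K$ and $\KK$ at $O(K)$ scales with $o(K)$, which is what makes the exponents collapse to $\tfrac12(\sqrt{2\lambda^*b}\pm\eta)^2$.
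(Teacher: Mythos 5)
Your proposal is correct and follows the same skeleton as the paper's proof: use Lemma \ref{lem7.12} to guarantee (off an exponentially small event) at least $e^{(\lambda^*-\delta)b\KK(t)}$ births in $[a\KK(t),b\KK(t)]$, combine with a per-vertex Gaussian-type moderate-deviation estimate for the degree, and finish via independence (first bound, giving a doubly-exponentially small product) and a first-moment/union argument (second bound), with the exponent comparison $\lambda^*b-\tfrac12(\sqrt{2\lambda^*b}\mp\eta)^2\gtrless 0$ exactly as in the paper. The genuine difference is in how the inner per-vertex estimates are obtained: the paper simply imports them from \cite{BBpersistence} (estimate (7.52) for the lower-tail bound and the calculations (7.57)--(7.60) for the upper tail), whereas you re-derive them from scratch --- a Chernoff bound for $P(S_1(l)\le s')$ and an exponential-tilting plus Berry--Esseen argument for the moderate-deviation lower bound --- together with the observation $\KK(t(1+o(1)))=(1+o(1))\KK(t)$ from \eqref{contK}, which is precisely where those cited estimates come from; so your write-up is self-contained where the paper is not. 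A second, more cosmetic difference is the bookkeeping for the first bound: you condition on $\mathcal F_{b\KK(t)}$, use memorylessness, and control the accumulated child counts $k_v$ on an extra exceptional event, while the paper avoids this entirely by using that the processes $D^{(i)}_{a\KK(t)}(\cdot)$ are i.i.d.\ and monotone, so that $D^{(i)}(c-B^{(i)})\ge D^{(i)}(c-b\KK(t))$ reduces everything to i.i.d.\ copies run for the fixed time $t-(\eta+b)\KK(t)$; you could shortcut your $k_v$ discussion the same way (or by the stochastic domination ``ignore the head start''), since the head start only helps. Both routes give the stated $Ce^{-C'\KK(t)}$ bounds.
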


\begin{proof}
The proof follows from that of Lemma 7.13 in \cite{BBpersistence}. The major difference is that Lemma \ref{lem7.12} above is used to produce quantitative bounds for the events in \eqref{ev} (in Lemma 7.13 of \cite{BBpersistence}, the probabilities were shown to converge to zero without quantitative bounds). 

Recall $\delta_1$ from Lemma \ref{lem7.12} for the given $a,b$. Take $\ep \in (0,1), \, \delta \in (0, \delta_1)$ small enough such that 
\begin{equation}\label{ed}
\sqrt{2\lambda^* b}-\eta < \frac{\sqrt{2(\lambda^*-\delta)b}}{(1+\ep)^2}, \  \ \sqrt{2\lambda^* b} + \eta >  \frac{\sqrt{2(\lambda^* + \delta)b}}{(1- \ep)^2}.
\end{equation}
For any $z\geq -\eta/2, t\geq t_2(z)$ for some $t_2=t_2(z)$ depending on $z$, estimate (7.52) in \cite{BBpersistence} produces the following estimate for any $i \in \mathbb{N}$:
$$
P\left(D_{a\KK(t)}^{(i)}(t - \eta \KK(t) - b\KK(t)) < t-b\KK(t)+z\KK(t)\right)
\le 1-\exp\left( -\frac{(1+\ep)^4(z+\eta)^2\KK(t)}{2}\right).
$$
Using this and the first bound in Lemma \ref{lem7.12}, we obtain
\begin{align*}
&P\left(D^{max}_{a\KK(t),b\KK(t)}(t-\eta \KK(t))< t-b\KK(t)+z\KK(t)\right)\\
\leq & P( n[a\KK(t),b\KK(t)]\leq e^{(\lambda^* -\delta)b\KK(t)})\\
&\qquad + P\left(D_{a\KK(t)}^{(i)}(t - \eta \KK(t) - b\KK(t)) < t-b\KK(t)+z\KK(t) \ \text{for all} \ i \le \lfloor e^{(\lambda^* -\delta)b\KK(t)} \rfloor + 1\right)\\
\leq & C_1e^{-C_2\KK(t)} + \left(1-\exp\left( -\frac{(1+\ep)^4(z+\eta)^2\KK(t)}{2}\right)\right)^{e^{(\lambda^*-\delta)b\KK(t)}}\\
\leq& C_1e^{-C_2\KK(t)}+\exp\left(-\exp\left((\lambda^*-\delta)b\KK(t)-\frac{(1+\ep)^4(z+\eta)^2\KK(t)}{2} \right)\right).
\end{align*}
Note that, by \eqref{ed} and as $\eta\in (0, \sqrt{2\lambda^* b}/2)$, $\frac{\sqrt{2(\lambda^*-\delta)b}}{(1+\ep)^2}-\eta > \sqrt{2\lambda^* b}- 2\eta >0$. For any $-\eta/2\leq z< \frac{\sqrt{2(\lambda^*-\delta)b}}{(1+\ep)^2}-\eta$ we have 
$$-\exp\left((\lambda^*-\delta)b\KK(t)-\frac{(1+\ep)^4(z+\eta)^2\KK(t)}{2} \right)\leq -\exp( C_{z,\eta}\KK(t))$$
for some positive constant $C_{z,\eta}$ depending only on $z,\eta$. Hence, taking $z=\sqrt{2\lambda^* b}-2\eta$,
\begin{align*}
&P\left(D^{max}_{a\KK(t),b\KK(t)}(t-\eta \KK(t))< t-b\KK(t)+z\KK(t)\right)\\
&\leq C_1e^{-C_2\KK(t)}+\exp(-\exp(C_{z,\eta}\KK(t)))\\
&\leq C_3e^{-C_2\KK(t)}
\end{align*}
for sufficiently large $t$, where we have used $\KK(t) = \Phi_2\circ \Phi_1^{-1}(t) \rightarrow \infty$ as $t \rightarrow \infty$ (which is a consequence of $\Phi_1(\infty)=\Phi_2(\infty) = \infty$). Note that the constants $C_2,C_3$ can depend on $b, \eta$. This proves the first bound in \eqref{ev}.

The second bound in \eqref{ev} follows similarly upon using the second bound in Lemma \ref{lem7.12} and following the calculations (7.57)-(7.60) in \cite{BBpersistence}:
\begin{align*}
&P\left(D^{max}_{a\KK(t),b\KK(t)}(t+\eta \KK(t))> t-a\KK(t)+w\KK(t)\right)\\
\leq& P(n[0,b\KK(t)]>e^{(\lambda^*+\delta)b\KK(t)}) +\exp\left( (\lambda^*+\delta)b\KK(t)-\frac{ (w-\eta)^2(1-\ep)^4\KK(t)}{2}\right)\\
\leq &Ce^{-\delta b\KK(t)} +\exp(-C_{w,\eta}\KK(t))
\end{align*}
where $w=\sqrt{2\lambda^* b}+2\eta$.
\end{proof}

\begin{lemma}\label{lem7.14}
Assume $\Phi_2(\infty)=\infty$. Moreover, assume that there exists positive constants $t'$, $D$ such that $\KK(3t)\leq D\KK(t)$ for all $t\geq t'$. Then there exists $A_0>0$ such that 
$$P( D^{max}_{A_0\KK(t),t}(t+\KK(t))>t)\leq Ce^{-C'\KK(t)}$$
for some constants $C,C'>0$.
\end{lemma}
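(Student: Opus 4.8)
The plan is to bound the left-hand side by a first-moment estimate, being careful to \emph{weight individuals by their birth times}; a naive union bound over all vertices fails, and recognizing why is the heart of the matter. By definition $D^{(i)}_a(s)=\Phi_1(\xi^{(i)}(s))$ with $\Phi_1$ strictly increasing, so the event $\{D^{max}_{A_0\KK(t),t}(t+\KK(t))>t\}$ says that some individual born in the window $[A_0\KK(t),t]$ has strictly more than $\Phi_1^{-1}(t)$ offspring at time $t+\KK(t)$. Let $N$ be the number of such individuals, so $P(D^{max}_{A_0\KK(t),t}(t+\KK(t))>t)\le P(N\ge 1)\le E[N]$. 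The subtlety is that the population at time $t$ has size $\asymp e^{\lambda^* t}$, which, since $\KK(t)=o(t)$ (a standard consequence of $f(i)\to\infty$ and $\Phi_1(\infty)=\infty$), dwarfs $e^{c\KK(t)}$; so pairing $e^{\lambda^* t}$ with the $e^{-c\KK(t)}$-probability that a fixed youngish vertex acquires $>\Phi_1^{-1}(t)$ offspring yields nothing. What rescues the argument is that only $\asymp e^{\lambda^* b}$ individuals are born by time $b$, so the ``early'' part of the window, near $b=A_0\KK(t)$ where the relevant probability is largest, carries only $\asymp e^{\lambda^* A_0\KK(t)}$ vertices, which $A_0$ large can overwhelm.

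To make this precise I would compute $E[N]$ via a weighted first-moment (many-to-one) identity. In the Ulam--Harris construction each individual $u$ carries an independent copy $\xi_u$ of $\xi$ recording its reproduction relative to its birth time $B_u$, and $B_u$ is measurable with respect to the reproduction of the strict ancestors of $u$, hence independent of $\xi_u$. Summing the per-individual expectations and writing $\nu$ for the intensity measure of the birth point process gives
$$E[N]=\int_{[A_0\KK(t),\,t]} g(b)\,\nu(db),\qquad g(b):=P\big(\xi(t+\KK(t)-b)>\Phi_1^{-1}(t)\big).$$
For $\nu$ I would use the Malthusian bound $\nu([0,\tau])=E\big(n[0,\tau]\big)\le C_0 e^{\lambda^*\tau}$, which is exactly $\sup_{s\ge0}E\big(e^{-\lambda^* s}n[0,s]\big)<\infty$ from \cite[Proposition 2.2]{nerman1981convergence}, as already invoked in the proof of Lemma~\ref{lem7.12}. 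For $g$, note $\xi(s)>\Phi_1^{-1}(t)$ iff $S_1(\ell_t)\le s$, where $\ell_t$ is the least integer exceeding $\Phi_1^{-1}(t)$, and a Chernoff bound on $S_1(\ell_t)=\sum_{i=1}^{\ell_t}E_i/f(i)$ gives, for any $\theta>0$ and using $\log(1+x)\ge x-x^2/2$ on $[0,\infty)$,
$$g(b)\le \exp\!\Big(\theta(t+\KK(t)-b)-\theta\,\Phi_1(\ell_t+1)+\tfrac{\theta^2}{2}\,\Phi_2(\ell_t+1)\Big).$$
Here $\Phi_1(\ell_t+1)\ge\Phi_1(\ell_t)\ge t$ by the choice of $\ell_t$, while for $t$ large $\ell_t+1\le\Phi_1^{-1}(t)+2\le\Phi_1^{-1}(3t)$, so $\Phi_2(\ell_t+1)\le\KK(3t)\le D\,\KK(t)$ by the doubling hypothesis; hence $g(b)\le\exp\!\big(\theta(\KK(t)-b)+\tfrac{D\theta^2}{2}\KK(t)\big)$.

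Finally I would discretize the window into unit intervals, use that $g$ is non-increasing in $b$ together with $\nu([j,j+1))\le C_0 e^{\lambda^*(j+1)}$, and sum the resulting geometric series with ratio $e^{\lambda^*-\theta}$ upon fixing any $\theta>\lambda^*$ (say $\theta=2\lambda^*$); since the window starts at $b\ge A_0\KK(t)$ this yields
$$E[N]\le C\exp\!\Big(\KK(t)\,\big[\theta+\tfrac{D\theta^2}{2}-(\theta-\lambda^*)A_0\big]\Big).$$
Choosing $A_0$ large enough, depending only on $\lambda^*$ and $D$, so that the bracket is a strictly negative constant gives $E[N]\le Ce^{-C'\KK(t)}$, and hence the lemma for all large $t$; small $t$ are absorbed by enlarging $C$ (and if $A_0\KK(t)>t$ the event is empty). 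The step I expect to be the main obstacle is setting up and justifying the weighted first-moment identity cleanly — in particular the independence $\xi^{(i)}\perp B^{(i)}$ and the intensity bound on $\nu$ — together with the constant-bookkeeping that makes a single large $A_0$ annihilate the exponent; by contrast, the two elementary facts $\Phi_1(\ell_t)\ge t$ and $\ell_t+1\le\Phi_1^{-1}(3t)$, which are precisely what let the doubling hypothesis on $\KK$ enter the estimate, are routine.
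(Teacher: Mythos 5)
Your argument is correct, and it reaches the bound by a genuinely different route than the paper. The paper's proof is a truncation-plus-blocks argument: it introduces the event $\EE_A=\{\sup_{s\ge0}e^{-\lambda^* s}n[0,s]\le A\}$, controls $P(\EE_A^c)\le C\log A/A^2$ via Corollary 3.8 of \cite{jordan}, partitions the birth window $[A_0\KK(t),t]$ into blocks of width $\KK(t)$ and imports the quantitative block estimates (7.63)--(7.64) of \cite{BBpersistence} (a union bound per block on $\EE_A$, with decay $A\exp(-(j-1)^2\KK(t)/8D)$), and finally optimizes the truncation level by taking $A=\exp((j_0-1)^2\KK(t)/16D)$; the constant $A_0$ is the $j_0$ supplied by (7.63). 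You instead run an unconditional (annealed) first-moment computation: the many-to-one identity $E[N]=\int g(b)\,\nu(db)$ with the mean-intensity bound $E\,n[0,\tau]\le C_0e^{\lambda^*\tau}$ (the same Nerman input already used for Lemma \ref{lem7.12}), a lower-tail Chernoff bound on $S_1(\ell_t)$ giving $g(b)\le\exp(\theta(\KK(t)-b)+\tfrac{D\theta^2}{2}\KK(t))$ -- this is where the doubling hypothesis enters, via $\Phi_2(\ell_t+1)\le\KK(3t)\le D\KK(t)$, playing the role it plays inside (7.63) of \cite{BBpersistence} -- and a geometric sum with $\theta>\lambda^*$, after which a single large $A_0$ (depending only on $\lambda^*,D$) makes the exponent negative. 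What your approach buys is self-containedness and economy of hypotheses: no pathwise control of $\sup_s e^{-\lambda^*s}n[0,s]$ (hence no appeal to the second-moment-type Corollary 3.8 of \cite{jordan}), no import of (7.63)--(7.64), and no balancing of a truncation parameter; only the first moment of the population and the independence of each individual's reproduction process from its birth time are used, both of which are justified exactly as you indicate. The paper's route, by contrast, reuses existing machinery and yields the sharper quadratic-in-$j$ per-block decay, which is not needed for the stated conclusion. Your handling of the boundary issues (monotonicity of $g$ for the unit-interval discretization, $\Phi_1(\ell_t)\ge t$, $\ell_t+1\le\Phi_1^{-1}(3t)$ for large $t$, emptiness of the window when $A_0\KK(t)>t$, and absorbing small $t$ into $C$) is sound, so the bound $Ce^{-C'\KK(t)}$ follows as claimed.
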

\begin{proof}
Define the event 
$$\EE_A :=\{ \sup_{t\geq 0} e^{-\lambda^* t}n[0,t]\leq A\}.$$
By Corollary 3.8 in \cite{jordan},
$$P(\EE^c_A)\leq C\log A/A^2.$$
By (7.63) of \cite{BBpersistence}, we have some $j_0\geq 2$ and $t^*>0$ such that for all $t \ge t^*$ and $ j_0 \le j \le 1 + (2t/\KK(t))$,
$$P(D^{max}_{j\KK(t),(j+1)\KK(t)}(t+\KK(t))>t, \EE_A)\leq A\exp\left(-\frac{(j-1)^2}{8D}\KK(t)\right).$$
By (7.64) of \cite{BBpersistence}, we have for any $t\geq t^*$,
\begin{align*}
P(D^{max}_{j_0\KK(t),t}(t+\KK(t))>t, \EE_A)&\leq  \sum_{j=j_0}^{1+\lfloor t/\KK(t) \rfloor} P(D^{max}_{j\KK(t),(j+1)\KK(t)}(t+\KK(t))>t, \EE_A)\\
& \leq \sum_{j=j_0}^{1+\lfloor t/\KK(t) \rfloor} A\exp\left(-\frac{(j-1)^2}{8D}\KK(t)\right)\leq C A\exp\left(-\frac{(j_0-1)^2}{8D}\KK(t)\right),
\end{align*}
where $C$ is a positive constant not depending on $t,A$.
Finally,
\begin{align*}
P(D^{max}_{j_0\KK(t),t}(t+\KK(t))>t) &\leq P(D^{max}_{j_0\KK(t),t}(t+\KK(t))>t, \EE_A)+P(\EE^c_A)\\
&\leq C A\exp\left(-\frac{(j_0-1)^2}{8D}\KK(t)\right)+C\log A/A^2.
\end{align*}
The lemma now follows upon taking $A=\exp\left(\frac{(j_0-1)^2}{16D}\KK(t)\right)$.
\end{proof}

\mn
\textit{Proof of Lemma \ref{maxposition}.}
Write $u^*:= \lambda^*/2$. Take any $p>0$, whose value will be appropriately chosen later. Let $t^+_n= \frac{1}{\lambda^*}\log n+p\KK(\frac{1}{\lambda^*}\log n)$ and $t^{-}_n= \frac{1}{\lambda^*}\log n-p\KK(\frac{1}{\lambda^*}\log n)$. Note that 
\begin{align*}
&P\left( \II^*_c(T_n)\leq  (u^*+\ep)\KK(\frac{1}{\lambda^*}\log n)\right)\\
\geq& P\left(\II^*_c(s)\leq   (u^*+\ep)\KK(\frac{1}{\lambda^*}\log n) \text{ for all $s\in[t^-_n,t^+_n]$},T_n\in[t^{-}_n, t^+_n]\right).
\end{align*}
Hence,
\begin{align}\label{in0}
&P\left( \II^*_c(T_n) >  (u^*+\ep)\KK(\frac{1}{\lambda^*}\log n)\right)\nonumber\\
\leq &P\left(\II^*_c(s) >   (u^*+\ep)\KK(\frac{1}{\lambda^*}\log n) \text{ for some $s\in[t^-_n,t^+_n]$}\right)+P(T_n>t^+_n)+P(T_n<t^-_n).
\end{align}
 Consider the function 
 $$H(u)=-u+\sqrt{2\lambda^* u}, \quad u\geq 0.$$
 This function has a unique maximum at $u=u^*:=\lambda^*/2$ with maximum value $H(u^*)=\lambda^*/2=u^*$. Recall $A_0$ from Lemma \ref{lem7.14} and take $A_1:=\lambda^*+A_0$.
 
 Let $\ep_0\in (0,\lambda^*/4)$ be small enough that $\max\{H(u^*)-H(u^*-\ep), H(u^*)-H(u^*+\ep)\}< \sqrt{2\lambda^*(u^*-\ep)}$ for all $\ep<\ep_0$. Fix any $\ep\in(0,\ep_0)$. Let $\zeta:=\min\{H(u^*)-H(u^*-\ep), H(u^*)-H(u^*+\ep)\}$. 
Partition the set $(u^*+\ep,A_1]$ into disjoint intervals $[u_i,v_i)$, $i\in \mathbb{J}$, of mesh $\zeta/3$. Let $p=\min\{1, \zeta/12\}$. Then for any $t>0$,
\begin{align*}
&P( \II^*_c(s) \leq (u^*+\ep)\KK(t) \text{ for all $s\in[t-p\KK(t), t+p\KK(t)]$})\\
&\geq P\bigg( D^{max}_{(u^* - \ep)\KK(t),u^*\KK(t)}(s)\geq t+(H(u^*)-\zeta/3)\KK(t),\\
&\quad D^{max}_{u_i\KK(t),v_i\KK(t)}(s)\leq t+(H(v_i)+\zeta/2)\KK(t) \text{ for all $i \in \mathbb{J}$},\\
&\quad D^{max}_{A_1\KK(t),t}(s)\leq t, \text{ for all $s\in[t-p\KK(t), t+p\KK(t)]$}\bigg).
\end{align*}
Hence, taking complements and applying the union bound,
\begin{align*}
&P\left(\II^*_c(s) >   (u^*+\ep)\KK(t) \text{ for some $s\in[t-p\KK(t), t+p\KK(t)]$}\right)\\
 &\le P\bigg(D^{max}_{(u^* - \ep)\KK(t),u^*\KK(t)}(t-p\KK(t)) < t+(H(u^*)-\zeta/3)\KK(t)\bigg)\\
 &\quad + \sum_{i \in \mathbb{J}} P\left(D^{max}_{u_i\KK(t),v_i\KK(t)}(t+p\KK(t)) > t+(H(v_i)+\zeta/2)\KK(t)\right)\\
 &\quad + P\left(D^{max}_{A_1\KK(t),t}(t+p\KK(t)) > t\right).
\end{align*}
The first term in the product can be estimated using Lemma \ref{lem7.13} by taking $\eta=\zeta/6, a=u^*-\ep,b=u^*$, and the sum can be estimated using the same lemma by taking $\eta=\zeta/12, a=u_i,b=v_i$ for each $i\in \mathbb{J}$ and noting that the cardinality of $\mathbb{J}$ does not depend on $t$. The last term can be estimated from Lemma \ref{lem7.14} upon noting that $A_1>A_0$. Hence, we obtain upon putting $t = \frac{1}{\lambda^*} \log n$,
\beq\label{term1}
P\left(\II^*_c(s) >   (u^*+\ep)\KK(\frac{1}{\lambda^*}\log n) \text{ for some $s\in[t^-_n,t^+_n]$}\right)\leq Ce^{-C'\KK(\frac{1}{\lambda^*}\log n)}.
\eeq
To estimate the last two terms in \eqref{in0}, we will use Lemma \ref{Tbounds} with $\gamma_n := \KK(\frac{1}{\lambda^*}\log n)$. Note that, as $\Phi_2(\infty) = \infty$, $\lim_{t \rightarrow \infty} \KK(t) = \infty$. Moreover, as $f(n) \rightarrow \infty$ as $n \rightarrow \infty$ by Assumption (A1), for any $\delta>0$, there exists $n_{\delta} \in \mathbb{N}$ such that $f(n) > \delta^{-1}$ for all $n \ge n_{\delta}$. Hence, for $n > n_{\delta}$,
$$
\Phi_2(n) \le \sum_{k=1}^{n_{\delta}} \frac{1}{f^2(k)} + \delta\sum_{k=n_{\delta} + 1}^{n} \frac{1}{f(k)} \le  \sum_{k=1}^{n_{\delta}} \frac{1}{f^2(k)} + \delta\Phi_1(n).
$$
In particular, $\limsup_{n \rightarrow \infty}\frac{\Phi_2(n)}{\Phi_1(n)} \le \delta$. As $\delta>0$ is arbitrary, $\limsup_{n \rightarrow \infty}\frac{\Phi_2(n)}{\Phi_1(n)} = 0$.
Hence,
\begin{equation}\label{ktozero}
\lim_{t \rightarrow \infty} \frac{\KK(t)}{t} = \lim_{t \rightarrow \infty}\frac{\Phi_2 \circ \Phi_1^{-1}(t)}{\Phi_1 \circ \Phi_1^{-1}(t)}=0.
\end{equation}
Therefore, $\gamma_n := \KK(\frac{1}{\lambda^*}\log n)$ satisfies the hypotheses of Lemma \ref{Tbounds}, and thus
\beq\label{term2}
P(T_n<t^-_n)+P(T_n> t^+_n)\leq Ce^{-C'\KK(\frac{1}{\lambda^*}\log n)}.
\eeq
Using \eqref{term1} and \eqref{term2} in \eqref{in0}, we have 
$$
P\left( \II^*_c(T_n) > (u^*+\ep)\KK(\frac{1}{\lambda^*}\log n)\right)\leq Ce^{-C'\KK(\frac{1}{\lambda^*}\log n)}.
$$
\qed

\subsection{Height of branching processes}\label{htsec}

Recall the continuous time branching process $BP_f(t)$, viewed as a labelled tree (vertices labelled in order of arrival). Let $H_t$ denote the height of this tree at time $t$, that is, the maximal distance of a vertex in $BP_f(t)$ from the root. We will need the following tail estimate for the height from \cite{HJarxiv}.

\mn
\begin{lemma}[Lemma 13.15 of \cite{HJarxiv}]\label{height}
 For every $r>0$, there exists $c_r$ such that, for large $t$,
$$P(H_t\geq c_r t)\leq e^{-rt}.$$
\end{lemma}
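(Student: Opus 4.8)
The plan is to bound the height by a first--moment estimate on generation sizes, exactly as in the classical upper bounds for the height of branching random walks and continuous time branching processes. For $k\ge 0$ let $N_k(t)$ denote the number of individuals of $BP_f(t)$ in generation $k$ (i.e.\ at graph distance exactly $k$ from the root) that have been born by time $t$. Since ancestors are born before their descendants, an individual at distance $\ge k$ from the root forces the presence of one at distance exactly $k$, so $\{H_t \ge k\}=\{N_k(t)\ge 1\}$; as $H_t$ is $\mathbb{N}_0$-valued, Markov's inequality gives
\[
P(H_t \ge k) \le \mathbb{E}[N_k(t)], \qquad k \ge 1 .
\]
It therefore suffices to show that $\mathbb{E}[N_k(t)]$ decays geometrically in $k$ once $k$ is a sufficiently large multiple of $t$, uniformly in $t$.

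First I would identify $\mathbb{E}[N_k(t)]$ with the $k$-fold convolution of the intensity measure $\mu:=\mathbb{E}[\xi(\cdot)]$. Conditioning on the root's offspring point process (which has intensity $\mu$) and using that the subtrees rooted at the generation-$1$ individuals are independent copies of $BP_f$, Campbell's formula yields the recursion $\mathbb{E}[N_k(t)]=\int_{[0,t]}\mathbb{E}[N_{k-1}(t-s)]\,\mu(ds)$ with $\mathbb{E}[N_0(t)]=1$, hence $\mathbb{E}[N_k(t)]=\mu^{*k}([0,t])$; Assumption (A2) makes $\mu$ locally finite, so these convolutions are well defined. Then I would tilt exponentially: for any $\theta$ with $\hat\mu(\theta)<\infty$,
\[
\mu^{*k}([0,t]) \le e^{\theta t}\int_0^\infty e^{-\theta s}\,\mu^{*k}(ds) = e^{\theta t}\,\hat\mu(\theta)^k .
\]
By Assumption (A2), $\underline{\lambda}<\infty$ and $\lim_{\lambda\downarrow\underline{\lambda}}\hat\mu(\lambda)>1$, while $\hat\mu$ is finite, continuous and strictly decreasing on $(\underline{\lambda},\infty)$ with $\hat\mu(\lambda^*)=1$; in particular $\lambda^*>\underline{\lambda}$, and for any fixed $\theta_0>\lambda^*$ we have $0<\hat\mu(\theta_0)<1$. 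Given $r>0$, set $c_r:=(\theta_0+r)/(-\log\hat\mu(\theta_0))>0$ and take $k=\lceil c_r t\rceil$; since $H_t$ is integer-valued and $\hat\mu(\theta_0)<1$,
\[
P(H_t \ge c_r t) \le \mathbb{E}\big[N_{\lceil c_r t\rceil}(t)\big] \le e^{\theta_0 t}\,\hat\mu(\theta_0)^{\lceil c_r t\rceil} \le e^{\theta_0 t}\,\hat\mu(\theta_0)^{c_r t} = e^{(\theta_0+c_r\log\hat\mu(\theta_0))t} = e^{-rt},
\]
which is the claim (in fact for every $t>0$, not just large $t$).

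I do not expect a serious obstacle here. The only step deserving care is the identification $\mathbb{E}[N_k(t)]=\mu^{*k}([0,t])$: this is a routine many-to-one computation, carried out by induction on $k$ together with Fubini/Campbell, with integrability supplied by the bound $\mu^{*k}([0,t])\le e^{\theta_0 t}\hat\mu(\theta_0)^k<\infty$ derived above. One should also record that the generations of $BP_f(t)$ are a.s.\ finite --- $\xi(t)<\infty$ a.s.\ because $\sigma_l=S_1(l)\to\infty$ a.s., which follows from $\sum_i f(i)^{-1}=\infty$ (a consequence of the at-most-linear growth in (A1)), the same fact already used in the proof of Lemma \ref{lefttail}. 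All remaining manipulations are elementary.
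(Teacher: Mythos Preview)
The paper does not supply its own proof of this lemma; it is quoted verbatim from \cite{HJarxiv}. Your argument is correct and is in fact the standard first-moment (Chernoff) bound for the height of a Crump--Mode--Jagers process, which is precisely the method behind Lemma~13.15 of \cite{HJarxiv}. The two key ingredients you isolate --- the many-to-one identity $\mathbb{E}[N_k(t)]=\mu^{*k}([0,t])$ and the Laplace tilt $\mu^{*k}([0,t])\le e^{\theta t}\hat\mu(\theta)^k$ --- are exactly the ones the present paper invokes later in the proof of Lemma~\ref{profile} (there attributed to \cite{Chernoff} and \cite{Kingman}), so your proof meshes seamlessly with the paper's toolkit. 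Your observation that the bound holds for all $t>0$, not only large $t$, is also correct.
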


%
%

\section{Martingales tracking degrees}\label{mardeg}
For $A\in \mathbb{N}_0$, recall the definition of $f_A(\cdot)$ and $\xi_A(\cdot)$ from \eqref{pprocess}. Define the process
\beq\label{madef}
M_A(t):=\sum_{i=1}^{\xi_A(t)}\frac{1}{f_A(i)}-t, \ t\geq 0,
\eeq
where we recall our convention that $\sum_{i=1}^0\frac{1}{f_A(i)} =0$. This process turns out to be the crucial technical tool used in \cite{BBpersistence}, as well as the current article, to track degrees of individual vertices in the random graph process. We collect some properties of this process, proved in \cite{BBpersistence}. For $A \in \mathbb{N}_0$, write $f_*(A) := \inf_{i \ge A + 1} f(i)$.

\mn
\begin{lemma}[Lemmas 7.1, 7.3 and 7.4 in \cite{BBpersistence}]\label{mprop}
For any $A \in \mathbb{N}_0$, the process $M_A(\cdot)$ defined in \eqref{madef} is a martingale with respect to its natural filtration.

If $\Phi_2(\infty)<\infty$, then there exists positive constants $x_0,x_0',x_0'',C_1,C_2$ (independent of $A$) such that for any $A\in \mathbb{N}_0$,
\beq
P\left(\inf_{s<\infty} M_A(s)\leq -x\right) \leq 
\begin{cases}
C_1\exp(-C_2x^2) &\text{ if } x_0\leq x<x_0'\sqrt{f_*(A)},\\
C_1\exp(-C_2\sqrt{f_*(A)}x) &\text{ if }x\geq x_0'\sqrt{f_*(A)}.
\end{cases}
\eeq 
and 
\beq
P\left(\sup_{s<\infty} M_A(s)\geq x\right) \leq C_1\exp(-C_2x^2)\quad \text{ for all }x\geq x_0''.
\eeq
Moreover, if $\Phi_2(\infty) < \infty$, $M_A(t) \rightarrow M_A(\infty)$ almost surely, and the law of $M_A(\infty)$ is absolutely continuous with respect to the Lebesgue measure.
\end{lemma}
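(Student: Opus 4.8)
The plan is to reduce all four assertions to properties of the embedded random walk $N_n := M_A(\sigma_n^{(A)})$, where $\sigma_n^{(A)} := \sum_{i=1}^n E_i/f_A(i)$ is the time of the $n$-th event of $\xi_A$. Under Assumption~(A1) the birth rates $f_A(k+1) = f(A+k+1) \le C_f(A+k+1)$ are at most linear, so $\xi_A$ does not explode and $\sigma_n^{(A)} \uparrow \infty$ a.s. Evaluating \eqref{madef} at $\sigma_n^{(A)}$ gives
$$
N_n = \sum_{i=1}^n \frac{1}{f_A(i)} - \sum_{i=1}^n \frac{E_i}{f_A(i)} = \sum_{i=1}^n \frac{1-E_i}{f_A(i)},
$$
a sum of independent mean-zero terms with $\mathrm{Var}((1-E_i)/f_A(i)) = f_A(i)^{-2}$ and $\sum_{i\ge1} f_A(i)^{-2} = \sum_{j>A} f(j)^{-2} \le \Phi_2(\infty)$. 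Since $M_A$ decreases linearly between events and jumps up by $1/f_A(n)$ at $\sigma_n^{(A)}$, we have $\sup_{s<\infty} M_A(s) = \sup_{n\ge0} N_n$ and $\inf_{s<\infty} M_A(s) = \inf_{n\ge1}(N_n - 1/f_A(n))$, whence $\inf_n N_n - 1/f_*(A) \le \inf_s M_A(s) \le \inf_n N_n$.

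I would establish the martingale property directly from the Markov property and the compensator of $\xi_A$: conditionally on $\mathcal F_t$, the jump part of $M_A(t+\cdot) - M_A(t)$ has compensator $\int_0^\cdot (\text{rate})\cdot(\text{reward})\,du = \int_0^\cdot 1\,du$, since the rate of leaving state $j$ is $f_A(j+1)$ while the reward at that jump is $1/f_A(j+1)$; this cancels the deterministic $-t$, and integrability follows from $\langle M_A\rangle_t = \int_0^t f_A(\xi_A(s-)+1)^{-1}\,ds \le t/f_*(A)$. (No hypothesis on $\Phi_2(\infty)$ enters here.) When $\Phi_2(\infty)<\infty$, $E\langle M_A\rangle_\infty = \sum_{j>A}f(j)^{-2} \le \Phi_2(\infty) < \infty$, so $M_A$ is $L^2$-bounded and converges a.s.\ and in $L^2$ to $M_A(\infty)$, identified as $\lim_n N_n =: N_\infty$ by evaluating along $\sigma_n^{(A)}\uparrow\infty$. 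For absolute continuity, write $N_\infty = (1-E_1)/f_A(1) + \sum_{i\ge2}(1-E_i)/f_A(i)$: the first summand is an affine image of a standard exponential, hence has a density, and is independent of the a.s.\ convergent remainder, so $M_A(\infty)$ is a convolution with an absolutely continuous law and therefore has a density.

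The tail bounds come from exponential-martingale (Chernoff) estimates on $(N_n)$. For the upper tail, for $\theta>0$ the process $\exp(\theta N_n - \psi_n(\theta))$ with $\psi_n(\theta) = \sum_{i=1}^n (\theta/f_A(i) - \log(1+\theta/f_A(i))) \le \tfrac{\theta^2}{2}\Phi_2(\infty)$ is a nonnegative mean-one martingale; Doob's maximal inequality (using $\psi_n\le\psi_\infty$) and optimization over $\theta$ give $P(\sup_s M_A(s)\ge x) = P(\sup_n N_n \ge x) \le \exp(-x^2/(2\Phi_2(\infty)))$ for every $x>0$, with an $A$-independent constant. For the lower tail, first reduce via $P(\inf_s M_A(s)\le -x) \le P(\inf_n N_n \le -x + 1/f_*(A)) \le P(\inf_n N_n \le -x/2)$, valid once $x \ge 2/f_*(0)$ since $1/f_*(A) \le 1/f_*(0)$ and $f_*(0) = \inf_{i\ge1}f(i) > 0$. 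Then apply the same argument to $-N_n = \sum_{i=1}^n (E_i-1)/f_A(i)$; here $E\,e^{\theta(E_i-1)/f_A(i)}$ is finite only for $\theta < f_A(i)$, so one must keep $0<\theta<f_*(A)$, and for $0<\theta\le f_*(A)/2$ the elementary bound $-\log(1-u)-u\le u^2$ (valid for $u = \theta/f_A(i)\le \tfrac12$) gives cumulant sum $\le \theta^2\Phi_2(\infty)$. Optimizing $-\theta x + \theta^2\Phi_2(\infty)$ over $0<\theta\le f_*(A)/2$ produces a subgaussian bound $C_1 e^{-C_2 x^2}$ in a range of $x$ that widens with $f_*(A)$, and a lighter bound $C_1 e^{-C_2 \sqrt{f_*(A)}\,x}$ once $x$ is large enough that the ceiling $\theta\le f_*(A)/2$ becomes active; matching these against the thresholds $x\asymp\sqrt{f_*(A)}$ yields the stated two-regime estimate, with all constants uniform in $A$ because $\sum_{j>A}f(j)^{-2}\le\Phi_2(\infty)$ and $f_*(A)\ge f_*(0)$.

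The main obstacle is this lower-tail estimate. Because $E\,e^{\theta(E_i-1)/f_A(i)} = \infty$ once $\theta\ge f_A(i)$, the Chernoff parameter is capped at $f_*(A)$, and it is exactly this ceiling that forces the crossover from Gaussian to the lighter (sub)exponential decay; one must track the growth of the cumulant sum as $\theta\uparrow f_*(A)$ and verify the resulting constants do not degrade with $A$. Everything else reduces to standard $L^2$-martingale theory and elementary inequalities.
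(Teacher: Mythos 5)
The paper itself does not prove Lemma \ref{mprop}: it is imported verbatim from Lemmas 7.1, 7.2 and 7.4 of \cite{BBpersistence}, so there is no in-paper argument to compare against, and your proposal should be judged as a self-contained derivation. On those terms it is correct. The reduction to the random walk $N_n=\sum_{i\le n}(1-E_i)/f_A(i)$ by evaluating $M_A$ at the arrival times is exact ($\sup_{s}M_A=\sup_n N_n$, and $\inf_s M_A$ differs from $\inf_n N_n$ by at most $1/f_*(A)\le 1/f_*(0)$); the compensator computation gives the martingale property, the bound $E\langle M_A\rangle_\infty\le\Phi_2(\infty)$ gives $L^2$-boundedness and hence a.s.\ and $L^2$ convergence, and the convolution argument (splitting off the independent term $(1-E_1)/f_A(1)$, which has a density) correctly yields absolute continuity of $M_A(\infty)$. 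The Ville/exponential-supermartingale bounds are also right, including the cap $\theta<f_*(A)$ on the Chernoff parameter for the lower tail. The only place you should be more explicit is the final matching step: your natural crossover sits at $x\asymp f_*(A)\Phi_2(\infty)$ (Gaussian bound $\exp(-cx^2/\Phi_2(\infty))$ below it, capped bound $\exp(-c f_*(A)x)$ above it), not at $x\asymp\sqrt{f_*(A)}$ as in the statement. The stated two-regime form does follow, but only after choosing $x_0'\le c\,\sqrt{f_*(0)}\,\Phi_2(\infty)$ so that the Gaussian branch covers the entire range $x< x_0'\sqrt{f_*(A)}$ uniformly in $A$, and observing that for $x\ge x_0'\sqrt{f_*(A)}$ either the Gaussian bound already implies $C_1\exp(-C_2\sqrt{f_*(A)}\,x)$ (since $x^2\ge x_0'\sqrt{f_*(A)}\,x$) or the capped bound does, via $f_*(A)\ge\sqrt{f_*(0)}\sqrt{f_*(A)}$; all constants then depend only on $\Phi_2(\infty)$ and $f_*(0)$, hence are independent of $A$, as required. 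With that bookkeeping written out, your argument is complete and is in the same martingale-concentration spirit as the proofs in the cited reference.
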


\section{A universal approach for budget lower bounds} \label{lowerunif}
Our general approach for obtaining lower bounds on $K_{\Psi}(\ep)$ is to connect $\liminf_{n\to\infty} P(v_0\notin H_{\lfloor \beta K/2\rfloor,\Psi}(\GG_n))$, for some $\beta \in (0,1)$ and any $K \ge 2$, to the probability of the event $\AA_{\beta, d^*,K}$ defined below. 
For $K\geq 1$, $d^* \ge m$, $\beta \in (0,1)$, define
\begin{align*}
\AA_{\beta, d^*, K} &=\{ v_0 \text{ has degree $\leq d^*$ in $\GG_{K}$ and there are at least $\lfloor \beta K\rfloor$}\\
& \qquad \qquad \text{ other vertices with degree $\ge d^*$ in $\GG_{K}$}\}.
\end{align*}
Our proofs for lower bounds on the budget will involve lower bounds on $P(\AA_{\beta,d^*,K})$ for appropriate choices of $\beta, d^*$. This, by Lemma \ref{lbpart1}, will produce lower bounds on $K_{\Psi}(\ep)$.

Following Definition \ref{budget}, for $1 \le j \le n$ we will denote by $H_{j,\Psi}(\GG_n)$ the set of $j$ vertices in $\GG_n$ with the largest $\Psi$ values (ties being broken arbitrarily). As $\Psi$ is taken to be the degree centrality measure here, $H_{j,\Psi}(\GG_n)$ is the set of $j$ vertices with the largest degrees in $\GG_n$ (ties being broken arbitrarily).

\begin{lemma}\label{lbpart1}
Suppose $m_i \equiv m \ge 1$ and $\Phi_2(\infty)<\infty$. For any $d^* \ge m$, $\beta \in (0,1]$ and $K\geq 4 \beta^{-1}$,
$$\liminf_{n\to\infty} P(v_0\notin H_{\lfloor \beta K/2\rfloor,\Psi}(\GG_n))\geq \frac{1}{2}P(\AA_{\beta,d^*,K}).$$
\end{lemma}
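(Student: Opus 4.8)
The plan is to show that, on the event $\AA_{\beta,d^*,K}$, with probability at least $1/2$ (conditionally), the root $v_0$ fails to be among the top $\lfloor \beta K/2 \rfloor$ degree vertices in $\GG_n$ for \emph{all} $n \ge K$; taking $n \to \infty$ and then unconditioning yields the claimed bound. The key point is that the at-least-$\lfloor \beta K\rfloor$ competitor vertices identified in $\GG_K$ are guaranteed to have degree $\ge d^*$ at time $K$, while $v_0$ has degree $\le d^*$ at time $K$; I need to argue that at least half of these competitors retain a degree strictly larger than that of $v_0$ forever after. This is exactly the kind of statement controlled by the martingales $M_A(\cdot)$ from Section \ref{mardeg}.

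First I would set up the continuous-time embedding: use Lemma \ref{ctbpem} (if $m=1$) or Lemma \ref{cbp} (if $m>1$) to couple $\{\GG_n : n \ge K\}$ with a continuous-time branching (or collapsed branching) process, run up to the stopping time at which $\GG_K$ is produced. Condition on $\GG_K$, and in particular on the event $\AA_{\beta,d^*,K}$. After this time, the future degree evolution of each vertex $v$ with current degree $d(v,K)$ is governed by an independent point process distributed as $\xi_{d(v,K)-1}$ (for $m>1$, via \eqref{cbppoint}; for $m=1$, directly), and the degree of $v$ at a later ``continuous time'' $t$ equals $d(v,K) + \xi_{d(v,K)-1}(t - \tau_K^{(v)})$ where $\tau_K^{(v)}$ is the relevant shift. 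The ordering of degrees between two vertices at any discrete time $\GG_n$, $n \ge K$, is the same as the ordering of these continuous-time degree processes at the corresponding embedding time, so it suffices to control the continuous-time picture for all $t \ge 0$.

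Next, for a competitor vertex $w$ with $d(w,K) \ge d^*$ and for the root with $d(v_0,K) =: d_0 \le d^*$, I would write each degree trajectory via the martingale representation: $v$'s degree reaches level $d(v,K) + j$ at the time $S^{(v)}$ with $\sum_{i=1}^{j} 1/f_{d(v,K)-1}(i) = (\text{that time}) + M^{(v)}(\text{that time})$, where $M^{(v)}$ is an independent copy of the martingale $M_{d(v,K)-1}(\cdot)$ from \eqref{madef}. Comparing $w$ and $v_0$: since $f$ is nondecreasing-ish in the relevant regime and $d(w,K) \ge d_0$, the ``clock'' $\Phi$-increments for $w$ are pointwise no larger than those for $v_0$, so $w$ stays ahead of $v_0$ for all time provided $\sup_s M^{(v_0)}(s) - \inf_s M^{(w)}(s)$ is not too large — concretely, provided $\sup_s M^{(v_0)}(s) < c$ and $\inf_s M^{(w)}(s) > -c'$ for suitable constants depending on $d^*, m$. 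By Lemma \ref{mprop}, each such event has probability bounded away from $0$ uniformly, and in fact $\ge 1 - \eta$ for a fixed small $\eta$ once $d^*$ is fixed; more carefully, I only need that \emph{each} competitor independently ``beats $v_0$ forever'' with probability at least some $p_0 > 1/2$ conditionally on the trajectory of $v_0$. Since there are $\ge \lfloor \beta K \rfloor$ independent competitors, a second-moment or Chernoff bound shows that, with conditional probability $\ge 1/2$, at least $\lfloor \beta K \rfloor /2 \ge \lfloor \beta K/2\rfloor$ of them beat $v_0$ for all $n \ge K$ — which forces $v_0 \notin H_{\lfloor \beta K/2\rfloor,\Psi}(\GG_n)$ for all such $n$, and hence in the $\liminf$. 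Unconditioning gives $\liminf_n P(v_0 \notin H_{\lfloor\beta K/2\rfloor,\Psi}(\GG_n)) \ge \tfrac12 P(\AA_{\beta,d^*,K})$.

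The main obstacle I anticipate is the uniformity and the ``forever'' nature of the comparison: I must ensure the constants controlling $\sup M^{(v_0)}$ and $\inf M^{(w)}$ can be chosen (depending only on $m$, not on $K$ or $n$) so that the per-competitor success probability exceeds $1/2$ — this is where the precise tail bounds of Lemma \ref{mprop}, together with the fact that $v_0$'s degree at time $K$ is at most $d^*$ (bounding the ``head start'' it could squander), are essential. A delicate point for $m>1$ is that the collapsing operation in Lemma \ref{cbp} makes the shifts $\tau_l$ random and coupled across vertices; I would handle this by noting that the degree \emph{processes} $\xi^{m,(v_j)}(\cdot + \tau_j)$ are genuinely i.i.d.\ (as asserted in Lemma \ref{cbp}) and that the event of interest — one degree process dominating another for all time — is invariant under a common monotone time-change, so the collapsing does not affect the comparison. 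Once this independence-plus-monotonicity structure is in place, the counting argument is routine.
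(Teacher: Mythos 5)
There is a genuine gap at the heart of your counting step. You claim that ``each competitor independently beats $v_0$ forever with probability at least some $p_0>1/2$ conditionally on the trajectory of $v_0$,'' and then apply a Chernoff bound. No such uniform conditional bound exists: conditionally on the root's trajectory, the per-competitor success probability is (essentially) $1-F\bigl(M^{(0)}(\infty)\bigr)$, a random variable that is close to $0$ whenever the root's martingale limit happens to be atypically large, so it is not bounded below by any $p_0>1/2$. Moreover, the event you target --- a competitor's degree dominating the root's \emph{for all} time --- is stronger than needed and does not have probability $\ge 1/2$ even unconditionally: after the reduction to processes started at the common level $d^*$, the two comparison martingales both start at $0$, and the probability that one stays above the other for all $t\ge 0$ is in general strictly less than $1/2$. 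The correct (and the paper's) route is exchangeability rather than a per-competitor bound plus concentration: couple the root's degree from above and each of the $|\mathcal{U}_K|\ge\lfloor\beta K\rfloor$ competitors' degrees from below by i.i.d.\ copies of $\zeta_{d^*}(\cdot)=d^*+\xi_{d^*-1}(\cdot)$; since $\Phi_2(\infty)<\infty$, the associated martingales converge a.s.\ to i.i.d.\ limits $M^{(i)}(\infty)$ with absolutely continuous law (Lemma \ref{mprop}), so the root's rank among the $|\mathcal{U}_K|+1$ limits is uniform, and the probability that it is not among the top $\lfloor\beta K/2\rfloor$ is exactly $\bigl(|\mathcal{U}_K|+1-\lfloor\beta K/2\rfloor\bigr)/\bigl(|\mathcal{U}_K|+1\bigr)\ge 1/2$ once $K\ge 4\beta^{-1}$; the liminf over $n$ is then handled by a.s.\ convergence of the martingales and Fatou. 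Note that this argument uses only the convergence and absolute continuity statements of Lemma \ref{mprop}, not its tail bounds, and the dependence of the comparison events through the shared root trajectory --- which breaks your Chernoff step --- is exactly what the rank-uniformity computation accommodates.

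A secondary issue: you justify the pointwise comparison of $\Phi$-increments by saying $f$ is ``nondecreasing-ish,'' but no monotonicity of $f$ is assumed in this lemma. The needed domination (root's future degree process dominated by a copy of $\zeta_{d^*}$, competitors' dominating copies of $\zeta_{d^*}$) follows from the pure-birth structure alone: a birth chain with rates $f(j)$ started at $d_0\le d^*$ is stochastically dominated, path by path, by the same chain started at $d^*$, since the lower-started chain is just a time-delayed copy once it reaches level $d^*$. Your remark on the collapsed embedding for $m>1$ (that the processes $\xi^{m,(v_j)}(\cdot+\tau_j)$ are i.i.d.\ and the comparison is unaffected by the common random time changes) is correct and matches the paper's use of Lemma \ref{cbp}.
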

\begin{proof}
We will use the construction and terminology of the collapsed branching process embedding defined in Section \ref{BPpre}. Let $\FF_K$ be the natural (stopped) filtration associated with the continuous time description of the collapsed branching process up to stopping time $\tau_K$. For $0 \le i \le K, t \ge 0$, write $d'_i(t) := \xi^{(m),v_i}(t + \tau_K - \tau_i)$ to denote the `continuous time' degree of $v_i$  at time $t+\tau_K$, conditioned on the filtration $\FF_K$. By definition we see that $d'_i(0)$ represents the degree of $v_i$ in $CBP_f(K)$ (which is known given $\FF_K$) for each $0\leq i\leq K$. 

Let $\mathcal{U}_K$ denote the set of indices $i \in \{1, \dots,K\}$ for which the degree of $v_i$ in $\GG_K$ is at least $d^*$. 
Conditioned on the filtration $\FF_K$, and on the event $\AA_{\beta, d^*, K}$, $|\mathcal{U}_K| \ge \lfloor \beta K\rfloor$, and the degree of $v_0$ is at most $d^*$. It follows from the construction of the collapsed branching process embedding that $\{d'_i(t)\}_{0\leq i\leq K}$ are independent. So, we can construct a collection of i.i.d. processes $\{\zeta^{(i)}_{d^*}(t): t\geq 0\}_{i \in \mathcal{U}_K \cup\{0\}}$, each having the same law as $\zeta_{d^*}(t) := \xi_{d^*-1}(t)+d^*, t \ge0$, such that for all $t \ge 0$, $d'_i(t) \ge \zeta^{(i)}_{d^*}(t)$ for all $i \in \mathcal{U}_K$, and $d'_0(t) \le \zeta^{(0)}_{d^*}(t)$.



By Lemma \ref{mprop}, for $i \in \mathcal{U}_K \cup \{0\}$, the process 
$$M^{(i)}(t):=\sum_{i=d^*}^{\zeta_{d^*}^{(i)}(t)-1}\frac{1}{f(i)}-t \ \overset{d}{=}\sum_{i=1}^{\xi_{d^*-1}(t)}\frac{1}{f_{d^*-1}(i)}-t, \ t \ge 0,$$
(defined conditional on $\FF_K$, on the event $\AA_{\beta, d^*, K}$) is a martingale which converges almost surely to a random limit $M^{(i)}(\infty)$ whose law is absolutely continuous with respect to the Lebesgue measure.

As $\{M_i(\infty) : i \in \mathcal{U}_K \cup \{0\}\}$ are i.i.d. random variables with absolute continuous laws with respect to the Lebesgue measure, for any $i \in \mathcal{U}_K\cup \{0\}$ and $1 \le j \le |\mathcal{U}_K| + 1$, the probability of $M^{(i)}(\infty)$ being the $j$-th largest in $\{M^{(l)}(\infty)\}_{l\in \mathcal{U}_K \cup \{0\}}$ is exactly $1/(|\mathcal{U}_K|+1)$. Therefore, on the event $\AA_{\beta, d^*, K}$,
\begin{align*}
&\liminf_{n\to\infty}P(v_0\notin H_{\lfloor \beta K/2\rfloor,\Psi}(\GG_n)|\FF_K)\\
\ge & \liminf_{n\to\infty} P(d'_0(\tau_n-\tau_K) \text{ is not among the $\lfloor \beta K/2\rfloor$ largest values in $\{d'_i(\tau_n-\tau_K)\}_{i \in \mathcal{U}_K \cup \{0\}}$})\\
\geq & \liminf_{n\to\infty} P(\zeta_{d^*}^{(0)}(\tau_n-\tau_K) \text{ is not among the $\lfloor \beta K/2\rfloor$ largest values in $\{\zeta_{d^*}^{(i)}(\tau_n-\tau_K)\}_{i \in \mathcal{U}_K \cup \{0\}}$})\\
=& \liminf_{n\to\infty} P(M^{(0)}(\tau_n-\tau_K) \text{ is not among the $\lfloor \beta K/2 \rfloor$ largest values in $\{M^{(i)}(\tau_n-\tau_K)\}_{i \in \mathcal{U}_K \cup \{0\}}$})\\
\geq & P(M^{(0)}(\infty) \text{ is not among the $\lfloor \beta K/2\rfloor$ largest values in $\{M^{(i)}(\infty)\}_{i \in \mathcal{U}_K \cup \{0\}}$})\\
=&\sum_{j=\lfloor \beta K/2\rfloor+1}^{|\mathcal{U}_K|+1} \frac{1}{|\mathcal{U}_K|+1}\geq  \frac{1}{2}.
\end{align*}
Taking expectations,
\begin{align*}
&\liminf_{n\to\infty} P(v_0\notin H_{\lfloor \beta K/2 \rfloor,\Psi}(\GG_n))\\
=&\liminf_{n\to\infty} E\left[P(v_0\notin H_{\lfloor \beta K/2 \rfloor,\Psi}(\GG_n)|\FF_K)1_{\AA_{\beta, d^*, K}}\right]\geq \frac{1}{2}P(\AA_{\beta, d^*, K}).
\end{align*}
\end{proof}

\section{Linear attachment: Proof of Theorem \ref{linearattach}}\label{linsec}



\subsection{Lower bound}

We will use Lemma \ref{lbpart1} with $\beta=1$ and $d^* = m$. In this case, $\AA_{\beta,d^*,K}$ simplifies to
$$
\AA_K = \{v_0 \text{ has degree $m$ in } \GG_K\}.
$$
A lower bound for $P(\AA_K)$ is obtained in the following lemma.

\begin{lemma}\label{lbpart2}
There exists $C>0$ depending on $f$ such that for all $K\geq 1$, $P(\AA_K)\geq C/K^{\frac{m( m+\beta)}{2 m+\beta}}$.
\end{lemma}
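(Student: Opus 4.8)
The plan is to track the degree of the root $v_0$ through the collapsed branching process embedding (Section \ref{BPpre}) and estimate directly the probability that $v_0$ never receives an extra edge beyond its initial $m$ up to time $\tau_K$. In the linear case $f(k) = k+\beta$, the degree process of $v_0$, started from $m$, is a simple birth process with immigration: conditioned on the arrival times $\tau_1 < \tau_2 < \cdots < \tau_K$ of the vertices $v_1, \dots, v_K$, at step $j$ the $j$-th incoming vertex attaches each of its $m$ edges to $v_0$ with probability proportional to $f(\text{current degree of }v_0)$ against the total weight $\sum_{i} f(d_i) $, which in the linear case equals $s_{j} + (j+1)\beta$ (total degree plus $\beta$ times number of vertices), a deterministic quantity. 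Hence on the event $\AA_K$ the conditioning drops out and
$$
P(\AA_K) = \prod_{j=1}^{K-1} \prod_{\ell=1}^{m} \left(1 - \frac{m+\beta}{s_j + \ell - 1 + (j+1)\beta}\right),
$$
or an expression of this flavour (the exact bookkeeping of how the weight normalizer increments as each of the $m$ edges of vertex $v_{j+1}$ is attached needs care, but the normalizer is always deterministic and of order $(2m+\beta) j$).

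The next step is to estimate this product from below. Taking logarithms, $\log P(\AA_K) = \sum_{j=1}^{K-1}\sum_{\ell=1}^m \log(1 - p_{j,\ell})$ where $p_{j,\ell} = \frac{m+\beta}{(2m+\beta)j + O(1)}$. Since the $p_{j,\ell}$ are summable-in-$\ell$ and decay like $1/j$, one has $\log(1-p_{j,\ell}) \ge -p_{j,\ell} - C p_{j,\ell}^2$ for $j$ large, and $\sum_j \sum_\ell p_{j,\ell}^2 < \infty$, so the quadratic corrections contribute only a bounded multiplicative constant. The linear term gives
$$
\sum_{j=1}^{K-1}\sum_{\ell=1}^m p_{j,\ell} = \sum_{j=1}^{K-1} \frac{m(m+\beta)}{(2m+\beta)j} + O(1) = \frac{m(m+\beta)}{2m+\beta}\log K + O(1),
$$
whence $P(\AA_K) \ge C K^{-m(m+\beta)/(2m+\beta)}$, which is exactly the claimed bound.

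I expect the main obstacle to be the first step: making the reduction to a deterministic product rigorous, i.e. verifying that the normalizing constant in the attachment probability for $v_0$ is indeed deterministic (independent of the graph history) on the event that $v_0$ stays at degree $m$. This is a feature special to linear $f$ — the sum $\sum_i f(d_i) = \sum_i (d_i + \beta)$ equals (twice the number of attached edges so far) plus $\beta$ times (number of vertices so far), both of which are deterministic functions of the step index. One must also handle carefully the intermediate updates as the $m$ edges of a single new vertex are attached sequentially, since each attachment (to some vertex other than $v_0$, on the event $\AA_K$) bumps the total degree by $1$ and hence changes the denominator for the next edge; but this only shifts denominators by $O(1)$ and does not affect the leading-order computation. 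Once the product formula is in hand, the logarithmic estimate is routine. An alternative, perhaps cleaner, route to the same bound is to use the birth-process-with-immigration moment generating function \eqref{yulemgf} with $\nu$, $\beta$ chosen to match the embedded continuous-time dynamics of $v_0$'s degree, and extract $P(\text{degree stays at }m \text{ until } T_K)$ from the event $\{Y_{\nu,\beta}(T_K) = m\}$ together with the tail asymptotics \eqref{tnlim} of $T_K$; this avoids the discrete bookkeeping entirely but requires controlling the correlation between $T_K$ and the degree process.
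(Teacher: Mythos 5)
Your proposal is correct and follows essentially the same route as the paper: on the event $\AA_K$ the attachment probability of each new edge to $v_0$ has a deterministic normalizer in the linear case (total degree plus $\beta$ times the number of old vertices, of order $(2m+\beta)j$), so $P(\AA_K)$ is bounded below by a product of factors $1-\frac{m+\beta}{(2m+\beta)j+O(1)}$, and the $\log(1-x)\ge -x-x^2$ estimate yields the exponent $\frac{m(m+\beta)}{2m+\beta}$ exactly as in the paper (which starts the product at a fixed $i_0$ to keep each factor $\ge 1/2$, just as you absorb the small-$j$ terms into a constant). Only note the minor bookkeeping slip: the total degree of the old vertices when the $\ell$-th edge of $v_{j+1}$ is attached is $2s_j+\ell-1$, not $s_j+\ell-1$, but since you use the correct leading order $(2m+\beta)j$ in the decisive sum this does not affect the result.
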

\begin{proof}
Let $i_0\geq 1$ be such that $\frac{m+ \beta}{(2 m+\beta)i_0+\beta}\leq 1/2$ and write $C_0=P(\AA_{i_0})$. Then, for $K > i_0$, $\AA_K = \AA_{i_0} \cup \{\text{no edge from $v_{i_0+1},\dots,v_K$ attaches to $v_0$}\}$. Hence,
\begin{align*}
P(\AA_K)&\geq P(\AA_{i_0})\prod_{i=i_0}^{K-1} \prod_{im <k\leq (i+1)m} \left( 1-\frac{f(m)}{\sum_{j=0}^{i} f(d_j(k-1))}\right)\\
&\geq P(\AA_{i_0})\prod_{i=i_0}^{K-1} \left( 1-\frac{f(m)}{\sum_{j=0}^{i} f(d_j(s_i))} \right)^m\\
&= C_0\prod_{i=i_0}^{K-1} \left( 1-\frac{ m+\beta}{(2 m+\beta)i+\beta} \right)^m\\
&=C_0\exp\left(m \sum_{i=i_0}^{K-1} \log \left( 1-\frac{ m+\beta}{(2 m+\beta)i+\beta} \right)\right).
\end{align*}
Using the observation that $\log(1-x) \ge -x - x^2$ for $x \in (0,1/2],$ we have 
$$P(\AA_K)\geq C\exp \left(-\frac{m( m+\beta)}{2m+\beta} \sum_{i=i_0}^{K-1}\frac{1}{i+\beta/(2 m+\beta)}\right) \geq C'\exp\left(-\frac{m( m+\beta) }{2 m+\beta}\log K\right),$$
for positive constants $C, C'$ not depending on $K$.
\end{proof}

\begin{proof}[Proof of lower bound in Theorem \ref{linearattach}]
Lemma \ref{lbpart1} and Lemma \ref{lbpart2} together yield
$$P(v_0\notin H_{\lfloor K/2\rfloor,\Psi}(\GG_n))\geq C/K^{\frac{m( m+\beta)}{2 m+\beta}}$$
for some $C>0$, which then leads to the budget lower bound 
$$K_{\Psi}(\ep)\geq  \left(\frac{C}{\ep}\right)^{\frac{2m+\beta}{m(m+\beta)}}.$$
\end{proof}

\subsection{Upper bound}\label{linearub}

To obtain the budget upper bound, we will show that, with high probability, the degree of the root grows at a certain rate (see event $A_n$ below). On this event, the degree of the root has enough of a head start so that the degree of the $(n+1)$-th added vertex (for large $n$) never crosses that of the root at any future time with high probability. Precise quantitative bounds on the associated probabilities is key to obtaining the budget upper bound.

Throughout this section $m_i \equiv m \ge 1$. All constants that appear below can depend on $m, \beta$, but not $n$.

Let $\{\alpha_n\}$ be an increasing sequence that goes to infinity as $n$ goes to infinity, which will be specified later. Recall that $s_n=\sum_{i=1}^n m_i=mn$. For $i \ge 1$ and $l > s_{i-1}$, recall that $d_i(l)$ denotes the degree of $v_i$ in $\GG^*_l$ (the random graph, index by attached edges, at time $l$). Define the following events for $n\geq 1$,
\beq\label{A_n}
A_n=\{ \Phi_1(d_0(s_n)) \geq \alpha_n\}
\eeq
and 
\beq \label{E_n}
E_n=\{ d_n(k)\geq d_0(k) \text{ for some }k\geq s_n\}.
\eeq

Our goal is to find some $n_0(\ep)\in\mathbb{N}$ so that for all $n\geq n_0(\ep)$, $P(E_n)\leq \ep$. This leads to an upper bound on the budget 
$K_{\Psi}(\ep)\leq n_0(\ep)$. This will be done in two steps. Step 1 estimates $P(E_n\cap A_n)$ and Step 2 estimates $P(A_n^c)$.

\mn
\textbf{Step 1:} The first step is to obtain an upper bound on $P(E_n\cap A_n)$ following the same approach as in the proof of Theorem 4.2 in  \cite{BBpersistence}.
To be self-contained we will present the calculation. The expressions are intentionally left in terms of general $f$ till \eqref{bn} as they are also used in the proof of Theorem \ref{general}.
 
For $A\in \mathbb{N}_0$, recall that $\xi_A(\cdot)$ denotes the point process obtained with attachment function $f_A(\cdot)=f(A+\cdot)$. Define $\zeta^{(n)}_1(t)=d_0(n)+\xi_{d_0(s_n)-1}(t)$ and $\zeta^{(n)}_2(t)=m+\xi_{m-1}(t)$, and the corresponding martingales 
\begin{align*}
M^{(n)}_1(t)&=\sum_{k=d_0(s_n)}^{\zeta^{(n)}_1(t)-1}\frac{1}{f(k)}-t=\sum_{k=1}^{\xi_{d_0(s_n)-1}(t)}\frac{1}{f_{d_0(s_n)-1}(k)}-t,\\
M^{(n)}_2(t)&=\sum_{k=m}^{\zeta^{(n)}_2(t)-1} \frac{1}{f(k)}-t=\sum_{k=1}^{\xi_{m-1}(t)}\frac{1}{f_{m-1}(k)}-t.
\end{align*}
Recall $\Phi_1(\cdot)$ from \eqref{phd}. Then 
\begin{align*}
&P(E_n\cap A_n)\\
\leq &P(M^{(n)}_2(t)+\Phi_1(m)\geq M^{(n)}_1(t)+\Phi_1(d_0(s_n)) \text{ for some }t\geq 0, \hh \Phi_1(d_0(s_n))\geq \alpha_n)\\
\leq &P\left( \inf_{s<\infty} M_{d_0(s_n)}(s)\leq -\frac{\Phi_1(d_0(s_n))-\Phi_1(m)}{2},  \Phi_1(d_0(s_n))\geq \alpha_n\right)\\
&+P\left( \sup_{s<\infty} M_m(s)\geq\frac{\Phi_1(d_0(s_n))-\Phi_1(m)}{2}, \Phi_1(d_0(s_n))\geq \alpha_n\right).
\end{align*}
Choosing $n$ sufficiently large such that $\frac{\alpha_n-\Phi_1(m)}{2}\geq \frac{\alpha_n}{4}$, we have 
\begin{align*}
P(E_n\cap A_n)&\leq P\left(\inf_{s<\infty} M_{d_0(s_n)}(s)\leq -\frac{\alpha_n}{4},  \Phi_1(d_0(s_n))\geq \alpha_n\right)\\
&+P\left( \sup_{s<\infty} M_m(s)\geq \frac{\alpha_n}{4}, \Phi_1(d_0(s_n))\geq \alpha_n\right).
\end{align*}
Assume $n_0$ is large enough so that $\frac{\alpha_n-\Phi_1(m)}{2}\geq \frac{\alpha_n}{4}$ for all $n \ge n_0$, and $\frac{\alpha_{n_0}}{4}\geq \max\{x_0,x''_0\}$, where $x_0,x''_0$ are the constants appearing in Lemma \ref{mprop}. It follows from the lemma that there exist constants $C,C'>0$ such that for all $n\geq n_0$,
\begin{align}\label{an}
\nonumber &P\left(\inf_{s<\infty} M_{d_0(s_n)}(s)\leq -\frac{\alpha_n}{4},  \Phi_1(d_0(s_n))\geq \alpha_n\right)\\
\nonumber \leq & E \left[ \left(C\exp(-C'(\alpha_n^2))+C\exp\left( -C'\sqrt{f_*(d_0(s_n))}\alpha_n\right)\right)1_{\{\Phi_1(d_0(s_n))\geq \alpha_n\}}  \right]\\
\leq &C\exp(-C'\alpha_n^2) +C\exp\left( -C'\sqrt{f_*(\lfloor \Phi_1^{-1}(\alpha_n)\rfloor)}\alpha_n\right) =: a_n,
\end{align}
and 
\begin{align}\label{bn}
P\left( \sup_{s<\infty} M_m(s)\geq \frac{\alpha_n}{4}, \Phi_1(d_0(s_n))\geq \alpha_n\right)\leq C\exp(-C'\alpha_n^2) =: b_n.
\end{align}
Since $f(i)=i+\beta$, $f_*(\lfloor \Phi_1^{-1}(\alpha_n)\rfloor)= \lfloor \Phi_1^{-1}(\alpha_n)\rfloor + 1 + \beta \ge C\exp(\alpha_n)$. Hence, setting $\alpha_n=M(\log n)^{1/2}, \, n \ge 1,$ for a fixed large $M$, we obtain a constant $C_1>0$ such that 
$$P(E_n\cap A_n)\leq a_n+b_n\leq C_1 n^{-m}, \ n \ge 1.$$

\mn
\textbf{Step 2:} Next we will estimate $P(A_n^c)$. 


Recall that $\{\GG^*_{k}: k\geq s_1\}$ denotes the edge indexed random graph sequence. $\{\GG_n: n\geq 1\}$ has the same law as $\{\GG_{s_n}^* :n\geq 1\}$. Recall $f(i) = i+\beta$ for $i \ge 1$.

We will construct another sequence of graphs $\{{\bar\GG}_k: k\geq s_1\}$ by attaching one vertex with one edge at each step using attachment function $\bar f(i)=i+\beta/m$.  The construction starts with a fixed graph $\bar\GG_{s_1}$ with vertices $\{\bar{v}_1,\dots, \bar{v}_{2m}\}$ and an edge between $\bar v_i$ and $\bar v_{i+m}$ for each $1\leq i \leq m$. 

Observe that $\bar\GG_{k}$ has vertices $\{ \bar v_i: 1\leq i\leq k+ m\}$. For $k \ge s_1$, $1\leq i\leq k+ m$, let $\bar d_i(k)$ denote the degree of vertex $\bar v_i$ in $\bar \GG_k$. Define $d''_0(k)=\sum_{i=1}^{m}\bar d_i(k)$.  Note that $d''_0(s_1)=d_0(s_1)=m$. For any  $k\geq s_1$,
$$P(d''_0(k+1)=d''_0(k)+1| \bar{d}_i(j), 1\leq i\leq m, j\leq k)=\frac{ \sum_{i=1}^{m} \bar f(\bar d_i(k))}{\sum_{j=1}^{k+m} \bar f(\bar d_j(k))}.$$

\begin{lemma}\label{coupling0}
There exists a coupling of the random graph sequences $\{\GG^*_{k}: k \geq s_1\}$ and $\{{\bar\GG}_k: k\geq s_1\}$ such that $d''_0(k)\leq d_0(k)$ for all $k\geq s_1$.
\end{lemma}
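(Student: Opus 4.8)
The plan is to build the coupling one edge at a time, as a \emph{monotone} coupling of the two attachment dynamics, and to maintain the relation $d''_0(k)\le d_0(k)$ as a loop invariant. For the base case $k=s_1=m$: in $\GG^*_m=\GG_1$ the root has degree $d_0(m)=m$, while in $\bar\GG_{s_1}$ each of $\bar v_1,\dots,\bar v_m$ has degree $1$, so $d''_0(m)=m$; hence $d''_0(m)=d_0(m)$. For the inductive step, suppose $d''_0(k)\le d_0(k)$. Each process adds exactly one edge in passing from index $k$ to $k+1$, so $d_0$ and $d''_0$ each increase by at most $1$. If $d''_0(k)<d_0(k)$, then $d''_0(k+1)\le d''_0(k)+1\le d_0(k)\le d_0(k+1)$ irrespective of the coupling, so we may couple the two transitions arbitrarily (e.g.\ independently). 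The only case requiring care is $d''_0(k)=d_0(k)=:d$, where we must couple so that $\{d''_0\text{ increases}\}\subseteq\{d_0\text{ increases}\}$; this is possible precisely when $P(d''_0\text{ increases}\mid\cdot)\le P(d_0\text{ increases}\mid\cdot)$, which is the key inequality to establish.

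To verify it, write $s_{n-1}<k+1\le s_n$, so $s_{n-1}=m(n-1)\le k$. In $\GG^*$ the new edge attaches to $v_0$ with probability
$$
p=\frac{f(d_0(k))}{\sum_{j=0}^{n-1}f(d_j(k))}=\frac{d+\beta}{(k+s_{n-1})+n\beta},
$$
using that the degree sum of $v_0,\dots,v_{n-1}$ in $\GG^*_k$ is $2k-\deg(v_n)=2k-(k-s_{n-1})=k+s_{n-1}$. In $\bar\GG$, $d''_0$ increases iff the new edge attaches to one of $\bar v_1,\dots,\bar v_m$, which occurs with probability
$$
q=\frac{\sum_{i=1}^m\bar f(\bar d_i(k))}{\sum_{j=1}^{k+m}\bar f(\bar d_j(k))}=\frac{d''_0(k)+\beta}{2k+(k+m)\beta/m}=\frac{d+\beta}{2k+(k+m)\beta/m},
$$
using $\sum_{i=1}^m\bar f(\bar d_i(k))=d''_0(k)+\beta$ and that $\bar\GG_k$ has total degree $2k$ spread over $k+m$ vertices. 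Then $q\le p$ is equivalent to $(k+s_{n-1})+n\beta\le 2k+(k+m)\beta/m$, which rearranges to $(n-1)(m+\beta)\le k(m+\beta)/m$, i.e.\ $m(n-1)=s_{n-1}\le k$ — exactly the standing hypothesis (and it holds for all $\beta\ge0$).

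Granting $q\le p$, I would realize the transition from a single $U\sim\mathrm{Unif}[0,1]$: in $\GG^*$, attach the new edge to $v_0$ if $U\le p$ and otherwise distribute it among $v_1,\dots,v_{n-1}$ by the appropriate conditional law; in $\bar\GG$, attach it to $\{\bar v_1,\dots,\bar v_m\}$ (by the appropriate conditional law) if $U\le q$ and to $\{\bar v_{m+1},\dots\}$ otherwise. Since $\{d''_0\text{ increases}\}=\{U\le q\}\subseteq\{U\le p\}=\{d_0\text{ increases}\}$, one checks in all four cases that $d''_0(k+1)\le d_0(k+1)$, closing the induction; iterating over $k\ge s_1$ produces the claimed coupling of the whole sequences. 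I expect the only real obstacle to be the bookkeeping of the normalizing constants — in particular handling the edge-indexing of $\GG^*$ correctly so that $v_n$ is excluded from the denominator and contributes degree $k-s_{n-1}$ — after which the comparison $q\le p$ collapses to the elementary inequality $s_{n-1}\le k$; there is no deeper difficulty.
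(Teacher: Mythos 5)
Your proposal is correct and follows essentially the same route as the paper: an edge-by-edge inductive monotone coupling in which the one-step conditional probability that the root gains the new edge in $\GG^*$ is shown to dominate the probability that one of $\bar v_1,\dots,\bar v_m$ gains it in $\bar\GG$, with the comparison reducing to $s_{n-1}\le k$. Your bookkeeping of the incoming vertex's partial degree (degree sum $k+s_{n-1}$ rather than $2k$) is in fact slightly more careful than the identity displayed in the paper's proof, but the inequality and the resulting coupling construction are the same.
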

\begin{proof}
We proceed by induction. Recall that $d''_0(s_1)=d_0(s_1)=m$. Suppose for some $n\geq 1$ and some $s_{n}\leq k <s_{n+1}$, we have coupled the random graph sequences $\{\GG^*_{l}:  s_1 \le l \le k\}$ and $\{{\bar\GG}_k: s_1 \leq l \le k\}$ such that $d''_0(i)\leq d_0(i)$ for $i\leq k$. We want to extend the coupling to $s_1 \le l \le k+1$ such that $d_0''(k+1) \le d_0(k+1)$. For $ s_{n}\leq k<s_{n+1}$, in the construction of the original $\{\GG^*_j\}$,
$$P(\text{other end of $e_{k+1}$ attached to $v_0$} \, | \, \GG_j^*, j\leq k) =\frac{f(d_0(k))}{\sum_{j=0}^{n}f(d_j(k))}.$$
Note that, for $s_{n}\leq k< s_{n+1}$,
$$
\sum_{j=1}^{k+m} \bar f(\bar d_j(k)) = 2k + \frac{(k+m)\beta}{m} \geq 2k + (n+1)\beta = \sum_{j=0}^{n} f(d_j(k)).
$$
From this relation and the induction assumption, for $s_n\leq k<s_{n+1}$,
\begin{align*}
\frac{f(d_0(k)}{\sum_{j=0}^n f(d_j(k))} \ge \frac{f(d''_0(k))}{\sum_{j=1}^{k+m} \bar f(\bar d_j(k))}=\frac{\sum_{i=1}^{m} \bar f(\bar d_i(k))}{\sum_{j=1}^{k+m} \bar f(\bar d_j(k))},
\end{align*}
that is,
\begin{multline*}
P(\text{other end of $e_{k+1}$ attached to $v_0$} \, | \, \GG_j^*, j\leq k)\\
\geq P(\text{other end of $e_{k+1}$ attached to one of $\bar{v}_1,\dots, \bar{v}_m$} \, | \, \bar{\GG}_j, j\leq k).
\end{multline*}
This can be used to extend the coupling to $\{\GG^*_{l}:  s_1 \le l \le k+1\}$ and $\{{\bar\GG}_l: s_1 \leq l \le k+1\}$ such that $d''_0(i)\leq d_0(i)$ for $i\leq k+1$. Using the Kolmogorov extension theorem, this completes the proof.
\end{proof}

From Lemma \ref{coupling0} it follows that, for $n\geq 1$,
$$
P(A_n^c)= P(\Phi_1(d_0(s_n)) < \alpha_n)\leq P( \Phi_1(d''_0(s_n))<\alpha_n)\leq P(\cap_{i=1}^{m} \{ \Phi_1(\bar d_i(s_n))<\alpha_n\}),
$$
where recall $\Phi_1(l)=\sum_{i=1}^{l-1}\frac{1}{f(i)}$.
Note that, for any $1 \le i \le m$, $k \ge s_1$, the `offspring' subtree of $\bar v_i$ (subtree corresponding to the connected component of $\bar{\GG}_k$ containing $\bar v_{i}$, rooted at $\bar v_i$) can be described by an embedding into the continuous time branching process (analogous to Lemma \ref{ctbpem})
$$\widetilde{BP}^{(i)}_{\bar f}(t):= BP^{(i)}_{\bar f}(t)\cup BP^{(i+m)}_{\bar f}(t),$$
where $\{BP^{(i)}_{\bar f}(t): 1\leq i\leq 2m\}$ are independent branching processes as in Definition \ref{CTBP} constructed using the attachment function $\bar f$. Hence, for any $n \ge 1$, the degrees $\{\bar d_i(s_n): 1\leq i\leq m\}$ are independent and have the same distribution, which leads to
 \beq\label{error0}
 P(A_n^c)\leq  P(\Phi_1(\bar d_1(s_n)<\alpha_n)^m.
 \eeq

Similarly as in \eqref{pprocess}, we define $\bar \xi$ to be the point process under the attachment function $\bar f$, which is a Yule process with birth rate 1 and immigration rate $\beta/m$. For $k > s_1$, let $\bar T_k := \inf\{t \ge 0: \sum_{i=1}^{2m} |BP^{(i)}_{\bar f}(t)| = k\}$, and set $\bar T_{s_1}=0$. Let $\bar \sigma_n= \frac{m\log n}{2 m+\beta}-\gamma_n$, where $\{\gamma_n\}_{n\geq 1}$ is an increasing sequence diverging to infinity, such that $\gamma_n/\log n \rightarrow 0$ as $n \rightarrow \infty$, that will be chosen later. There exists $c >0$ such that, for all large $l$,
$\Phi_1(l)=\sum_{i=1}^{l-1}\frac{1}{f(i)}=\sum_{i=1}^{l-1}\frac{1}{i+\beta}\geq \log l - \log c,$
which implies $\Phi^{-1}_1(l)\leq ce^{l}$ for large $l$.
Then, for sufficiently large $n$, 
\begin{align*}
P(\Phi_1(\bar d_1(s_n))<\alpha_n)&= P(\Phi_1( \bar \xi(\bar T_{s_n}))<\alpha_n)\\
&\leq P\left(\bar T_{s_n}< \bar \sigma_n \right)+P\left(\bar \xi(\bar \sigma_n)\leq \Phi_1^{-1}(\alpha_n)\right)\\
&\leq P\left(\bar T_{s_n}< \bar \sigma_n \right)+P\left(\bar \xi(\bar \sigma_n)\leq c e^{\alpha_n}\right).
\end{align*}

It follows from \eqref{yulemgf} that for any $\theta > 0, n \in \mathbb{N}$,
\begin{align*}
P\left(\bar \xi(\bar \sigma_n)\leq c e^{\alpha_n}\right)
\nonumber &=P\left(\exp(-\theta \bar \xi(\bar \sigma_n))\geq \exp \left\{-\theta c e^{\alpha_n}\right\}\right)\leq \exp \left\{\theta c e^{\alpha_n}\right\}M(-\theta, \bar \sigma_n)\\
\nonumber&=\exp \left\{\theta c e^{\alpha_n}\right\}\cdot \frac{e^{-\theta}}{((1-e^{-\theta})e^{\bar \sigma_n}+e^{-\theta})^{1+\beta/m}}.
\end{align*}
Hence, choosing $\theta = e^{-\alpha_n}$, we obtain for any $n \in \mathbb{N}$,
\begin{align}\label{error1}
P\left(\bar \xi(\bar \sigma_n)\leq c e^{\alpha_n}\right) \le \frac{e^{c}}{((1-e^{-e^{-\alpha_n}})e^{\bar \sigma_n}+e^{-e^{-\alpha_n}})^{1+\beta/m}}\nonumber\\
 \le Ce^{-(\bar \sigma_n - \alpha_n)(1 + \beta/m)} = C n^{-\frac{m+\beta}{2m + \beta}}e^{(\alpha_n + \gamma_n)(1 + \beta/m)}.
\end{align}

It remains to estimate $P(\bar T_{s_n}< \bar \sigma_n)$. Let $\{E_k: k\geq 2m\}$ be a set of i.i.d. exponential random variables with intensity 1. Note that, for any $l \ge m+1$, 
\begin{align*}
\bar T_l& \overset{d}{=} \sum_{k=m}^{l-1}\frac{E_k}{ \sum_{j=1}^{k+m}\bar f(\bar d_j(k))}=\sum_{k=m}^{l-1}\frac{E_k}{ \sum_{j=1}^{k+m}(\bar d_j(k)+\beta/m)}\\
&=\sum_{k=m}^{l-1}\frac{E_k}{2k+(k+m)\beta/m}=\sum_{k=m}^{l-1}\frac{E_k}{(2+\beta/m)k+\beta}.
\end{align*}

Take $\theta_n=\log n$. Then, for sufficiently large $n$, 
\begin{align*}
P(\bar T_{s_n}<\bar \sigma_n)&=P(e^{-\theta_n \bar T_{s_n}}\geq e^{-\theta_n \bar \sigma_n})\leq e^{\theta_n \bar \sigma_n} Ee^{-\theta_n \bar T_{s_n}}\\
&= e^{\theta_n \bar \sigma_n} \exp\left( -\sum_{k=m}^{s_n-1} \log \left(1+\frac{\theta_n}{(2+\beta/m)k+\beta}\right) \right)\\
&\leq Ce^{\theta_n \bar \sigma_n} \exp\left( -\sum_{k=\lfloor \theta_n \rfloor + 1}^{s_n-1} \frac{\theta_n}{(2+\beta/m)k+\beta}\right)\quad \text{(by Taylor expansion)}\\
&\leq Ce^{\theta_n \bar \sigma_n}\exp\left ( -\frac{\theta_n m}{2m+\beta} (\log n - \log \log n - C') \right)\\
&=C e^{-\theta_n (\gamma_n- \frac{m}{2m + \beta}\log \log n - \frac{ C' m }{2m+\beta})}.
\end{align*}
Take $\gamma_n=\frac{2m \log\log n}{2m+\beta}$. Then, for any $M'>0$,
\beq \label{error2}
P(\bar T_{s_n}< \bar \sigma_n)\leq n^{-M'}
\eeq
when $n$ is sufficiently large. Collecting the terms \eqref{error0}, \eqref{error1} and \eqref{error2} we have for any $M'>0$, $\theta \ge 1$,
\begin{align*}
P(A_n^c)&\leq  \left(P\left(\bar T_{s_n}<\bar \sigma_n \right)+P\left(\bar \xi(\bar \sigma_n)\leq ce^{\alpha_n}\right)\right)^m\\
&\leq \left(Cn^{-\frac{ m+\beta}{2 m+\beta}}\exp\left((1+ \beta/m)(\alpha_n+\gamma_n)\right) +n^{-M'}\right)^m\\
&\leq C' n^{-\frac{m(m+\beta)}{2m+\beta}} e^{(\alpha_n + \gamma_n)(m + \beta)} + C' n^{-M'm}.
\end{align*}
Take $M' = 1$. Recall from Step 1 that $\alpha_n = M(\log n)^{1/2}$ for some fixed $M$, and recall $\gamma_n=\frac{2m \log\log n}{2m+\beta}$. Then,
$$P(A_n^c)\leq C'' n^{-\frac{m(m+\beta)}{2m+\beta}} \exp\left(C(\log n)^{1/2}\right), \ n \ge 1.$$
This completes Step 2. 

\begin{proof}[Proof of upper bound in Theorem \ref{linearattach}]
Combining the error probabilities obtained in Step 1 and Step 2, we have 
\begin{align*}
P(E_n)&\leq P(E_n\cap A_n)+P(A^c_n)\leq c_1 n^{-m} + C''n^{-\frac{m(m+\beta)}{2m+\beta}} \exp\left(C(\log n)^{1/2}\right)\\
&\leq (c_1 + C'')n^{-\frac{m(m+\beta)}{2m+\beta}} \exp\left(C(\log n)^{1/2}\right).
\end{align*}
This leads to the upper bound on $K_\Psi(\cdot)$:
$$K_\Psi(\ep)\leq \frac{C_1}{\ep^{\frac{2m+\beta}{m(m+\beta)}}}\exp\left(\sqrt{C_2\log \frac{1}{\ep}}\right), \ \ep \in (0,1),$$
for positive constants $C_1,C_2$ depending on $m,\beta$ but not $\ep$.
\end{proof}

\section{General attachment: Proof of Theorem \ref{general}}\label{gensec}
In this section, we prove Theorem \ref{general} in the course of the following lemmas.
\subsection{When $m=1$}
\begin{lemma}\label{lbm1}
Under the assumptions in Theorem \ref{general} with $m=1$, there exists some $C_1>0$ so that
$$ K_\Psi(\ep)\geq C_1 \ep^{-\frac{\lambda^*}{f_*}}, \quad \text{ for all } \ep \in (0,1).$$
\end{lemma}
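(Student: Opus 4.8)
The plan is to invoke the universal lower bound of Lemma~\ref{lbpart1}: for a constant $\beta\in(0,1)$ and a constant threshold $d^*$ to be chosen, that lemma reduces the problem to proving $P(\AA_{\beta,d^*,K})\ge c\,K^{-f_*/\lambda^*}$ for all large $K$ and some $c>0$. Since $f(i)\to\infty$ by Assumption~(A1), the minimum $f_*=\min_{i\ge1}f(i)$ is attained; I fix $i_0$ with $f(i_0)=f_*$ and set $d^*:=i_0\ (\ge m=1)$.

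The heart of the argument is a lower bound of order $K^{-f_*/\lambda^*}$ on $P(\deg_{\GG_K}(v_0)\le d^*)$. I would use the embedding of Lemma~\ref{ctbpem}, $\GG_K\overset{d}{=}\widetilde{BP}_f(T_K)=BP^{(0)}_f(T_K)\cup BP^{(1)}_f(T_K)$, under which $\deg_{\GG_K}(v_0)=1+\xi^{(0)}(T_K)$ for $\xi^{(0)}$ the offspring process of $v_0$ with arrival times $\sigma^{(0)}_l=\sum_{j=1}^{l}E^{(0)}_j/f(j)$ ($E^{(0)}_j$ i.i.d.\ rate-one exponentials), so $\{\deg_{\GG_K}(v_0)\le d^*\}=\{\sigma^{(0)}_{d^*}>T_K\}$. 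The obstruction is that $\sigma^{(0)}_{d^*}$ and $T_K$ are positively correlated (a slowly reproducing root slows the whole population). I would circumvent this via the deterministic domination $T_K\le \widehat T_K:=\inf\{t:|BP^{(1)}_f(t)|=K\}$ (valid because $|BP^{(0)}_f(t)|\ge1$), where $\widehat T_K$ depends only on the subtree founded at $v_1$ and is therefore independent of $BP^{(0)}_f$. Using $\sigma^{(0)}_{d^*}\ge E^{(0)}_{i_0}/f(i_0)=E^{(0)}_{i_0}/f_*$ and independence,
\[
P\big(\deg_{\GG_K}(v_0)\le d^*\big)\ \ge\ P\big(E^{(0)}_{i_0}/f_*>\widehat T_K\big)\ =\ \mathbb E\big[e^{-f_*\widehat T_K}\big].
\]
Since $\widehat T_K\overset{d}{=}\inf\{t:|BP_f(t)|=K\}$, Theorem~\ref{brlim} gives (exactly as in \eqref{tnlim}) $R_K:=\widehat T_K-\tfrac1{\lambda^*}\log K\to-\tfrac1{\lambda^*}\log W_\infty$ a.s., whence $\mathbb E[e^{-f_*\widehat T_K}]=K^{-f_*/\lambda^*}\,\mathbb E[e^{-f_*R_K}]$ and, by Fatou's lemma, $\liminf_{K}\mathbb E[e^{-f_*R_K}]\ge\mathbb E[W_\infty^{f_*/\lambda^*}]>0$ because $W_\infty>0$ a.s. Thus $P(\deg_{\GG_K}(v_0)\le d^*)\ge c_1 K^{-f_*/\lambda^*}$ for all large $K$.

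For the other half of $\AA_{\beta,d^*,K}$, namely that at least $\beta K$ non-root vertices of $\GG_K$ have degree $\ge d^*$: since $d^*$ is fixed, the strong law for $\phi$-counted populations of $\widetilde{BP}_f$ (characteristic $\phi(s)=\mathbf 1\{\xi(s)\ge d^*-1\}$), together with $|\widetilde{BP}_f(T_K)|=K+1$, shows this count equals $(c_3+o(1))K$ a.s.\ for a constant $c_3>0$; I would fix $\beta:=c_3/2$ and bound the probability of the complementary event by $o(K^{-f_*/\lambda^*})$ using the quantitative fluctuation estimates behind Theorem~\ref{brlim} together with Lemmas~\ref{lefttail} and~\ref{Tbounds} (if convenient, restricting the count to the subtree founded at $v_1$, which is independent of $BP^{(0)}_f$ and, on an event of positive probability obtained by forcing the early children of $v_0$ to be born after a fixed time at only a constant-factor cost, carries at least half of $V(\GG_K)$). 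A union bound then gives $P(\AA_{\beta,d^*,K})\ge \tfrac{c_1}{2}K^{-f_*/\lambda^*}$ for all large $K$, and Lemma~\ref{lbpart1} yields $\liminf_{n\to\infty}P(v_0\notin H_{\lfloor\beta K/2\rfloor,\Psi}(\GG_n))\ge\tfrac{c_1}{4}K^{-f_*/\lambda^*}$. Given $\ep\in(0,1)$, taking $K=K(\ep)$ to be the largest integer with $\tfrac{c_1}{4}K^{-f_*/\lambda^*}>\ep$ (so $K(\ep)$ is of order $\ep^{-\lambda^*/f_*}$) makes $H_{\lfloor\beta K(\ep)/2\rfloor,\Psi}$ fail as a budget root finding algorithm at tolerance $\ep$, hence $K_\Psi(\ep)\ge\lfloor\beta K(\ep)/2\rfloor+1$; after shrinking the constant to absorb small $K(\ep)$ this gives $K_\Psi(\ep)\ge C_1\ep^{-\lambda^*/f_*}$ for all $\ep\in(0,1)$.

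I expect the main obstacle to be obtaining the order-$K^{-f_*/\lambda^*}$ lower bound on $P(\deg_{\GG_K}(v_0)\le d^*)$ \emph{with the exact exponent and no spurious logarithmic loss}, since such a loss would degrade the power of $\ep$ in the conclusion. Naive estimates using only deterministic lower bounds on the attachment normalization $\sum_{j=0}^{i}f(d_j(i))$ (or crude conditioning on $T_K$) produce strictly weaker exponents such as $f(1)/\lambda^*$. The two ingredients that make the sharp bound work are the independence of the two founding subtrees $BP^{(0)}_f,BP^{(1)}_f$ (which decouples $v_0$'s reproduction from an upper bound for the global clock $T_K$) and the extraction of the exponential right tail $e^{-f_* t}$ of $\sigma^{(0)}_{d^*}$ through its smallest-rate summand $E^{(0)}_{i_0}/f_*$, available precisely because $d^*$ is taken to be $\ge i_0$.
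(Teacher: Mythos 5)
Your proposal follows the paper's skeleton: reduce via Lemma \ref{lbpart1} with $d^*=i_0$ (the index attaining $f_*$) to a lower bound $P(\AA_{\beta,i_0,K})\ge cK^{-f_*/\lambda^*}$, and extract the exponent from the independence of the two founding subtrees in the embedding of Lemma \ref{ctbpem} together with the $e^{-f_*t}$ tail of the $i_0$-th interarrival time of the root's offspring process. Your handling of the root half is in fact cleaner than the paper's: the deterministic domination $T_K\le\widehat T_K$, with $\widehat T_K$ measurable with respect to $BP^{(1)}_f$ alone, followed by the \eqref{tnlim}-type convergence and Fatou (using $W_\infty>0$ a.s.) gives $P(\deg_{\GG_K}(v_0)\le i_0)\ge c_1K^{-f_*/\lambda^*}$ with the exact exponent, whereas the paper localizes $T_K$ in a window $[\frac{1}{\lambda^*}\log K-t_\delta,\frac{1}{\lambda^*}\log K+t_\delta]$ and pays a harmless constant factor $e^{-f_*t_\delta}$.

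The soft spot is how you combine this with the requirement that at least $\lfloor\beta K\rfloor$ other vertices have degree $\ge i_0$. Your union bound $P(A\cap B)\ge P(A)-P(B^c)$ needs $P(B^c)=o\bigl(K^{-f_*/\lambda^*}\bigr)$, i.e.\ a lower-tail estimate for the count of degree-$\ge i_0$ vertices decaying faster than the very polynomial rate you are capturing, and the tools you cite do not deliver this as stated: Lemma \ref{Tbounds} controls deviations of $T_n$ of size $\gamma_n\to\infty$ (with $\gamma_n=o(\log n)$ in part (i)), not fixed-gap events such as $\{T_K-T_{\lfloor\delta K\rfloor}<t\}$ that your sketch would need, and Lemma \ref{lefttail}/Theorem \ref{brlim} concern $W_\infty$ near zero, not this count. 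The estimate you want is true and can be proved with extra work (e.g.\ restrict to the first $\lfloor\delta K\rfloor$ vertices, whose indicators of producing $\ge i_0$ offspring within a fixed time $t$ of their own births are i.i.d.\ Bernoulli, and bound $P(T_K-T_{\lfloor\delta K\rfloor}<t)$ by a Chernoff-type bound on the total progeny generated in time $t$ by $\lfloor\delta K\rfloor$ individuals, dominating via $f(i)\le C_fi$ by sums of geometrics), but as written this step is missing. The paper's proof is organized precisely so that no vanishing estimate is needed: it factorizes $P(\AA_{\beta,i_0,K})$ so that the rare factor $K^{-f_*/\lambda^*}$ multiplies $P(G_{1,K}\cap G_{2,K}\cap G_{3,K})$, and the companion events (early population of the $v_1$-subtree, a binomial count, and two-sided localization of $T_K$) only need failure probabilities summing to a constant strictly below $1$, achieved by choosing $\delta$ small. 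Either adopt that factorization or supply the quantitative lower-tail bound; with one of these fixes your argument goes through, and your final extraction of $K_\Psi(\ep)\ge C_1\ep^{-\lambda^*/f_*}$ from the $K^{-f_*/\lambda^*}$ bound is fine.
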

\begin{proof}
Let $i_0$ be the minimum index such that $f(i_0)=f_*$. Note that such $i_0$ always exists because $f(k) \rightarrow \infty$ as $k \rightarrow \infty$ by Assumption (A1). We will apply Lemma \ref{lbpart1} with $d^* = i_0$ and an appropriate choice of $\beta$ to be fixed later. For $K\geq 1$, $\beta \in (0,1]$, recall the event
\begin{align*}
\AA_{\beta, i_0, K} &=\{ v_0 \text{ has degree $\leq i_0$ in $\GG_{K}$ and there are at least $\lfloor \beta K\rfloor$}\\
& \qquad \qquad \text{ other vertices with degree $\ge i_0$ in $\GG_{K}$}\}.
\end{align*}
First we give a lower bound on $P(\AA_{\beta, i_0, K})$ for an appropriate choice of $\beta$. Recall the embedding of our discrete tree process in the continuous time branching process described in Lemma \ref{ctbpem}.
Recall the branching process $BP^{(1)}_f(\cdot)$ representing the descendants of $v_1$. Let $\{\xi_{1,i}(\cdot) : i \in \mathbb{N}\}$ denote the point processes of reproduction of the vertices descending from $v_1$. For $\delta \in (0,1)$, let $t_\delta:=\frac{1}{2\lambda^*}\log (1/\delta)$ and $p:=P( \xi(t_\delta) \ge i_0)$ ($\xi(\cdot)$ is defined in \eqref{pprocess}). Define the events
\begin{align*} 
G_{1,K}&:=\left\{ BP^{(1)}_f( \frac{1}{\lambda^*}\log (\delta K))\geq (\delta K)/2\right\},\\
G_{2,K}&:=\left\lbrace \sum_{i=1}^{\lfloor \delta K/2 \rfloor} 1_{\{\xi_{1,i}(t_{\delta}) \ge i_0\}} \ge  p\delta K/3\right\rbrace,\\
G_{3,K}&:= \left\{ \frac{1}{\lambda^*}\log K -t_\delta< T_K< \frac{1}{\lambda^*}\log K +t_\delta \right\}.
\end{align*} 

Then we can observe that on $G_{1,K}\cap G_{3,K}$ at least $(\delta K)/2$ vertices have arrived before time $\frac{1}{\lambda^*}\log (\delta K)$, and hence, the point process of reproduction for each of these vertices has run for at least time $t_{\delta} = \frac{1}{2\lambda^*}\log(1/\delta)$ before time $T_K$. Thus, each of these vertices independently has degree $\ge i_0$ in $\widetilde{BP}_f(T_K)$ with probability at least $p$. Event $G_{2,K}$ then implies that there are at least $\lfloor \delta pK/3\rfloor$ vertices with degree $\ge i_0$ in $\GG_K$. Therefore, 
$$A_{\delta p/3, i_0, K}\supseteq G_{1,K}\cap G_{2,K}\cap G_{3,K} \cap \{\xi^{(0)}(T_K)\leq i_0\},$$
where $\xi^{(0)}(t)$ denotes the degree of $v_0$ at time $t$.
Since $\xi^{(0)}(t)$ and $BP^{(1)}(t)$ are independent, 
\begin{align}\label{badevent}
P(\AA_{\delta p/3, i_0, K})\geq P(\xi^{(0)}(T_K)\leq i_0| G_{3,K})\cdot P(G_{1,K}\cap G_{2,K}\cap G_{3,K}).
\end{align}
The first term in the lower bound above satisfies 
\begin{align*}
P(\xi^{(0)}(T_K)\leq i_0| G_{3,K}) &\geq P(\{\xi^{(0)}(T_K)\leq i_0\}\cap G_{3,K})\\
&\geq E\left[ P\left(\left\{\xi^{(0)}\left(\frac{1}{\lambda^*}\log K +t_\delta\right)\leq i_0\right\}\cap G_{3,K}|T_K\right)\right]\\
&\geq E\left[ 1_{ \{\frac{1}{\lambda^*}\log K -t_\delta< T_K< \frac{1}{\lambda^*}\log K +t_\delta\}} P\left(\frac{E_{i_0}}{f(i_0)} > \frac{1}{\lambda^*}\log K +t_\delta\right)\right]\\
&=E\left[ e^{-f_*\left(\frac{1}{\lambda^*}\log K +t_\delta\right)} 1_{ \{\frac{1}{\lambda^*}\log K -t_\delta< T_K< \frac{1}{\lambda^*}\log K +t_\delta\}} \right]\\
&= K^{-\frac{f_*}{\lambda^*}} e^{-f_*t_\delta}P(G_{3,K}).
\end{align*}
Next, we will show that $P(G_{3,K})$ and $P(G_{1,K}\cap G_{2,K}\cap G_{3,K})$ are lower bounded by some positive constants uniformly in $K$. We start with 
\begin{align*}
\limsup_{K\to\infty} P(G_{1,K}^c)&=\limsup_{K\to\infty}P( BP^{(1)}_f(\frac{1}{\lambda^*}\log (\delta K)) < (\delta K)/2)\\
&=\limsup_{K\to\infty}P\left(e^{-\lambda^* (\frac{1}{\lambda^*}\log (\delta K))}BP^{(1)}_f(\frac{1}{\lambda^*}\log (\delta K)) < \frac{1}{2}\right)\\
&\le P(W_\infty \le 1/2).
\end{align*}
Next, as $\sum_{i=1}^{\lfloor \delta K/2 \rfloor} 1_{\{\xi_{1,i}(t_{\delta}) \ge i_0\}} \sim Binomial(\lfloor \delta K/2 \rfloor,p)$,  $\lim_{K\to\infty}P(G_{2,K}^c)= 0$ by standard results on Binomial random variables. Lastly, using \eqref{tnlim},
\begin{align*}
\limsup_{K\to\infty}P(G_{3,n}^c)&\leq \limsup_{K\to\infty}P(T_K>\frac{1}{\lambda^*}\log K+t_\delta)+\limsup_{K\to\infty}P(T_K<  \frac{1}{\lambda^*}\log K -t_\delta)\\
&\le P\left(W_\infty \le \exp(-\frac{1}{2}\log(1/\delta))\right)+P\left(W_\infty \ge \exp(\frac{1}{2}\log(1/\delta))\right),
\end{align*}
where both term can be arbitrarily small when $\delta$ is small since $P(W_\infty>0)=1$ and $EW_\infty < \infty$. Choosing and fixing $\delta$ sufficiently small and collecting all the terms gives
\begin{align*}
&\liminf_{K\to\infty}P(G_{1,n}\cap G_{2,K}\cap G_{3,K})\geq 1-\limsup_{K\to\infty}(P(G_{1,K}^c)+P(G_{2,K}^c)+P(G_{3,K}^c))\\
&\ge 1-P(W_\infty \le 1/2) - P\left(W_\infty \le \exp(-\frac{1}{2}\log(1/\delta))\right) - P\left(W_\infty \ge \exp(\frac{1}{2}\log(1/\delta))\right)>0,
\end{align*}
and 
$$
\liminf_{K \to \infty}P(G_{3,K}) \ge \liminf_{K\to\infty}P(G_{1,K}\cap G_{2,K}\cap G_{3,K})>0.
$$
In the above, we have used $P(W_{\infty} \le 1/2) < 1$ as $W_{\infty}$ is supported on all of $\mathbb{R}_+$, by Theorem \ref{brlim}.

It thus follows from \eqref{badevent} that, for this fixed choice of $\delta \in (0,1)$, there exists some constant $C>0$ so that for all $K \geq 1$,
$$P(\AA_{\delta p/3, i_0, K})\geq C K^{-\frac{f_*}{\lambda^*}}.$$
Now, by Lemma \ref{lbpart1} with $d^* = i_0$ and $\beta = \delta p/3$, we obtain 
$$\liminf_{n\to\infty} P(v_0\notin H_{\lfloor\beta K/2\rfloor,\Psi}(\GG_n))\geq \frac{1}{2}P(\AA_{\delta p/3, i_0, K})\geq \frac{C}{2} K^{-\frac{f_*}{\lambda^*}},$$
which leads to the lower bound 
$$ K_\Psi(\ep)\geq \left(\frac{C}{2\ep}\right)^{\frac{\lambda^*}{f_*}}.$$

\end{proof}

\begin{lemma}\label{ubg1}
Under the assumptions in Theorem \ref{general} with $m=1$. For any $\delta \in (0,1)$, we have
$$ K_\Psi(\ep)\leq C''_\delta \ep^{-\frac{\lambda^*}{(1-\delta)f_*}},$$
where $C''_\delta$ is a constant depending on $\delta$.
\end{lemma}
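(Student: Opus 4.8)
The plan is to mirror the two-step strategy used for the upper bound in Theorem \ref{linearattach}, but now in the continuous-time branching process picture of Lemma \ref{ctbpem}. Fix $\delta \in (0,1)$ and pick an increasing sequence $\alpha_n \to \infty$ to be chosen at the end (one expects $\alpha_n$ of order $(1-\delta/2)\,\KK^{-1}$-type behaviour is too strong here; in fact $\alpha_n = c \log \log n$ or even a constant multiple of $\KK(\tfrac{1}{\lambda^*}\log n)$ will be needed — see below). Define, as in \eqref{A_n}--\eqref{E_n}, the events $A_n = \{\Phi_1(d_0(n)) \geq \alpha_n\}$ (the root has accumulated enough ``head start'' in the $\Phi_1$-scale) and $E_n = \{d_n(k) \geq d_0(k) \text{ for some } k \geq n\}$ (vertex $v_n$ ever catches the root). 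The goal is to find $n_0(\ep)$ with $P(E_n) \leq \ep$ for all $n \geq n_0(\ep)$, which gives $K_\Psi(\ep) \leq n_0(\ep)$ since on $E_n^c$ no later-born vertex overtakes $v_0$, so $v_0$ stays among the top $n$ degrees forever, hence in $H_{n,\Psi}(\GG_m)$ for all $m$ — actually one needs the stronger statement that $v_0$ is eventually in the top $n_0(\ep)$, which follows because only vertices born before $v_{n_0}$ can have $E$-type events, and these are finitely many.

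\textbf{Step 1 (degree comparison on $A_n$).} On $A_n$, I would run the martingale argument verbatim from Section \ref{linearub}: writing $\zeta_1^{(n)}(t) = d_0(n) + \xi_{d_0(n)-1}(t)$ for the root's continued degree evolution and $\zeta_2^{(n)}(t) = 1 + \xi_0(t)$ for $v_n$'s degree evolution (here $m=1$), the event $E_n \cap A_n$ forces $\inf_s M_{d_0(n)}(s) \leq -\alpha_n/4$ or $\sup_s M_1(s) \geq \alpha_n/4$ for $n$ large. By Lemma \ref{mprop}, since $\Phi_2(\infty) < \infty$, the first probability is at most $C\exp(-C'\sqrt{f_*(d_0(n))}\,\alpha_n)$ once $\alpha_n$ is large, and on $A_n$ we have $d_0(n) \geq \Phi_1^{-1}(\alpha_n)$, so $f_*(d_0(n)) \geq f_*(\lfloor \Phi_1^{-1}(\alpha_n)\rfloor)$; the second probability is at most $C\exp(-C'\alpha_n^2)$. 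Thus $P(E_n \cap A_n) \leq C\exp(-C'\sqrt{f_*(\lfloor\Phi_1^{-1}(\alpha_n)\rfloor)}\,\alpha_n) + C\exp(-C'\alpha_n^2)$.

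\textbf{Step 2 (the head start holds: bounding $P(A_n^c)$).} This is the main obstacle, and it is where the exponent $\lambda^*/((1-\delta)f_*)$ enters. Embedding via Lemma \ref{ctbpem}, $d_0(n) = 1 + \xi^{(0)}(T_n)$ where $T_n - \tfrac{1}{\lambda^*}\log n \to -\tfrac{1}{\lambda^*}\log W_\infty$ by \eqref{tnlim}, and using Lemma \ref{Tbounds}(ii) we can say $T_n \geq \tfrac{1}{\lambda^*}\log n - \gamma_n$ off an event of probability $\leq C_k e^{-k\lambda^*\gamma_n}$. On this good event, $\xi^{(0)}$ has run for time at least $\tfrac{1}{\lambda^*}\log n - \gamma_n$, and I need a lower tail bound on $\Phi_1(\xi^{(0)}(s))$ for such $s$. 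The cleanest route: $\Phi_1(\xi(s)) = S_1(\xi(s))+$ (interpolation error) $\approx s$ since $\sigma_{\xi(s)} \leq s < \sigma_{\xi(s)+1}$ and $S_1(\ell) = \sigma_\ell$; more precisely $|\Phi_1(\xi(s)) - s|$ is controlled by $M_0(s) = \sum_{i=1}^{\xi(s)} f(i)^{-1} - s$, and by Lemma \ref{mprop}, $\inf_{s<\infty} M_0(s) \geq -x$ with probability $\geq 1 - C\exp(-C'\sqrt{f_*}\,x)$ for $x$ large. Hence with probability $\geq 1 - C\exp(-C'\sqrt{f_*}(s - \alpha_n))$ we get $\Phi_1(\xi(s)) \geq s - (s-\alpha_n) = \alpha_n$... — wait, that is the wrong direction; the honest statement is $\Phi_1(\xi^{(0)}(s)) \geq s + M_0(s) \geq s - |{\inf M_0}|$, so to guarantee $\Phi_1(\xi^{(0)}(s)) \geq \alpha_n$ I take $s = \tfrac{1}{\lambda^*}\log n - \gamma_n$ and need $\tfrac{1}{\lambda^*}\log n - \gamma_n - |{\inf M_0}| \geq \alpha_n$. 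Choosing $\gamma_n$ growing slowly (e.g. $\gamma_n = (\log\log n)^2$) and $\alpha_n = (1-\delta)\tfrac{1}{\lambda^*}\log n$ roughly, the shortfall $\tfrac{1}{\lambda^*}\log n - \gamma_n - \alpha_n$ is of order $\delta\tfrac{1}{\lambda^*}\log n$, so
$$
P(A_n^c) \leq C_k e^{-k\lambda^*\gamma_n} + C\exp\!\Big(-C'\sqrt{f_*}\cdot \tfrac{\delta}{\lambda^*}\log n\Big) \leq C n^{-\delta\sqrt{f_*}\,C'/\lambda^*} + o(n^{-M}),
$$
for any $M$. Here I must be careful that the exponent matches: with $\alpha_n = (1-\delta)\tfrac{1}{\lambda^*}\log n$, Step 1 gives, using $\Phi_1^{-1}(\alpha_n) \approx \exp(c\alpha_n)$-scale lower bounds on $f_*(\lfloor\Phi_1^{-1}(\alpha_n)\rfloor)$ (which $\to \infty$), that $P(E_n \cap A_n)$ decays faster than any polynomial in $n$; so the dominant term is $P(A_n^c)$.

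\textbf{Putting it together.} Combining, $P(E_n) \leq P(E_n \cap A_n) + P(A_n^c) \leq C n^{-\lambda^*/((1-\delta')f_*) \cdot (\text{something})}$ — the exact bookkeeping should be arranged so that setting $\alpha_n$ optimally yields $P(E_n) \leq C_\delta\, n^{-(1-\delta)f_*/\lambda^*}$ up to lower-order corrections, hence $P(E_n) \leq \ep$ for $n \geq n_0(\ep) := C''_\delta\, \ep^{-\lambda^*/((1-\delta)f_*)}$, which gives the claimed bound $K_\Psi(\ep) \leq C''_\delta\, \ep^{-\lambda^*/((1-\delta)f_*)}$. The delicate point throughout is tracking how the factor $(1-\delta)$ propagates: the loss comes entirely from the fact that, unlike the linear case where $f_*(\lfloor\Phi_1^{-1}(\alpha_n)\rfloor)$ grows like $e^{\alpha_n}$ giving essentially free room, here for general sublinear-but-persistent $f$ we only know $f_*(\cdot) \to \infty$, so $\alpha_n$ cannot be pushed all the way to $\tfrac{1}{\lambda^*}\log n$ and we settle for $(1-\delta)\tfrac{1}{\lambda^*}\log n$. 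I would isolate the computation $P(\Phi_1(\xi^{(0)}(T_n)) < \alpha_n) \leq n^{-(1-\delta)f_*/\lambda^*+o(1)}$ — proved via the above $T_n$-lower-bound plus $\inf M_0$ estimate, or alternatively by a direct generating-function computation on $\xi$ dominated below by a pure-birth process with rate $f_*$ after the degree exceeds $i_0$ — as the technical heart of the lemma.
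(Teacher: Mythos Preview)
Your overall two--step architecture (martingale comparison on $A_n$, then bound $P(A_n^c)$ via $T_n$ and a lower tail on $\Phi_1(\xi^{(0)}(s))$) is exactly the paper's strategy. The gap is in the quantitative execution of Step~2, and relatedly in your choice of $\alpha_n$.

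\textbf{The exponent does not come out.} You bound $P(\Phi_1(\xi^{(0)}(s))<\alpha_n)$ via $P(\inf_t M_0(t)\le -(s-\alpha_n))$ and invoke Lemma~\ref{mprop}. For $x$ large that lemma only gives $P(\inf_t M_0(t)\le -x)\le C_1\exp(-C_2\sqrt{f_*}\,x)$, with $C_2$ an unspecified constant from \cite{BBpersistence}. This yields $P(A_n^c)\le C\,n^{-C_2\sqrt{f_*}(1/\lambda^*-\alpha_n/\log n)}$, and no amount of bookkeeping turns $C_2\sqrt{f_*}$ into $f_*$. The paper instead does a direct Chernoff bound: for $\theta\in(0,f_*)$,
\[
\log E\bigl[e^{\theta(S_1(l)-\Phi_1(l+1))}\bigr]=\sum_{k\ge 2}\frac{\theta^k}{k}\Phi_k(l+1)\le \frac{f_*}{f_*-\theta}\theta^2\Phi_2(\infty),
\]
so $P(S_1(l)-\Phi_1(l+1)\ge x)\le C_\theta e^{-\theta x}$ with rate exactly $\theta$, which can be pushed to $(1-\delta/4)f_*$. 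This is precisely the ``direct generating-function computation'' you mention in passing as an alternative; it is in fact the crux, not an option.

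\textbf{The $\alpha_n$ is pushed the wrong way.} You take $\alpha_n=(1-\delta)\tfrac{1}{\lambda^*}\log n$, leaving only $s-\alpha_n\approx\tfrac{\delta}{\lambda^*}\log n$ of slack for Step~2, so the Step~2 exponent carries a \emph{small} factor $\delta$ --- the opposite of the target $(1-\delta)f_*/\lambda^*$. The correct balance is the reverse: since Step~1 only needs $\alpha_n\to\infty$ (both $\exp(-C'\alpha_n^2)$ and $\exp(-C'\sqrt{f_*(\lfloor\Phi_1^{-1}(\alpha_n)\rfloor)}\,\alpha_n)$ decay superpolynomially once $\alpha_n=\eta\log n$ for any $\eta>0$), one should take $\alpha_n=\eta\log n$ with $\eta$ \emph{small} --- the paper uses $\eta=\delta/(2\lambda^*)$ and $\gamma_n=\tfrac{\delta}{4\lambda^*}\log n$ --- so that $s-\alpha_n\approx(1-3\delta/4)\tfrac{1}{\lambda^*}\log n$ and the Chernoff bound with $\theta=(1-\delta/4)f_*$ delivers the exponent $(1-\delta)f_*/\lambda^*$.
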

\begin{proof}
Let $ A_n, E_n$ be the events defined in \eqref{A_n}, \eqref{E_n}. The proof follows essentially the same strategy as the proof in Section \ref{linearub}. A slight difference here is we choose $\alpha_n=\eta\log n$ instead of $M(\log n)^{1/2}$ for some small $\eta>0$.
The same calculation that leads to \eqref{an} and \eqref{bn} yields, 
$$P(E_n\cap A_n)\leq C\exp(-C'\alpha_n^2) +C\exp\left( -C'\sqrt{f_*(\lfloor \Phi_1^{-1}(\alpha_n)\rfloor)}\alpha_n\right).$$
Fix any $M>f_*/\lambda^*$. Since $f_*(\lfloor \Phi_1^{-1}(\alpha_n)\rfloor)\to \infty$ as $n\to\infty$, we have 
\begin{equation}\label{etaeq}
P(E_n\cap A_n)\leq \hat C_{\eta}n^{-M}, \ n \ge 1.
\end{equation}
for some constant $\hat C_{\eta}>0$ depending on $\eta$.

The calculations for Step 2, though following the same broad strategy, rely on more general properties of the point process $\xi$ defined in \eqref{pprocess}, which we now present. To simplify notation let $\sigma''_n=\frac{1}{\lambda^*}\log n-\gamma_n$. Recall that $\xi(t)$ is the point process that encodes the reproduction times of an individual in the continuous time embedding of our discrete tree process. We can write
$$P(A_n^c)=P(\Phi_1(d_0(n))<\alpha_n)=P(\Phi_1(\xi(T_n))<\alpha_n)\leq P(T_n< \sigma''_n)+P(\Phi_1(\xi(\sigma''_n))<\alpha_n).$$
First we will estimate $P(\Phi_1(\xi(\sigma''_n))<\alpha_n)$. Recall $S_1(\cdot)$ from \eqref{sdef}. Note that 
\begin{align}\label{S1concentrationa}
\nonumber P(\Phi_1(\xi(\sigma''_n))<\alpha_n)&=P(\xi(\sigma''_n)< \Phi^{-1}_1(\alpha_n)) \le P(S_1(\lceil \Phi^{-1}_1(\alpha_n)\rceil)\geq \sigma''_n)\\
&=P(S_1(\lceil \Phi^{-1}_1(\alpha_n)\rceil)- \Phi_1(\Phi^{-1}_1(\alpha_n))\geq \sigma''_n-\alpha_n)\nonumber\\
&\le P\left(S_1(\lceil \Phi^{-1}_1(\alpha_n)\rceil)- \Phi_1(\lceil \Phi^{-1}_1(\alpha_n)\rceil + 1)\geq \sigma''_n-\alpha_n - \frac{2}{f_*(\lfloor\Phi^{-1}_1(\alpha_n)\rfloor)}\right).
\end{align}
For any $l\in \mathbb{N}$ and $\theta\in (0,f_*)$, we have 
\begin{align*}
\log E[ \exp(\theta(S_1(l)-\Phi_1(l+1)))]&=-\theta \Phi_1(l+1)-\sum_{i=1}^{l-1}\log (1-\frac{\theta}{f(i)})\\
&=\sum_{k=2}^\infty \frac{\theta^k}{k}\Phi_k(l+1) \le \theta^2\Phi_2(l+1) \sum_{k=2}^{\infty} \left(\frac{\theta}{f_*}\right)^{k-2}\nonumber\\
&= \frac{f_*}{f_* - \theta}\theta^2\Phi_2(l+1).
\end{align*}
It follows that for any $x>0$, $l\in\mathbb{N}$ and $\theta\in (0,f_*)$,
\beq\label{S1concentrationb}
P(S_1(l)-\Phi_1(l+1)\geq x)\leq e^{-\theta x}\exp\left[\frac{f_*}{f_* - \theta}\theta^2\Phi_2(l+1)\right]\leq  e^{-\theta x}\exp\left[\frac{f_*}{f_* - \theta}\theta^2\Phi_2(\infty)\right].
\eeq
Hence, by \eqref{S1concentrationa} and \eqref{S1concentrationb}, we obtain for any $\theta\in (0,f_*)$,
$$P(\Phi_1(\xi(\sigma''_n))<\alpha_n)\leq e^{2\theta/f_*}e^{-\theta (\sigma''_n-\alpha_n)}\exp\left[\frac{f_*}{f_* - \theta}\theta^2\Phi_2(\infty)\right] = C^*_{\theta} e^{\theta(\alpha_n+\gamma_n)} n^{-\theta /\lambda^*},$$
for any $n \ge 1$, where $C^*_{\theta} := e^{2\theta/f_*}\exp\left[\frac{f_*}{f_* - \theta}\theta^2\Phi_2(\infty)\right]$.
Recall that $\alpha_n=\eta \log n$. Since $\theta$ can be chosen arbitrarily close to $f_*$, for any $\delta>0$ we can choose $\theta=(1-\delta/4)f_*$ so that, writing $C'_{\delta} := C^*_{(1-\delta/4)f_*}$,
$$P(\Phi_1(\xi(\sigma''_n))<\alpha_n)\leq C'_{\delta} e^{f_*(\alpha_n+\gamma_n)} n^{-\frac{(1-\delta/4)f_* }{\lambda^*}}=C'_{\delta}e^{f_*\gamma_n}n^{-\frac{(1-\lambda^*\eta-\delta/4)f_*}{\lambda^*}}, \ n \ge 1.$$

Now, choosing $\gamma_n=\frac{\delta}{4\lambda^*} \log n$ and any integer $k=k_\delta > 4(1-\delta)f_*/(\delta \lambda^*)$, \eqref{Tlower} yields
$$P(T_n< \sigma''_n)\leq C_{k_\delta} n^{-\delta  f_* k_\delta/4}\leq C_{k_\delta} n^{-\frac{(1-\delta)f_*}{\lambda^*}}.$$
Therefore, taking $\eta = \delta/(2\lambda^*)$,
\begin{align*}
P(A_n^c)&\leq C'_{\delta}e^{f_*\gamma_n}n^{-\frac{(1-\lambda^*\eta-\delta/4)f_*}{\lambda^*}}+C_{k_\delta} n^{-\frac{(1-\delta)f_*}{\lambda^*}} = \tilde C_\delta  n^{-\frac{(1-\delta)f_*}{\lambda^*}},
\end{align*} 
where $\tilde C_\delta := C'_{\delta} + C_{k_\delta}$. Therefore, using \eqref{etaeq} and recalling $M>f_*/\lambda^*$, we obtain for all $n\geq 1$,
$$P(E_n)\leq P(E_n\cap A_n)+P(A_n^c)\leq \hat{C}'_{\delta} n^{-M}+ \tilde C_\delta  n^{-\frac{(1-\delta)f_*}{\lambda^*}}\leq \tilde{C}'_\delta  n^{-\frac{(1-\delta)f_*}{\lambda^*}},$$
where $\hat{C}'_{\delta} := \hat C_{\delta/(2\lambda^*)}$ and $\tilde{C}'_\delta := \hat{C}'_{\delta} + \tilde C_\delta$.
The above bound gives the following upper bound on $K_\Psi(\ep)$:
$$K_\Psi(\ep)\leq \left\lceil\left(\frac{\tilde{C}'_\delta}{\ep}\right)^{\frac{\lambda^*}{(1-\delta)f_*}}\right\rceil,$$
which proves the lemma.
\end{proof}

\subsection{For general $m>1$.}
\mn
\textbf{Lower bound.} 
\begin{lemma}\label{lbgm}
Under the assumptions in Theorem \ref{general} with $m>1$, there exists some $C_1>0$ so that  
$$K_\Psi(\ep)\geq C_1\ep^{-\frac{f_*}{mf(m)}}.$$
\end{lemma}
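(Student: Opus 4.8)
The plan is to apply the universal budget lower-bound scheme of Section \ref{lowerunif}, namely Lemma \ref{lbpart1}, with the choices $d^* = m$ and $\beta = 1$. The starting observation is that, because $m > 1$ and every vertex $v_i$ with $i \ge 1$ enters the network carrying $m$ edges, each of $v_1,\dots,v_K$ (and also $v_0$) has degree at least $m$ in $\GG_K$. Hence $\GG_K$ automatically has $K$ vertices other than $v_0$ of degree $\ge m$, and the event $\AA_{1,m,K}$ reduces to $\AA_K := \{v_0 \text{ has degree exactly } m \text{ in } \GG_K\}$, equivalently the event that no edge added to the network after $\GG_1$ ever attaches to $v_0$. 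By Lemma \ref{lbpart1}, it then suffices to produce a constant $C' > 0$ with $P(\AA_K) \ge C' K^{-mf(m)/f_*}$ for all $K \ge 1$.

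The main computation, carried out just as in the linear case (Lemma \ref{lbpart2}), is the lower bound on $P(\AA_K)$. I would fix $i_0 \in \mathbb{N}$ large enough that $f(m)/((i_0+1)f_*) \le \tfrac12$ (possible since $f_* \le f(m)$), set $C_0 := P(\AA_{i_0}) > 0$, and write $\AA_K = \AA_{i_0} \cap \{\text{none of } e_k,\ s_{i_0} < k \le s_K,\ \text{attaches to } v_0\}$. On the conditioning events arising here, $d_0(k-1) = m$, so for $s_i < k \le s_{i+1}$ the conditional probability that $e_k$ misses $v_0$ equals $1 - f(m)/\sum_{j=0}^i f(d_j(k-1))$; since $v_0,\dots,v_i$ have all entered by step $s_i \le k-1$ and hence have degree $\ge m$, we get $\sum_{j=0}^i f(d_j(k-1)) \ge (i+1)f_*$, so this probability is at least $1 - f(m)/((i+1)f_*)$. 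Iterating conditional expectations over the $m$ edges attached with each new vertex gives $P(\AA_K) \ge C_0 \prod_{i=i_0}^{K-1}(1 - f(m)/((i+1)f_*))^m$, and the inequality $\log(1-x) \ge -x-x^2$ on $(0,\tfrac12]$ together with convergence of $\sum_i (i+1)^{-2}$ turns this into $P(\AA_K) \ge C' K^{-mf(m)/f_*}$ for all $K \ge 1$, the finite range $K \le i_0$ being absorbed into $C'$ using $\AA_{i_0} \subseteq \AA_K$.

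It then remains to unwind the definitions. Lemma \ref{lbpart1} with $d^* = m$, $\beta = 1$ gives, for $K \ge 4$, that $\liminf_{n\to\infty} P(v_0 \notin H_{\lfloor K/2\rfloor,\Psi}(\GG_n)) \ge \tfrac12 P(\AA_K) \ge \tfrac{C'}{2} K^{-mf(m)/f_*}$. For $\ep$ small, taking $K = K(\ep)$ to be the largest integer with $\tfrac{C'}{2} K^{-mf(m)/f_*} > \ep$, so that $K(\ep)$ is of order $\ep^{-f_*/(mf(m))}$, the map $H_{\lfloor K(\ep)/2\rfloor,\Psi}$ fails \eqref{bug} and hence $K_\Psi(\ep) > \lfloor K(\ep)/2\rfloor \ge C_1 \ep^{-f_*/(mf(m))}$; the remaining range of $\ep$ bounded away from $0$ is handled by shrinking $C_1$, since $K_\Psi(\ep) \ge 1$ always.

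I do not expect a genuine obstacle here: the argument is a routine adaptation of the linear-case lower bound, with the explicit attachment weights replaced everywhere by the crude uniform bound $f(d) \ge f_*$, valid for all $d \ge m$. The two points needing a little attention are (a) the reduction of $\AA_{1,m,K}$ to $\AA_K$, which uses that every non-root vertex has degree at least $m$, and (b) keeping track of the correct number $i+1$ of ``existing'' vertices available to an incoming edge $e_k$ with $s_i < k \le s_{i+1}$ when lower-bounding the denominator --- both straightforward.
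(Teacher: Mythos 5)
Your proposal is correct and follows essentially the same route as the paper: Lemma \ref{lbpart1} with $\beta=1$, $d^*=m$, the reduction of $\AA_{1,m,K}$ to the event that $v_0$ has degree exactly $m$ in $\GG_K$, and the conditional product bound $P(\AA_K)\ge P(\AA_{i_0})\prod_{i=i_0}^{K-1}\bigl(1-f(m)/((i+1)f_*)\bigr)^m \ge C K^{-mf(m)/f_*}$ via $\log(1-x)\ge -x-x^2$. The only difference is that you spell out the final unwinding to $K_\Psi(\ep)\ge C_1\ep^{-f_*/(mf(m))}$ more explicitly than the paper, which leaves it implicit as in the linear case.
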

\begin{proof}
The proof is essentially the same as that of Lemma \ref{lbpart2} with a different lower bound on $P(\AA_K)$ for the event $\AA_K=\{ v_0 \text{ has degree $m$ in $\GG_K$}\}$. Let $i_0\geq 1$ be such that $\frac{f(m)}{f_*(i_0+1)}\leq 1/2$.
\begin{align*}
P(\AA_K)&=E(1\{\AA_{K-1}\}P(\AA_K|\GG_{K-1}))\\
&= E\left(1\{\AA_{K-1}\}  \prod_{i=1}^m \left(1-\frac{f(m)}{\sum_{j=0}^{K-1}f(d_j(s_{K-1}+i-1))}\right)\right)\\
&\geq E\left(1\{\AA_{K-1}\} \left(1-\frac{f(m)}{Kf_*}\right)^m\right)\geq P(\AA_{i_0})\prod_{i=i_0}^{K-1} \left( 1-\frac{f(m)}{(i+1)f_*} \right)^m\\
&=P(\AA_{i_0})\exp\left(m \sum_{i=i_0}^{K-1} \log \left( 1-\frac{f(m)}{(i+1)f_*} \right)\right)\geq C K^{-\frac{mf(m)}{f_*}}.
\end{align*}
Again, applying Lemma \ref{lbpart1} with the above estimate gives the desired the result.
\end{proof}

\subsubsection{Upper bound}
\begin{lemma}\label{ubgm}
Under the assumptions in Theorem \ref{general} with $m>1$, for any $\delta \in (0,1)$, there exists $\bar{C}_{\delta}>0$ such that for all $\ep \in (0,1)$,
$$K_\Psi(\ep) \leq \bar{C}_{\delta}\ep^{-\frac{2C_f}{(1-\delta)f_*}}.$$
\end{lemma}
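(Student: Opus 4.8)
\emph{Outline of the proof.} The plan is to run the two--step programme of Section~\ref{linearub} (the argument being the $m>1$ analogue of Lemma~\ref{ubg1}), but working throughout with the collapsed branching process embedding $\{CBP_f(l)\}$ of Lemma~\ref{cbp} in place of the Athreya--Karlin embedding, and replacing the explicit Yule computations of the linear case by the robust point--process estimates already used in Lemma~\ref{ubg1}. As in \eqref{A_n}, \eqref{E_n} put $A_n=\{\Phi_1(d_0(s_n))\ge\alpha_n\}$ and $E_n=\{d_n(k)\ge d_0(k)\text{ for some }k\ge s_n\}$, with $\alpha_n:=\eta\log n$ for a small $\eta=\eta(\delta)>0$ to be fixed. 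The reduction used for the upper bound in Theorem~\ref{linearattach} (together with the left--tail bound Lemma~\ref{lefttail} controlling the root's degree) reduces matters to exhibiting $n_0(\ep)$ with $P(E_n)\le\ep$ for all $n\ge n_0(\ep)$, whence $K_\Psi(\ep)\le n_0(\ep)$; we then bound $P(E_n)\le P(E_n\cap A_n)+P(A_n^c)$ and estimate the two terms.

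\emph{Step 1: $P(E_n\cap A_n)$.} Condition on the stopped filtration $\FF_{\tau_n}$ of the collapsed process up to $\tau_n$, as in the proof of Lemma~\ref{lbpart1}. By Lemma~\ref{cbp} and the Markov property of the pure--birth process $\xi^{m,(v_0)}$, conditionally on $\FF_{\tau_n}$ the continuous--time degrees of $v_0$ and of $v_n$ after $\tau_n$ are, in law, $d_0(s_n)+\xi_{d_0(s_n)-1}(\cdot)$ and $m+\xi_{m-1}(\cdot)$ respectively, and are independent. Rewriting $E_n$ through the degree martingales of \eqref{madef} exactly as in the derivation of \eqref{an}--\eqref{bn}, on $E_n\cap A_n$ one must have either $\sup_s M_{m-1}(s)\ge\alpha_n/4$ or $\inf_s M_{d_0(s_n)-1}(s)\le-\alpha_n/4$ while $d_0(s_n)\ge\Phi_1^{-1}(\alpha_n)$. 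Since the constants in Lemma~\ref{mprop} are uniform in the shift and $f_*(\lfloor\Phi_1^{-1}(\alpha_n)\rfloor)\to\infty$ as $n\to\infty$ by Assumption~(A1), taking expectations over $\FF_{\tau_n}$ yields, exactly as in \eqref{etaeq},
$$P(E_n\cap A_n)\ \le\ \hat C_\eta\, n^{-M}$$
for any prescribed $M$ (to be fixed at the end).

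\emph{Step 2: $P(A_n^c)$.} By Lemma~\ref{cbp}, $d_0(s_n)=m+\xi^{m,(v_0)}(\tau_n)$ with $\xi^{m,(v_0)}$ non--decreasing and $\xi^{m,(v_0)}(\cdot)\eqd\xi_{m-1}(\cdot)$, so for any deterministic $t_n>0$,
$$P(A_n^c)\ \le\ P(\tau_n<t_n)\ +\ P\big(\Phi_1(m+\xi_{m-1}(t_n))<\alpha_n\big).$$
For the second term, $\Phi_1(m+\xi_{m-1}(t))=\Phi_1(m)+\sum_{i=1}^{\xi_{m-1}(t)}f_{m-1}(i)^{-1}$, so the computation of \eqref{S1concentrationa}--\eqref{S1concentrationb}, with $S_1(\cdot)$ replaced by the arrival--time process of $\xi_{m-1}$ and using $\sum_{i\ge1}f_{m-1}(i)^{-2}=\sum_{j\ge m}f(j)^{-2}\le\Phi_2(\infty)<\infty$ (the persistent regime enters here) and $f_{m-1}(i)\ge f_*$, gives, for every $\theta\in(0,f_*)$ and all $n$,
$$P\big(\Phi_1(m+\xi_{m-1}(t_n))<\alpha_n\big)\ \le\ C_\theta\, e^{\theta\alpha_n}\, e^{-\theta t_n}.$$
For the first term one needs a sharp lower bound on the birth time $\tau_n$ of the $n$-th vertex in the collapsed process; this is the genuinely new ingredient. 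The point is that on $(\tau_l,\tau_{l+1})$ only $v_0,\dots,v_l$ reproduce (not the at most $m-1$ pending newborns), and their total degree never exceeds $2m(l+1)$, so $f(i)\le C_fi$ forces the total reproduction rate on $(\tau_l,\tau_{l+1})$ to be at most $2mC_f(l+1)$. A sequential thinning coupling then shows that $\tau_n$ stochastically dominates $\sum_{l=1}^{n-1}\Gamma_l$ with independent $\Gamma_l\sim\mathrm{Gamma}(m,\,2mC_f(l+1))$. Since $E\sum_l\Gamma_l=\tfrac1{2C_f}\sum_{l=2}^n l^{-1}$, a Chernoff bound (with $Ee^{-s\Gamma_l}=(1+\tfrac{s}{2mC_f(l+1)})^{-m}$, so that $-\log Ee^{-s\sum_l\Gamma_l}=\tfrac{s}{2C_f}\log n+c_{s,m}$ for a term $c_{s,m}$ bounded in $n$) gives, for each fixed $s>0$ and all $n$,
$$P\Big(\tau_n<\tfrac{1-\delta_1}{2C_f}\log n\Big)\ \le\ C_{s,m}\, n^{-s\delta_1/(2C_f)},$$
which is $\le Cn^{-M}$ once $s$ is chosen large. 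Taking $t_n=\tfrac{1-\delta_1}{2C_f}\log n$ and $\theta=(1-\delta_2)f_*$, the free parameters $\delta_1,\delta_2,\eta$ can be chosen small in terms of $\delta$ so that $\theta(t_n-\alpha_n)\ge\tfrac{(1-\delta)f_*}{2C_f}\log n$, whence $P(A_n^c)\le C_\delta\, n^{-(1-\delta)f_*/(2C_f)}$.

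Choosing $M\ge(1-\delta)f_*/(2C_f)$ in Steps 1 and 2 and combining, $P(E_n)\le C'_\delta\, n^{-(1-\delta)f_*/(2C_f)}$ for all $n$, so $n_0(\ep):=\big\lceil(C'_\delta/\ep)^{2C_f/((1-\delta)f_*)}\big\rceil$ works and $K_\Psi(\ep)\le\bar C_\delta\,\ep^{-2C_f/((1-\delta)f_*)}$. I expect the main obstacle to be the lower bound on $\tau_n$ in Step 2: because $f$ is not assumed monotone, it can be controlled only through its linear envelope $C_fi$, and this is precisely why the exponent carries the factor $2C_f$ — the Malthusian rate of the linear attachment function $C_fi$ — rather than the smaller rate $\lambda^*$ of $f$; one must also use that the collapsed construction freezes the newborns within each $(\tau_l,\tau_{l+1})$, which keeps the total reproduction rate at $O(C_f\cdot l)$ and makes the Chernoff estimate for $\sum_l\Gamma_l$ effective.
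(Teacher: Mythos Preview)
Your proposal is correct and follows essentially the same route as the paper: both use the collapsed branching process embedding, control $P(E_n\cap A_n)$ through the degree martingales of Lemma~\ref{mprop}, and bound $P(\tau_n<t_n)$ by stochastically dominating $\tau_n$ from below by a sum of Gamma waiting times obtained from the linear envelope $f(i)\le C_fi$. The differences are cosmetic (you take a fixed large Chernoff parameter $s$ while the paper uses $\theta_n=\log n$, and your rate cap $2mC_f(l+1)$ is in fact slightly more careful than the paper's $2mC_fl$, which glosses over the degree increments within $(\tau_l,\tau_{l+1})$); the parenthetical reference to Lemma~\ref{lefttail} is not needed here.
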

\begin{proof}
We will use the construction and terminology of the collapsed branching process embedding defined in Section \ref{BPpre}. Let $\xi^{(0)}(t)$ denote the `continuous time' degree of the root at time $t$.  Let $\{\sigma^{\circ}_n\}_{n\geq 1}$, $\{\alpha^{\circ}_n\}_{n \ge 1}$ be increasing sequences to be specified later. We follow the same strategy as in Section \ref{linearub}. Define the events $A_n=\{ \Phi_1(d_0(s_n)) \geq \alpha^{\circ}_n\}$
and 
$
E_n=\{ d_n(k)\geq d_0(k) \text{ for some }k\geq s_n\}.
$
We begin by upper bounding $P(A_n^c)$,
\begin{align*}
P(A_n^c)&= P(\Phi_1(d_0(s_n))< \alpha^{\circ}_n)=P(\Phi_1(\xi^{(0)}(\tau_n))<\alpha^{\circ}_n)\\
&\leq P(\Phi_1(\xi^{(0)}(\sigma^{\circ}_n))<\alpha^{\circ}_n)+P(\tau_n<\sigma^{\circ}_n).
\end{align*}
Take any $l \ge 1$. Recall that $\tau_{l+1}=\tau_l+\Theta_{l+1}$, where $\Theta_{l+1}$ is the first time since $\tau_l$ when there are $m$ newly born individuals $v_{l+1,1}, \dots, v_{l+1,m}$. Let $\Theta_{l+1,i}$ denote the arrival time of $v_{l+1,i}$. Also, denote by $\FF_l$ the filtration generated by the collapsed branching process till time $\tau_l$. It is not hard to see that, conditional on $\FF_l$, the distribution of $\Theta_{l+1,1}$ follows the exponential distribution with rate $\sum_{i=0}^l f( d_i(s_l))$. By Assumption (A1), $f(i)\leq C_f i$ for all $i\geq m$, so $\sum_{i=0}^l f( d_i(s_l))\leq C_f\sum_{i=0}^l d_i(s_l)=2s_l C_f$. That is, $\Theta_{l+1,1}$ stochastically dominates an exponential random variable with rate $2s_lC_f$.
Let $\{E_{l,i}: 1\leq i\leq m, l\geq 1\}$ denote a collection of i.i.d. exponential random variables with rate 1. Then 
$$\Theta_{l+1}=\Theta_{l+1,m} \overset{d}{\geq} \sum_{i=1}^m \frac{E_{l,i}}{2s_lC_f}= \sum_{i=1}^m \frac{E_{l,i}}{2mC_f l}.$$
Hence, for $\theta_n=\log n$,
\begin{align*}
E e^{-\theta_n \tau_n}&= E \exp(-\theta_n \sum_{l=2}^{n} \Theta_{l})\leq E \exp\left(-\theta_n\sum_{l=2}^n \sum_{i=1}^m \frac{E_{l,i}}{2mC_f l}\right)\\
&=\exp\left(-m\sum_{l=2}^n \log\left(1+\frac{\theta_n}{2mC_f l}\right)\right)\leq C\exp\left(-m\sum_{l=\lfloor \theta_n/(mC_f)\rfloor+1}^n \frac{\theta_n}{2mC_f l}\right)\\
&\leq C\exp\left( -\frac{\theta_n}{2C_f} (\log n-\log\log n -C')\right),
\end{align*}
where we used $\log(1 + x) \ge x - x^2$ for $x \in [0,1/2]$ to obtain the second inequality above.
Take any $\delta \in (0,1)$. Let $\sigma^{\circ}_n=\frac{(1-\delta/4)\log n}{2C_f}$. Then
\begin{align*}
P(\tau_n<\sigma^{\circ}_n)&\leq e^{\theta_n \sigma^{\circ}_n}E e^{-\theta_n \tau_n}\\
&\leq C e^{\theta_n \sigma^{\circ}_n}\exp\left( -\frac{\theta_n}{2C_f} (\log n-\log\log n -C')\right)\\
&\leq C''n^{-\theta_n\delta/16C_f}\leq \tilde{C}n^{-\frac{f_*}{2C_f}} \ \ n \ge n_{\delta},
\end{align*}
for positive constants $C'', \tilde{C}$, when $n_{\delta}$ is chosen sufficiently large. The same calculation as in \eqref{S1concentrationa} and \eqref{S1concentrationb} implies for any $\theta\in(0,f_*)$,
$$P(\Phi_1(\xi(\sigma^{\circ}_n))<\alpha^{\circ}_n)\leq e^{2\theta/f_*}e^{-\theta (\sigma^{\circ}_n-\alpha^{\circ}_n)}\exp\left[\frac{f_*}{f_* - \theta}\theta^2\Phi_2(\infty)\right] = C^*_{\theta} e^{\theta\alpha^{\circ}_n} n^{-(1-\delta/4)\theta /2C_f},$$
for any $n \ge 1$, where $C^*_{\theta} := e^{2\theta/f_*}\exp\left[\frac{f_*}{f_* - \theta}\theta^2\Phi_2(\infty)\right]$.
Choose $\alpha^{\circ}_n=(\delta/(8C_f)) \log n$. Since $\theta$ can be chosen arbitrarily close to $f_*$, for any $\delta>0$ we can choose $\theta=(1-\delta/4)f_*$ so that, writing $C'_{\delta} := C^*_{(1-\delta/4)f_*}$, 
$$P(\Phi_1(\xi(\sigma^{\circ}_n))<\alpha^{\circ}_n)\leq C'_{\delta} n^{\frac{\delta f_*}{8C_f}-\frac{(1-\delta/4)^2f_*} {2C_f}}\leq C'_{\delta}n^{-\frac{(1-\delta)f_*}{2C_f}} , \ n \ge 1.$$
Again, using \eqref{etaeq} and recalling $M>f_*/(2C_f)$ gives that for all $n\geq 1$,
$$P(E_n)\leq P(E_n\cap A_n)+P(A_n^c)\leq \hat{C}'_{\delta} n^{-M}+  C'_\delta  n^{-\frac{(1-\delta)f_*}{2C_f}}\leq \tilde{C}'_\delta  n^{-\frac{(1-\delta)f_*}{2C_f}},$$
where $\hat{C}'_{\delta} := \hat C_{\delta/(8C_f)}$ and $\tilde{C}'_\delta := \hat{C}'_{\delta} +  C'_\delta$.
The above bound gives the following upper bound on $K_\Psi(\ep)$:
$$K_\Psi(\ep)\leq \left\lceil\left(\frac{\tilde{C}'_\delta}{\ep}\right)^{\frac{2C_f}{(1-\delta)f_*}}\right\rceil,$$
which proves the lemma.
\end{proof}

\begin{proof}[Proof of Theorem \ref{general}]
The lower and upper bounds in part (i) ($m_i \equiv m= 1$ case) follow respectively from Lemma \ref{lbm1} and Lemma \ref{ubg1}. Those for part (ii) ($m_i \equiv m > 1$ case) respectively follow from Lemma \ref{lbgm} and Lemma \ref{ubgm}.
\end{proof}

\section{Non-persistent regime: Proof of Theorem \ref{nonpersistent}}\label{nonsec}

This section is dedicated to the proof of Theorem \ref{nonpersistent}. We will assume throughout that $m_i \equiv m=1$. For $n \in \mathbb{N}$, denote by $v_{max}(n)$ the youngest vertex with the maximal degree in $\GG_n$.

The following lemma quantifies the probability of $v_{max}(n)$ lying within a certain distance of the root in $\GG_n$.

\begin{lemma}\label{vmaxerr}
Let $r_n=c_1\lambda^*\KK(\frac{1}{\lambda^*}\log n)$ where $c_1$ is defined as in Lemma \ref{height}. There exist $C,C'>0$ so that for all $n\geq 1$,
$$P( v_{max}(n)\notin B_n(v_0,r_n)))\leq C\exp\left(-C'\KK(\frac{1}{\lambda^*}\log n)\right).$$
\end{lemma}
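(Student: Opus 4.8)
The plan is to transfer everything to the continuous time branching process via the embedding of Lemma \ref{ctbpem} and then combine the two main inputs already established: the bound on the \emph{age} of the youngest maximal degree vertex (Lemma \ref{maxposition}) and the bound on the \emph{height} of a CTBP (Lemma \ref{height}). First I would couple $\GG_n$ with $\widetilde{BP}_f(T_n) = BP^{(0)}_f(T_n)\cup BP^{(1)}_f(T_n)$, under which $v_{max}(n)$ is identified with the youngest maximal degree vertex of $\widetilde{BP}_f(T_n)$, whose birth time is $\II^*_c(T_n)$ as in \eqref{maxbirth}. If $v_{max}(n)\in\{v_0,v_1\}$ then trivially $\operatorname{dist}(v_{max}(n),v_0)\le 1\le r_n$ once $n$ is large (recall $\KK(\tfrac{1}{\lambda^*}\log n)\to\infty$ since $\Phi_2(\infty)=\infty$), so one may assume $\II^*_c(T_n)>0$.

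Fix $\ep\in(0,\lambda^*/2)$ and set $s^*_n:=(\tfrac{\lambda^*}{2}+\ep)\KK(\tfrac{1}{\lambda^*}\log n)$. The key deterministic observation I would use is that any vertex born at time $s$ in $BP^{(i)}_f$ is at distance at most $H^{(i)}_s$ from $v_i$ (the root of that component), hence at distance at most $1+\max\{H^{(0)}_s,H^{(1)}_s\}$ from $v_0$, where $H^{(i)}_t$ denotes the height of $BP^{(i)}_f(t)$. By monotonicity of $t\mapsto H^{(i)}_t$, on the event $\{\II^*_c(T_n)\le s^*_n\}$ we get $\operatorname{dist}(v_{max}(n),v_0)\le 1+\max\{H^{(0)}_{s^*_n},H^{(1)}_{s^*_n}\}$, so that
$$
P\big(v_{max}(n)\notin B_n(v_0,r_n)\big)\le P\big(\II^*_c(T_n)>s^*_n\big)+\sum_{i=0}^{1}P\big(H^{(i)}_{s^*_n}\ge r_n-1\big).
$$
The first term is at most $C\exp(-C'\KK(\tfrac{1}{\lambda^*}\log n))$ directly by Lemma \ref{maxposition}. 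For the second term, since $\ep<\lambda^*/2$ and $\KK(\tfrac{1}{\lambda^*}\log n)\to\infty$, for all large $n$ we have $c_1 s^*_n\le c_1\lambda^*\KK(\tfrac{1}{\lambda^*}\log n)-1=r_n-1$; applying Lemma \ref{height} (with the value of $r$ yielding the constant $c_1$ used in $r_n$, i.e.\ $r=1$) to each of the i.i.d.\ copies $BP^{(i)}_f$ gives, for large $n$, $P(H^{(i)}_{s^*_n}\ge r_n-1)\le P(H^{(i)}_{s^*_n}\ge c_1 s^*_n)\le e^{-s^*_n}=\exp\big(-(\tfrac{\lambda^*}{2}+\ep)\KK(\tfrac{1}{\lambda^*}\log n)\big)$. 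Summing the contributions gives the claimed estimate for all large $n$, and for the finitely many remaining $n$ it holds after enlarging $C$ (there $\KK(\tfrac{1}{\lambda^*}\log n)$ is bounded and probabilities are at most $1$).

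The hard part, conceptually, is the single bridge from an age bound to a distance bound: one must make sure that $\II^*_c(T_n)$ under the embedding genuinely records the age of $v_{max}(n)$ --- in particular that the tie-breaking (youngest vertex) in the definition of $v_{max}(n)$ matches the $\sup$ in \eqref{maxbirth} --- and that controlling the heights of the two sub-branching-processes at the \emph{single} deterministic time $s^*_n$ is enough; here the monotonicity of the height process and the independence of $BP^{(0)}_f,BP^{(1)}_f$ do the work. Everything after that is routine bookkeeping with the exponential tails from Lemmas \ref{maxposition} and \ref{height}.
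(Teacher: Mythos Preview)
Your proposal is correct and follows essentially the same approach as the paper: embed into the CTBP, bound $P(\II^*_c(T_n)>s_n^*)$ via Lemma \ref{maxposition}, and bound the height at time $s_n^*$ via Lemma \ref{height}, then combine with a union bound. The paper chooses $\ep=\lambda^*/2$ (so $s_n^*=\lambda^*\KK(\tfrac{1}{\lambda^*}\log n)$) and applies Lemma \ref{height} directly to $\widetilde{BP}_f$, whereas you take $\ep<\lambda^*/2$ and split into the two independent sub-trees $BP^{(0)}_f,BP^{(1)}_f$ with the extra $+1$ for the edge $(v_0,v_1)$; these are cosmetic differences and your version is arguably a bit more careful about the two-tree structure.
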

\begin{proof}

Let $a_n=\exp\left( \frac{\lambda^{*2}}{2}\KK(\frac{1}{\lambda^*}\log n)\right)$, $t_n=\frac{1}{\lambda^*}\log a_n$ and $r_n=2c_1t_n$. Recall $\II^*_c(t)$ as defined in \eqref{maxbirth}. Using the continuous time embedding of the discrete tree process,
\begin{align*}
 P(v_{max}(n) \in B_n(v_0,r_n))&\geq P(H( \II^*_c(T_n) ) \leq r_n)\geq P( \II^*_c(T_n) \leq 2t_n, H(2t_n) \leq  r_n).
\end{align*}
Hence, 
\begin{equation}\label{np1}
P(v_{max}(n) \notin B_n(v_0,r_n))\leq P( \II^*_c(T_n) \geq 2t_n)+P( H_{2t_n} \geq  r_n).
\end{equation} 
By Lemma \ref{maxposition} with $\ep=\lambda^*/2$, there exists $C,C'>0$ so that
$$P( \II^*_c(T_n) \geq 2t_n)\leq C\exp(-C'\KK(\frac{1}{\lambda^*}\log n)).$$
Applying Lemma \ref{height} to the second term in the bound \eqref{np1},
$$P( H_{2t_n} \geq  r_n)= P(H_{2t_n}\geq 2c_1t_n)\leq e^{-t_n}\leq \exp\left(-\frac{\lambda^*}{2}\KK(\frac{1}{\lambda^*}\log n)\right).$$
The lemma follows on using the above two bounds in \eqref{np1}.
\end{proof}

The following lemma introduces a key functional $\tilde \alpha^*(\cdot)$ which arises in quantifying the exponential growth rate of the number of vertices within a certain radius of the root (see \eqref{growball}). We will require understanding of $\tilde \alpha^*(\cdot)$ near zero, as described in the next lemma.

\begin{lemma}\label{alphax}
Define $\tilde \alpha^*(x)=\inf_{0\leq \theta\leq \lambda^*} \{ x\log \hat{\mu}(\theta)+\theta\}$. Suppose $f(i)\leq C_0i^{\alpha}$ for some constant $C_0$ and $\alpha\in (0,1/2]$. For any $L>0$, there exists some constant $C>0$ depending on $\alpha, L$ so that 
\begin{equation}\label{al}
\tilde \alpha^*(x)\leq Cx-\frac{2}{1-\alpha}x\log x, \ \ x \in (0,L].
\end{equation}
\end{lemma}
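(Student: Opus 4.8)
The plan is to first pin down the blow‑up rate of the Laplace transform $\hat\mu$ at the origin, and then optimize the infimum defining $\tilde\alpha^*$. \textbf{Step 1 (behavior of $\hat\mu(\theta)$ as $\theta\downarrow 0$).} Since $t\mapsto t/(\theta+t)$ is increasing on $(0,\infty)$ and $f(i)\le C_0 i^\alpha$, the representation $\hat\mu(\theta)=\sum_{k\ge 1}\prod_{i=1}^k \frac{f(i)}{\theta+f(i)}$ gives the termwise bound $\hat\mu(\theta)\le \sum_{k\ge 1}\prod_{i=1}^k \frac{C_0 i^\alpha}{\theta+C_0 i^\alpha}$. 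Writing the product as $\exp\!\big(-\sum_{i=1}^k \log(1+\tfrac{\theta}{C_0 i^\alpha})\big)$, using $\log(1+u)\ge u/2$ for $u\in[0,1]$ (valid for every $i$ once $\theta\le C_0$), and the fact that $\sum_{i=1}^k i^{-\alpha}$ is of order $k^{1-\alpha}$ for $\alpha\in(0,1)$, I get that for $\theta\le C_0$ and all $k$ beyond an absolute constant, $\prod_{i=1}^k \frac{C_0 i^\alpha}{\theta+C_0 i^\alpha}\le \exp(-c\,\theta\,k^{1-\alpha})$ for some $c=c(\alpha,C_0)>0$, while the product is trivially $\le 1$. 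Comparing $\sum_{k\ge 1}e^{-c\theta k^{1-\alpha}}$ with $\int_0^\infty e^{-c\theta x^{1-\alpha}}\,dx=(c\theta)^{-1/(1-\alpha)}\int_0^\infty e^{-y^{1-\alpha}}\,dy$ then produces constants $\theta_0\in(0,\lambda^*]$ and $D=D(\alpha,C_0)>0$ with $\hat\mu(\theta)\le D\,\theta^{-1/(1-\alpha)}$ for $\theta\in(0,\theta_0]$, hence $\log\hat\mu(\theta)\le \log D-\tfrac{1}{1-\alpha}\log\theta$ on $(0,\theta_0]$.

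\textbf{Step 2 (optimizing near $x=0$).} For $x\le (1-\alpha)\theta_0$ the point $\theta=x/(1-\alpha)$ lies in $(0,\theta_0]\subseteq(0,\lambda^*]$ and is exactly the minimizer of $\theta\mapsto x\big(\log D-\tfrac{1}{1-\alpha}\log\theta\big)+\theta$; substituting it into the definition of $\tilde\alpha^*$ gives $\tilde\alpha^*(x)\le -\tfrac{1}{1-\alpha}x\log x + C_1 x$ for some $C_1=C_1(\alpha,C_0)$. Since $x\log x\le 0$ for $x\le 1$, this is at most $-\tfrac{2}{1-\alpha}x\log x+C_1 x$, which is \eqref{al} on $(0,x_1]$ with $x_1:=\min\{1,(1-\alpha)\theta_0\}$.

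\textbf{Step 3 (compact range and conclusion).} On $[x_1,L]$ I take $\theta=\lambda^*$ in the infimum; since $\hat\mu(\lambda^*)=1$ by \eqref{malthusian} this yields $\tilde\alpha^*(x)\le\lambda^*$. It then remains only to enlarge $C$ so that $Cx-\tfrac{2}{1-\alpha}x\log x\ge\lambda^*$ throughout $[x_1,L]$, which holds once $C\ge \lambda^*/x_1+\tfrac{2}{1-\alpha}(\log L)_+$, because the left side equals $x\big(C-\tfrac{2}{1-\alpha}\log x\big)$ and is $\ge x_1\big(C-\tfrac{2}{1-\alpha}(\log L)_+\big)$ on this interval. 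Taking $C$ to be the larger of $C_1$ and this quantity proves \eqref{al} on all of $(0,L]$.

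The only substantive point is Step 1: showing $\hat\mu(\theta)$ grows like $\theta^{-1/(1-\alpha)}$, and no faster, as $\theta\downarrow 0$, which rests on $\sum_{i\le k} i^{-\alpha}$ being of order $k^{1-\alpha}$ for $\alpha<1$ and then a Gaussian‑type tail sum. The factor $\tfrac{2}{1-\alpha}$ in the statement — as opposed to the sharp $\tfrac{1}{1-\alpha}$ that the argument actually delivers on $(0,1)$ — is precisely the slack that makes the transition at $x=1$ and the compact‑interval bound in Steps 2--3 entirely routine.
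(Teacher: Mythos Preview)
Your proof is correct and follows the same overall architecture as the paper's: bound $\hat\mu(\theta)$ from above by a sum of the form $\sum_k e^{-c\theta k^{1-\alpha}}$, extract a power-law blow-up in $\theta$, then plug a near-optimal $\theta$ into the infimum and handle the remaining compact interval separately.

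The one genuine difference is in how you estimate the tail sum. You compare $\sum_{k\ge 1} e^{-c\theta k^{1-\alpha}}$ with the integral $\int_0^\infty e^{-c\theta x^{1-\alpha}}\,dx$ and obtain the sharp rate $\hat\mu(\theta)\le D\,\theta^{-1/(1-\alpha)}$; the paper instead splits the sum at $k=\theta^{-2/(1-\alpha)}$, bounds the first block trivially by its length, and ends up with the cruder $\hat\mu(\theta)\le C_2\,\theta^{-2/(1-\alpha)}e^{-C_1\theta}$. Consequently the paper's optimization lands exactly on the coefficient $\tfrac{2}{1-\alpha}$, whereas your argument actually proves the stronger bound $\tilde\alpha^*(x)\le C_1 x-\tfrac{1}{1-\alpha}x\log x$ on $(0,x_1]$ and then uses the sign of $x\log x$ on $(0,1]$ to relax to the stated $\tfrac{2}{1-\alpha}$. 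Your Step~3 is also more explicit than the paper's one-line remark: you invoke $\hat\mu(\lambda^*)=1$ to get $\tilde\alpha^*(x)\le\lambda^*$ on $[x_1,L]$ and exhibit a concrete choice of $C$, while the paper simply asserts that $C$ can be enlarged. Both routes are entirely valid; yours is a bit sharper and cleaner at the cost of needing the integral comparison rather than a bare splitting.
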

\begin{proof}
All constants in the proof will depend on $\alpha, \lambda^*, f_*$ but not $\theta$. For $x \in (0,1)$, since $\log (1-x)\leq -x$, 
\begin{align*}
\hat{\mu}(\theta)&=\sum_{k=1}^\infty \prod_{i=1}^{k} \frac{f(i)}{\theta+f(i)}= \sum_{k=1}^\infty \exp\left(\sum_{i=1}^{k} \log \left(1-\frac{\theta}{\theta+f(i)}\right)\right)\leq \sum_{k=1}^\infty \exp\left (- \sum_{i=1}^{k} \frac{\theta}{\theta+f(i)} \right).
\end{align*}
Since we are considering $\theta\in [0,\lambda^*]$, $\theta+f(i)\leq \frac{\lambda^*}{f_*}  f(i)+ f(i)=\frac{f_*+\lambda^*}{f_*}f(i)$. It follows that there exists a positive constant $C_1$ such that
\begin{align}\label{lbo}
\hat{\mu}(\theta)&\leq  \sum_{k=1}^\infty  \exp\left(-\frac{\theta f_*}{f_*+\lambda^*} \sum_{i=1}^k \frac{1}{f(i)}\right)\leq \sum_{k=1}^\infty  \exp(-C_1\theta k^{1-\alpha}).
\end{align}
Observe that there exists some constant $C'>0$ so that $$\exp(-C_1\theta k^{1-\alpha})\leq \exp(-C_1k^{(1-\alpha)/2}) \le \frac{C'}{k^2}$$ for all $k> \theta^{-\frac{2}{1-\alpha}}$. Hence,
\begin{align*}
\hat{\mu}(\theta)&\leq \sum_{k=1}^{\lfloor\theta^{-\frac{2}{1-\alpha}}\rfloor} \exp(-C_1\theta)+ \sum_{k>\theta^{-\frac{2}{1-\alpha}}}\exp(-C_1\theta k^{1-\alpha})\\
&\leq \theta^{-\frac{2}{1-\alpha}} \exp(-C_1\theta)+\sum_{k> \theta^{-\frac{2}{1-\alpha}}} \frac{C'}{k^2}\\
&\leq  \theta^{-\frac{2}{1-\alpha}} \exp(-C_1\theta)+C'' \theta^{\frac{2}{1-\alpha}}\leq C_2 \theta^{-\frac{2}{1-\alpha}} \exp(-C_1\theta),
\end{align*}
where $C_2 = 1 + C'' (\lambda^*)^{4/(1-\alpha)}e^{C_1\lambda^*}$. Therefore, 
\begin{align*}
\inf_{0\leq \theta\leq \lambda^*} \{ x\log \hat{\mu}(\theta)+\theta\}&\leq \inf_{0\leq \theta\leq \lambda^*} \bigg\{ x \log \left( C_2e^{-C_1\theta} \cdot \frac{1}{\theta^{\frac{2}{1-\alpha}}}\right)+\theta\bigg\}\\
&= \inf_{0\leq \theta\leq \lambda^*} \bigg\{ x\left(\log C_2-C_1\theta-\frac{2}{1-\alpha}\log \theta\right)+\theta\bigg\}.
\end{align*}
Let $h(\theta)=(1-C_1x)\theta-\frac{2x}{1-\alpha}\log \theta$. The minimum for the function $h$ is achieved at $\hat{\theta}=\frac{2x}{(1-\alpha)(1-C_1x)}$. If $x \in [0, \frac{1}{2C_1} \wedge \frac{(1-\alpha)\lambda^*}{4}]$, $\hat{\theta} \in [0, \lambda^*]$. Hence, for $x \in (0, \frac{1}{2C_1} \wedge \frac{(1-\alpha)\lambda^*}{4} \wedge 1)$,
\begin{align*}
\tilde \alpha^*(x)&\leq x\log C_2+ \frac{2x}{1-\alpha}-\frac{2x}{1-\alpha}\log \left(\frac{2x}{(1-\alpha)(1-C_1x)}\right)\\
&\leq \left(\log C_2+\frac{2}{1-\alpha}(1-\log 2)\right)x-\frac{2}{1-\alpha}\cdot x\log x\\
&= C_3x-\frac{2}{1-\alpha}x\log x,
\end{align*}
where $C_3 = \log C_2+\frac{2}{1-\alpha}(1-\log 2)$. This proves \eqref{al} for $x \in (0, \frac{1}{2C_1} \wedge \frac{(1-\alpha)\lambda^*}{4} \wedge 1)$. The constant $C$ in \eqref{al} can be chosen suitably large so that \eqref{al} continues to hold for $x \in [\frac{1}{2C_1} \wedge \frac{(1-\alpha)\lambda^*}{4}\wedge 1,L]$. This proves the lemma.
\end{proof}

The next lemma quantifies tail probabilities for the distribution of the number of vertices in a ball of radius $r_n$ around the root.
In the following, $|B(v_0,R_n)|$ denotes the number of vertices in $B(v_0,R_n)$.
\begin{lemma}\label{profile}
Suppose $f(i)\leq C_0i^{\alpha}$ for some constant $C_0$ and $\alpha\in (0,1/2]$. Let the sequence $\{R_n\}_{n\geq 1}$ be given by $R_n=2r_n$ where $r_n$ is defined in Lemma \ref{vmaxerr}. Let  $b_n:=\exp\left( \frac{4}{1-\alpha}R_n \log\left( \frac{\log n}{\lambda^*R_n}\right) \right)$. Then there exists $C>0$ such that for all $n\geq 1$,
$$P(|B_n(v_0,R_n)|\geq b_n)\leq C\exp\left(-\frac{f_*}{2}\KK(\frac{1}{\lambda^*}\log n)\right).$$
\end{lemma}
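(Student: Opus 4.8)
The plan is to pass to the continuous time embedding, bound the expected size of a bounded-radius ball around the root in the branching process by a first-moment computation, and conclude via Markov's inequality together with a high-probability control on $T_n$.

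\textbf{Reduction to the branching process.} Recall the embedding $\widetilde{BP}_f(t)=BP^{(0)}_f(t)\cup BP^{(1)}_f(t)$ of Lemma~\ref{ctbpem}, with the edge $v_0-v_1$ added and $v_0$ as root; here $BP^{(0)}_f,BP^{(1)}_f$ are i.i.d.\ copies of $BP_f$. A vertex of $BP^{(1)}_f$ at tree-distance $d$ from $v_1$ is at graph-distance $d+1$ from $v_0$ in $\widetilde{BP}_f$, so $|B_n(v_0,R)|$ is equal in distribution to $\mathcal{B}^{(0)}_R(T_n)+\mathcal{B}^{(1)}_{R-1}(T_n)$, where $\mathcal{B}^{(i)}_R(t)$ is the number of individuals of $BP^{(i)}_f$ in generations $0,1,\dots,R$ present at time $t$; note $t\mapsto\mathcal{B}^{(i)}_R(t)$ is non-decreasing. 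Set $\gamma_n:=C''\KK(\tfrac1{\lambda^*}\log n)$ with $C''$ a large constant to be fixed, and $t_n^+:=\tfrac1{\lambda^*}\log n+\gamma_n$. By \eqref{ktozero}, $\gamma_n=o(\log n)$, so Lemma~\ref{Tbounds}(i) gives $P(T_n>t_n^+)\le C\exp(-aC''\lambda^*\KK(\tfrac1{\lambda^*}\log n))$ for any fixed $a\in(0,f(1)/\lambda^*)$. On $\{T_n\le t_n^+\}$ we have $|B_n(v_0,R_n)|\le\mathcal{B}^{(0)}_{R_n}(t_n^+)+\mathcal{B}^{(1)}_{R_n-1}(t_n^+)$, so it suffices to bound $P(\mathcal{B}_{R_n}(t_n^+)\ge b_n/2)$, where $\mathcal{B}_R(t)$ denotes the generation-$\le R$ population of a single copy of $BP_f$ at time $t$.

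\textbf{First-moment bound.} Iterating the branching property, the expected number of generation-$k$ individuals present at time $t$ equals $\mu^{*k}([0,t])$, the $k$-fold convolution of the reproduction intensity $\mu$ evaluated on $[0,t]$. Since $\widehat{\mu^{*k}}(\theta)=\hat\mu(\theta)^k$ and $\mu^{*k}([0,t])\le\int e^{\theta(t-s)}\mu^{*k}(ds)=e^{\theta t}\hat\mu(\theta)^k$ for every $\theta$ with $\hat\mu(\theta)<\infty$ (which, for the sublinear $f$ considered here, holds for all $\theta>0$), and since $\hat\mu(\theta)\ge\hat\mu(\lambda^*)=1$ for $0<\theta\le\lambda^*$,
$$
E\,\mathcal{B}_R(t)=\sum_{k=0}^{R}\mu^{*k}([0,t])\le\sum_{k=0}^{R}e^{\theta t}\hat\mu(\theta)^k\le(R+1)\exp\!\big(\theta t+R\log\hat\mu(\theta)\big),\qquad 0<\theta\le\lambda^*.
$$
Optimizing over $\theta\in(0,\lambda^*]$ gives $E\,\mathcal{B}_R(t)\le(R+1)\exp\!\big(t\,\tilde\alpha^*(R/t)\big)$, with $\tilde\alpha^*$ as in Lemma~\ref{alphax}.

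\textbf{Conclusion.} Apply this with $t=t_n^+$ and $R=R_n=2c_1\lambda^*\KK(\tfrac1{\lambda^*}\log n)$, and put $x_n:=R_n/t_n^+$. Since $R_n=o(\log n)$ by \eqref{ktozero} while $t_n^+\sim\tfrac1{\lambda^*}\log n$, we have $x_n\to0$; in particular $x_n\le1$ for large $n$ and $t_n^+/R_n=(1+o(1))\frac{\log n}{\lambda^* R_n}\to\infty$. Lemma~\ref{alphax} (with $L=1$) yields $\tilde\alpha^*(x_n)\le Cx_n+\tfrac{2}{1-\alpha}x_n\log(1/x_n)$, hence $t_n^+\tilde\alpha^*(x_n)\le CR_n+\tfrac{2}{1-\alpha}R_n\log(t_n^+/R_n)\le\tfrac{2}{1-\alpha}R_n\log\!\big(\tfrac{\log n}{\lambda^* R_n}\big)+C'R_n$. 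Since $\log b_n=\tfrac{4}{1-\alpha}R_n\log\!\big(\tfrac{\log n}{\lambda^* R_n}\big)$ and $\log\!\big(\tfrac{\log n}{\lambda^* R_n}\big)\to\infty$, this shows $\log b_n-t_n^+\tilde\alpha^*(x_n)\ge\tfrac1{1-\alpha}R_n\log\!\big(\tfrac{\log n}{\lambda^* R_n}\big)$ for large $n$, which, as $R_n=2c_1\lambda^*\KK(\tfrac1{\lambda^*}\log n)$ and the logarithm diverges, dominates $\tfrac{f_*}{2}\KK(\tfrac1{\lambda^*}\log n)+\log(4(R_n+1))$ for large $n$. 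Markov's inequality then gives $P(\mathcal{B}_{R_n}(t_n^+)\ge b_n/2)\le 2(R_n+1)\exp\!\big(t_n^+\tilde\alpha^*(x_n)\big)/b_n\le\exp\!\big(-\tfrac{f_*}{2}\KK(\tfrac1{\lambda^*}\log n)\big)$ for large $n$; combining with the bound on $P(T_n>t_n^+)$, choosing $C''$ so that $aC''\lambda^*\ge f_*/2$, and enlarging $C$ to absorb the finitely many remaining $n$ (where the probability is trivially at most $1$) completes the proof.

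\textbf{Main obstacle.} The first-moment identity is routine once the ball is replaced by the generation-$\le R$ population; the real work is the asymptotic bookkeeping in the last step: selecting the slack $\gamma_n$ in $t_n^+$, checking $x_n\to0$ via \eqref{ktozero}, and verifying that the gap $\log b_n-t_n^+\tilde\alpha^*(x_n)$ genuinely exceeds $\tfrac{f_*}{2}\KK(\tfrac1{\lambda^*}\log n)$ together with the $\log(R_n+1)$ correction coming from Markov. This is precisely where the exact shape of $b_n$ and the near-zero estimate of Lemma~\ref{alphax} must be used in tandem.
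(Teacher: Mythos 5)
Your proof is correct and follows essentially the same route as the paper: the continuous-time embedding, a Laplace-transform bound on the expected generation-truncated population giving $\exp\bigl(t\,\tilde\alpha^*(R/t)\bigr)$, Markov's inequality combined with Lemma \ref{Tbounds} for the $T_n$ tail, and Lemma \ref{alphax} for the behavior of $\tilde\alpha^*$ near zero. The only notable (and harmless) deviation is bounding the geometric sum by $(R+1)$ times its largest term, which sidesteps the paper's argument that the prefactor $\hat\mu(\theta_{x_n})/(\hat\mu(\theta_{x_n})-1)$ stays bounded as $x_n \to 0$.
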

\begin{proof}
Recall the embedding of $\{\GG_l\}_{l \ge 1}$ into a continuous time branching process $\widetilde{BP}_f(\cdot)$ described in Lemma \ref{ctbpem}. Let $F(t):=E[\xi[0,t]]$ and, for $n \ge 0$, let $Z_n(t)$ denote the number of vertices in generation $n$ at time $t$ that are descendants of $v_0$. Equation (1.1) in \cite{Chernoff} gives for $n \ge 0$,
$$F^{n*}(t)=E[Z_n(t)],$$
where $F^{n*}$ is the $n$-fold Stieltjes convolution of the increasing function $F$. For $n=0$ we let $F^{0*}(t)\equiv 1$. It follows from Theorem 1 in \cite{Kingman} that for all $\theta >\underline{\lambda}$,
$$\int_0^\infty e^{-\theta t}dE[Z_n(t)]=\hat\mu(\theta)^n,$$
which gives for any $t >0$, $n \ge 1$,
$$e^{-\theta t}F^{n*}(t)\leq \int_0^t e^{-\theta s}dE[Z_n(s)] \le \hat\mu(\theta)^n.$$
Let $\breve \sigma_n=\frac{1}{\lambda^*}\log n+\gamma_n$ where $\gamma_n=o(\log n)$ and will be specified later. Define $x_n=R_n/\breve \sigma_n$. By \eqref{ktozero}, $x_n \rightarrow 0$ as $n \rightarrow \infty$. By \eqref{lbo}, $\underline{\lambda} :=\inf\{ \lambda>0: \hat{\mu}(\lambda)<\infty\} = 0$. For any $\theta>0$, writing $Z_{\le n}(t)$ for the number of vertices in generation at most $n$ at time $t$ that are descendants of $v_0$,
\beq\label{Znub}
EZ_{\leq \lfloor R_n\rfloor}(\breve \sigma_n) = \sum_{k=0}^{\lfloor R_n \rfloor} F^{k*}(\breve \sigma_n) \leq \sum_{k=0}^{\lfloor R_n \rfloor} e^{\theta \breve \sigma_n} \hat\mu(\theta)^k=e^{\theta \breve\sigma_n} \cdot \frac{\hat\mu(\theta)^{\lfloor R_n\rfloor+1}-1}{\hat\mu(\theta)-1}.
\eeq

Let $\beta:=-\hat\mu'(\lambda^*)$. By the representation (5.40) of \cite{HJarxiv}, $\beta>0$. It is not hard to show that for $x\in (0, \beta^{-1})$,  
$$\theta_x:=\operatorname{arg \ inf}_{0 \le \theta \le \lambda^*}\{ \theta+x \log \hat \mu(\theta)\}\in (0,\lambda^*),$$ 
see the proof of (13.133) in \cite{HJarxiv}. Hence, for $x\in (0,\beta^{-1})$, $\hat\mu(\theta_x)>1$. Obtain $n_0 \in \mathbb{N}$ such that $x_n < \beta^{-1}$ for all $n \ge n_0$. For $n \ge n_0$, taking $\theta=\theta_{x_n}$ in \eqref{Znub} gives
\begin{align}\label{growball}
EZ_{\leq \lfloor R_n \rfloor}(\breve\sigma_n)&\leq e^{\theta_{x_n} \breve\sigma_n}\cdot \frac{\hat\mu(\theta_{x_n})^{ \lfloor R_n \rfloor+1}-1}{\hat\mu(\theta_{x_n})-1}  \leq \frac{\hat\mu(\theta_{x_n})}{\hat\mu(\theta_{x_n})-1} \exp\left(\breve\sigma_n (\theta_{x_n} + x_n \log \hat \mu(\theta_{x_n})\})\right)\nonumber\\
&=c(x_n)\exp(  \tilde\alpha^*(x_n)\breve\sigma_n),
\end{align}
where $c(x):=\frac{\hat\mu(\theta_{x})}{\hat\mu(\theta_{x})-1}$. Note that $\theta_{x_n}\to 0$ as $x_n\rightarrow 0$. Otherwise, we could obtain a subsequence $\{n_j\}$ such that $\liminf_j\theta_{x_{n_j}}>0$. Then, 
$$\liminf_j \tilde\alpha^*(x_{n_j}) = \liminf_j  [\theta_{x_{n_j}} +x \log \hat \mu(\theta_{x_{n_j}})] \ge \liminf_j  [\theta_{x_{n_j}} +x \log \hat \mu(\lambda^*)] = \liminf_j\theta_{x_{n_j}}>0,$$
which is a contradiction to \eqref{al}.
Thus, $\hat\mu(\theta_{x_n})\to \infty$ as $x_n\to 0$, which gives $\lim_{x_n\rightarrow 0} c(x_n)=1$. That is, $C_0:=\sup\{ c(x_n): n \in \mathbb{N}\}<\infty$, which gives
$$EZ_{\leq \lfloor R_n \rfloor}(\breve\sigma_n)\leq C_0\exp(  \tilde\alpha^*(x_n)\breve\sigma_n).$$
Therefore,  
\begin{align*}
P(Z_{\leq \lfloor R_n \rfloor}(T_n) \geq  b_n/2)&\leq P(Z_{\leq \lfloor R_n \rfloor}(T_n)\geq  b_n/2, T_n\leq \breve\sigma_n)+P(T_n> \breve\sigma_n)\\
&\leq P( Z_{\leq \lfloor R_n \rfloor}(\breve\sigma_n)\geq  b_n/2)+P(T_n> \breve\sigma_n)\\
&\leq \frac{2C_0\exp(  \tilde\alpha^*(x_n) \breve\sigma_n)}{b_n}+P(T_n> \breve\sigma_n).
\end{align*}
Taking $\gamma_n=\KK(\frac{1}{\lambda^*}\log n)$, Lemma \ref{Tbounds} (i) with $a= f_*/(2\lambda^*)$ gives an upper bound on the second term,
$$P(T_n> \breve\sigma_n)\leq C  e^{-\frac{f_*}{2} \gamma_n}=C \exp\left(-\frac{f_*}{2}\KK(\frac{1}{\lambda^*}\log n)\right).$$
As $\lim_{l \rightarrow \infty} x_l =0$, therefore, $\sup_lx_l < \infty$. Thus, it follows from Lemma \ref{alphax} upon taking $L= \sup_lx_l$ that there exists some constant $C'>0$ so that for all $n\geq 1$,
\begin{align*}
\exp( \tilde\alpha^*(x_n) \breve\sigma_n)&\leq \exp\left( CR_n+\frac{2}{1-\alpha}R_n\log\left(\frac{\breve\sigma_n}{R_n}\right)\right)\\
&\leq C'\exp\left( \frac{3}{1-\alpha}R_n \log\left( \frac{\log n}{\lambda^*R_n}\right) \right).
\end{align*}
Hence, recalling $b_n=\exp\left( \frac{4}{1-\alpha}R_n \log\left( \frac{\log n}{\lambda^*R_n}\right) \right)$,
\begin{align*}
P(Z_{\leq \lfloor R_n \rfloor}(T_n)\geq  b_n/2)&\leq C \exp\left(-\frac{f_*}{2}\KK(\frac{1}{\lambda^*}\log n)\right)+ 2C_0C' \exp\left( -\frac{1}{1-\alpha}R_n \log\left( \frac{\log n}{\lambda^*R_n}\right) \right)\\
&\leq C''\exp\left(-\frac{f_*}{2}\KK(\frac{1}{\lambda^*}\log n)\right).
\end{align*}
Let $\bar Z_{\leq \lfloor R_n \rfloor}(t)$ denote the number of descendants of $v_1$ by time $t$ that are within generation $\lfloor R_n \rfloor$. It is easy to see $\bar Z_{\leq \lfloor R_n \rfloor}(t)$ has the same law as $Z_{\leq \lfloor R_n \rfloor}(t)$.
The lemma now follows upon noting 
$$P(|B_n(v_0,R_n)|\geq b_n) \le P(Z_{\leq \lfloor R_n \rfloor}(T_n)+\bar Z_{\leq \lfloor R_n \rfloor}(T_n)\geq b_n)\leq 2P(Z_{\leq \lfloor R_n \rfloor}(T_n)\geq b_n/2).$$
\end{proof}

\mn
\textit{Proof of Theorem \ref{nonpersistent}.}
Define the event $\EE^{(n)} := \{|B_n(v_{max}(n),r_n)| \le b_n, v_0 \in B_n(v_{max}(n),r_n)\}$. Then
\begin{align*}
P\left(\EE^{(n)}\right)
&= P\left(|B_n(v_{max}(n),r_n)| \le b_n, v_{max}(n) \in B_n(v_0,r_n)\right)\\
&\geq  P(|B_n(v_0,2r_n)| \leq b_n, v_{max}(n)\in B_n(v_0,r_n)).
\end{align*}
It follows from Lemma \ref{profile} and Lemma \ref{vmaxerr} that there are $a,C_1,C_2,C_3, C, C'>0$, so that 
\begin{align*}
P((\EE^{(n)})^c)&\leq P(|B_n(v_0,2r_n)| \geq b_n)+P( v_{max}(n) \notin B_n(v_0,r_n)))\\
&\leq C_1\exp\left(-\frac{f_*}{2}\KK(\frac{1}{\lambda^*}\log n)\right)+ C_2\exp(-C_3\KK(\frac{1}{\lambda^*}\log n))\\
&\leq C\exp\left( -C'\KK(\frac{1}{\lambda^*}\log n)\right).
\end{align*}
For any $\ep \in (0,1)$, the above bound is $\le \ep$ when $n\geq \exp\left( \lambda^* \KK^{-1}\left(\frac{\log(C/\ep)}{C'}\right)\right)$.
This proves the theorem.
\qed

\bibliographystyle{alpha}
\bibliography{persistence,pref_change_bib,scaling}

\begin{acks}
SB was supported in part by the NSF-CAREER award (DMS-2141621) and the NSF RTG award (DMS-2134107).
We are grateful to Shankar Bhamidi for continued discussions throughout the project. 
We thank Pratima Hebbar for very helpful advice.
We also thank an anonymous referee and an associate editor whose suggestions substantially improved this article.
\end{acks}


\end{document}